\documentclass[11pt]{article}
\usepackage[T1]{fontenc}
\usepackage[utf8]{inputenc}

\usepackage{latexsym}
\usepackage{amsmath}
\usepackage{amsfonts}
\usepackage{amssymb}
\usepackage{graphicx}
\usepackage{enumerate}
\usepackage[shortlabels]{enumitem}
\usepackage{bbm}
\usepackage{bm}
\usepackage{todonotes}
\usepackage{epigraph}
\usepackage{empheq}
\usepackage{newunicodechar}
\newunicodechar{​}{} % paste the zero-width space itself between the braces
\usepackage[colorlinks,linkcolor=blue,anchorcolor=blue,citecolor=blue,backref=page]{hyperref}
\usepackage[colorlinks,linkcolor=blue,anchorcolor=blue,citecolor=blue]{hyperref}
\renewcommand*{\backrefalt}[4]{%
	\ifcase #1%
	(Not cited.)%
	\or%
	(Cited on p.~#2)%
 \else%
	(Cited on pp.~#2)%
	\fi%
}

\newenvironment{proof}{{\bf Proof:  }}{\hfill\rule{2mm}{2mm}}

\newtheorem{thm}{Theorem}[section]
\newtheorem{cor}[thm]{Corollary}
\newtheorem{defn}[thm]{Definition}

\newtheorem{lem}[thm]{Lemma}
\newtheorem{prop}[thm]{Proposition}
\newtheorem{rem}[thm]{Remark}
\newtheorem{conj}[thm]{Conjecture}
\newtheorem{eqc}{Conjecture}
\newtheorem{que}[thm]{Question}
\newtheorem{quec}{Question.}

\newcommand{\ds} {\displaystyle}

\newcommand{\T}[0]{\mathbb{T}}
\newcommand{\C}[0]{\mathbb{C}}
\newcommand{\N}[0]{\mathbb{N}}

\newcommand{\Z}[0]{\mathbb{Z}}
\newcommand{\p}[0]{\mathbb{P}}
\newcommand{\1}[0]{\mathbbm{1}}
\newcommand{\K}[0]{\mathbb{K}}

\newcommand{\B}[0]{\mathcal{B}}
\newcommand{\Cc}[0]{\mathcal{C}}

\newcommand{\eE}[0]{\mathbb{E}}
\newcommand{\F}[0]{\mathcal{F}}

\newcommand{\M}{\mathcal{M}}
\newcommand{\A}{\mathcal{A}}
\newcommand{\pr}{\mathcal{P}}
\newcommand{\bmu}[0]{\bm \mu}

\newcommand{\bml}[0]{\bm \lambda}

\newcommand{\qid}[0]{\enspace\lower6pt\vbox{\hrule height2.6pt depth-
		2pt\hbox{\vrule
			width0.6pt\kern1pt\vbox{\kern1pt\phantom{j}\kern1pt}\kern1.5pt
			\vrule width0.6pt}\hrule height2.6pt depth-2pt}}

\newcommand{\equidef}{\stackrel {\rm {def}}{=}}

\def\build#1_#2^#3{\mathrel{\mathop{\kern 0pt#1}\limits_{#2}^{#3}}}
\def\tend#1#2#3{\build\hbox to 12mm{\rightarrowfill}_{#1\rightarrow #2}^{ #3}}
\newcommand{\setdef}{\stackrel {\rm {def}}{=}}
\title{On the spectral analysis of dynamical Möbius-Sarnak process and topological entropy of Möbius fonction}

\author{{\small {\MakeLowercase{el}} Houcein {\MakeLowercase{el}} Abdalaoui} \\
	{\small University of Rouen Normandy, LMRS, CNRS, UMR 6085 F-76000, Rouen, France} \\ }
\date{\today}

\begin{document}
	\maketitle
	\renewcommand{\theequation}{\thesection.\arabic{equation}}
	\numberwithin{equation}{section}
	\begin{center}
		{\textbf{Dedicated to the memory of François Parreau$^{*}$, Jean-François Méla$^{**}$ and William Veech$^{***}$\footnote{François Parreau (1948-2025), Jean-François Méla (1939-2025) and Bill Veech (1938-2016).}}}
	\end{center}
	\begin{center}
		\begin{minipage}{0.45\textwidth}
			\epigraph{The purpose of life is to conjecture and prove.}
			{\textit{P\'{a}l Erd\H{o}s}}
			\epigraph{It is better to solve one problem five different ways than to solve five problems one way.}
			{George Pólya}
		\end{minipage}
		\hfill
		\begin{minipage}{0.45\textwidth}
			\epigraph{Act without expectation.}
			{Tao Tzu}
			\epigraph{T is common proof. That lowliness is young ambition's ladder.}
			{Shakespeare}
			%\hfill
			\epigraph{Nothing is forever in this world, not even our problems.}
			{\textit{Charlie Chaplin%
					\footnote{In the middle of difficulty lies opportunity. Albert Einstein }}}
		\end{minipage}
	\end{center}
	\medskip
	\begin{abstract}
		By extending the  Rokhlin-Sinai machinery relating the entropy and countable Lebesgue component in the spectrum, We establish that the dynamical Möbius-Sarnak process has a countable Lebesgue component. Inspired by recent work of M. Lin and the author, we extend the notion of spectral measure to all operators on Banach spaces. This generalization is further motivated by the Bellow-Losert extension of Wiener's notion of the spectral measure of sequences. Furthermore, we establish unconditionally that the topological entropy of the Möbius flow is given by $\frac{6}{\pi^2}\log 3$.
		Among other consequences, we recover a recent result by el Abdalaoui-Nerurkar which asserts that for any quasi-generic measure for the M\"{o}bius function, the M\"{o}bius flow equipped with this measure has a countable Lebesgue component in its spectrum. It follows that the Sarnak M\"{o}bius orthogonality conjecture holds for any topological dynamical system with singular spectrum. We further show that all the potential spectral measures of he M\"{o}bius function are absolutely continuous with respect to Lebesgue measure.  \\
		
		%\vskip 0.5cm
		{\setlength{\emergencystretch}{2em}}
		
		{\noindent {\bf Keywords}:
		 Topological entropy , Rokhlin-Sinai's machinery, $\K$-systems, singular spectrum, Chowla's conjecture, Chowla's conjecture, Sarnak's M\"{o}bius disjointness conjecture, Pinsker
		$\sigma$-algebra, countable Lebesgue component.
		}
		\noindent {\bf Mathematics Subject Classification (2020):}
		37A30,54H20,47A35, 11N64.
	\end{abstract}
	\vskip 0.5cm
	
	%\epigraph{The purpose of life is to conjecture and prove.} {\textit{ P\'{a}l   Erd\"{o}s}} \epigraph{Act without expectation. }{Lao Tzu}

	%\epigraph{Through error you come to the truth! }{Fyodor Dostoevsky}
	
	%\epigraph{Two things are infinite: the universe and human stupidity; and I'm not sure about the universe.} {\textit{ Albert Einstein \footnote{Even the paranoid has their ennemi. Henri Kissinger} }}

	\pagenumbering{arabic}

\section{Introduction}
This paper is motivated by the following theorem due to P. Sarnak \cite{Sarnak} and W. Veech \cite{Vn}.
\begin{thm}[Sarnak-Veech's theorem on M\"{o}bius flow \cite{Sarnak},\cite{Vn},\cite{AV}]\label{SV}
	There exists an arithmetical function $f~~\N \longrightarrow \{ \pm 1,0\}$ and a unique invariant measure $\eta_M$ on the topological dynamical system $(X_3,\A_3,S)$ raised by $f$ which is ergodic with the Pinsker $\sigma$-algebra 
	$$\Pi_{\eta_M}(S)=s^{-1}\big(\B(X_2)\big)\,$$ 
	where $S$ is the shift maps and $s$ is the square maps, that is, $S(x_n)=(x_{n+1})$ and $s(n)=(x_n^2) $ with $(X_2,\A_2)$ is the topological dynamical system generated by $f^2$. 
	Moreover, $\eE(\textrm{pr}_1|_{\Pi_{\eta_M}(S)}) = 0$.
\end{thm}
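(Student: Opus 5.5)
The natural choice is $f=\mu$, the Möbius function, so that $f^2=\mu^2$ is the indicator of the square-free integers. Let $X_3$ (resp. $X_2$) be the orbit closure of $\mu$ (resp. $\mu^2$) in $\{-1,0,1\}^{\N}$ (resp. $\{0,1\}^{\N}$) under the shift $S$. The square map $s$ is then a factor map from $(X_3,S)$ onto $(X_2,S)$. The measure $\eta_M$ will be the relatively independent (Bernoulli) extension of the Mirsky measure $\nu_{M}$ on $X_2$. Concretely, $\eta_M$ is the image of $\nu_M\otimes B(1/2,1/2)^{\otimes\N}$ under the map
$$(y,\omega)\mapsto (y_n\omega_n)_n,$$
so that, conditionally on the support $\{n: y_n=1\}$, the signs are i.i.d. fair coin flips. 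Here $\nu_M$ is the frequency measure of $\mu^2$; by Mirsky's theorem it exists, and it is the image of Haar measure on $\prod_p \Z/p^2\Z$ under the map $g\mapsto (\1[g+n\not\equiv 0 \bmod p^2\ \forall p])_n$. Hence $(X_2,\nu_M,S)$ is ergodic and has discrete spectrum, in particular zero entropy.

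\textbf{Invariance and ergodicity.} Shift invariance of $\eta_M$ is immediate from the product construction. For ergodicity I would view $(X_3,\eta_M,S)$ as a skew product over $(X_2,\nu_M)$ with fibre the Bernoulli shift on the positions where $y_n=1$. Since $\nu_M$-a.e. $y$ has positive density $6/\pi^2$ of ones, the fibre process is a Bernoulli shift with respect to the induced (return-time) transformation. Ergodicity follows because an invariant function must be measurable with respect to the fibre tail, which is trivial by Kolmogorov's $0$--$1$ law applied conditionally on $y$, combined with ergodicity of $\nu_M$. This identification also yields $\eE(\textrm{pr}_1\mid s^{-1}\B(X_2))=0$: given $y$, the coordinate $x_1=y_1\omega_1$ has conditional mean $y_1\,\eE(\omega_1)=0$.

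\textbf{Pinsker algebra.} The inclusion $s^{-1}\B(X_2)\subset\Pi_{\eta_M}(S)$ holds because the factor $(X_2,\nu_M)$ has zero entropy (discrete spectrum). For the reverse inclusion I would show that the extension $X_3\to X_2$ is relatively K. I would run the relative Rokhlin--Sinai argument with the generating partition $P=\{x_0=-1,0,1\}$. Conditionally on $s^{-1}\B(X_2)$, the past $\bigvee_{n<0}S^{-n}P$ and the future are independent given $y$, because the signs are independent. Consequently the relative tail $\bigcap_n \bigl(s^{-1}\B(X_2)\vee\bigvee_{k\ge n}S^{-k}P\bigr)$ reduces to $s^{-1}\B(X_2)$, again by a conditional Kolmogorov $0$--$1$ law. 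A relatively K extension of a zero-entropy system has Pinsker algebra equal to the base algebra, which gives equality. The relative entropy is $h(\eta_M)=\nu_M([1])\log 2=\frac{6}{\pi^2}\log 2>0$, which confirms that the extension is nontrivial.

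\textbf{Uniqueness.} Uniqueness is the main obstacle, and it has to be understood correctly. The class I would consider is the set of $S$-invariant measures on $X_3$ that project to $\nu_M$, are ergodic, and satisfy the stated Pinsker and conditional-expectation properties. One cannot hope for uniqueness among all invariant measures on $X_3$ unconditionally. Within this class, I would argue that maximal relative entropy $\frac{6}{\pi^2}\log 2$ over $\nu_M$ forces the conditional fibre measures to be uniform product measures. This is the equality case in the relative variational inequality $H(P\mid\cdot)\le \log 2$ on the support. I would therefore try first to prove that the Pinsker condition, together with the Sarnak--Veech setup where $\eta_M$ is defined as the unique measure of this relative-maximal type, pins down the measure. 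This follows the strategy of Sarnak's letter and Veech's argument.
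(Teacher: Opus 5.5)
\textbf{The gap is uniqueness.} Your last paragraph does not prove it. It adds a hypothesis that is not in the statement, namely maximal relative entropy over $\nu_M$. It then takes ``$\eta_M$ is defined as the unique measure of this relative-maximal type'' as part of the setup, which is circular. The version you say you would try first is false: ergodicity, $\Pi_\eta(S)=s^{-1}\B(X_2)$ and $\eE(\mathrm{pr}_1\mid\Pi_\eta(S))=0$ do not pin the measure down. To see this, in Lemma~\ref{Vr} replace $\p$ by the stationary symmetric Markov measure $\p'$ on $\Omega_0$ with flip probability $q\in(0,1)\setminus\{\frac12\}$.
\begin{itemize}
\item Since $m\times\p'$ is still $\psi_0$-invariant, $\eta'=\Phi_*(m\times\p')$ is $S$-invariant with $s_*\eta'=m$.
\item $\eE_{\eta'}(\mathrm{pr}_1\mid s^{-1}\B(X_2))=0$, because the stationary law of $\p'$ is uniform.
\item Your own conditional-tail argument, with tail triviality of a mixing Markov chain in place of Kolmogorov's law, gives ergodicity and $\Pi_{\eta'}(S)=s^{-1}\B(X_2)$.
\end{itemize}
Yet $\eta'(y_1=y_2=1,\ x_1=x_2)=(1-q)\,m(y_1=y_2=1)\neq\frac12\,m(y_1=y_2=1)=\eta_M(y_1=y_2=1,\ x_1=x_2)$, since $m(y_1=y_2=1)=\prod_p(1-2p^{-2})>0$. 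So ``unique'' must be given an explicit meaning and then proved.

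Your entropy idea does this if you carry it out. Let $\eta$ be any $S$-invariant measure on $X_{\A_3}$ with $s_*\eta=m$. Since $h_m(S)=0$, $y_1$ is a.s.\ a function of $(y_k)_{k\ge2}$. Hence $h_\eta(S)=H_\eta\bigl(x_1\mid s^{-1}\B(X_2)\vee\sigma(x_k:k\ge2)\bigr)\le m(y_1=1)\log 2=\frac{6}{\pi^2}\log 2$. Equality holds iff, on $\{y_1=1\}$, $x_1$ is a fair sign given $(y,x_2,x_3,\dots)$. By stationarity and $h_m(S)=0$ the same holds at every $n$. Successive conditioning then makes the law of $(x_1,\dots,x_N)$ given $y$ uniform on the signings of $\mathrm{supp}(y)\cap[1,N]$, i.e.\ $\eta=\eta_M$. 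Thus $\eta_M$ is the unique lift of $m$ of entropy $\frac{6}{\pi^2}\log 2$, consistent with Theorem~\ref{ARV}. That, not the Pinsker property, is what uniqueness can mean here.

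\textbf{Where the measure lives, and the comparison.} You take $X_3$ to be the orbit closure of $\bmu$. But for every block $w$ of $X_2$ and every signing $\tilde w$ of it, your measure gives $\eta_M([\tilde w])=2^{-|\mathrm{supp}(w)|}m([w])>0$, so $\mathrm{supp}(\eta_M)=s^{-1}(X_2)$. Regarding $\eta_M$ as a measure on $\overline{\{S^n\bmu\}}$ would require every such signed block to occur in $\bmu$. That is a Chowla-type statement your argument does not provide. Work instead on $X_{\A_3}=s^{-1}(X_{\A_2})$, the space of Lemma~\ref{Vr}, where $\Phi(X_{\A_2}\times\Omega_0)=X_{\A_3}$. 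With that change your existence half is sound.

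The paper itself gives no proof of this theorem; it is quoted from Sarnak, Veech and \cite{AV}. The only ingredient the paper records is Veech's representation, where $\Phi$ places $\omega_k$ at the $k$-th point of $\mathrm{supp}(x)$ and is an a.e.\ isomorphism with the skew product $\psi_0$. Your coordinatewise map $(y,\omega)\mapsto(y_n\omega_n)_n$ gives the same measure, since in both constructions the signs on $\mathrm{supp}(y)$ are conditionally i.i.d.\ fair. The difference is that yours realizes it as a non-injective factor of the direct product $S\times S$. This makes three things immediate: invariance, the vanishing conditional expectation, and the conditional $0$--$1$ law. The last follows from conditional independence plus martingale convergence on the remote future $\bigcap_n\sigma(x_k:k\ge n)$, which contains every zero-entropy set of the one-sided shift. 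Veech's skew product, with a genuine Bernoulli fibre, is instead what lets the paper compute $h_{\eta_M}(S)$ via Abramov--Rokhlin.
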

We recall that for a given measure-theoretical dynamical system $(X,\A,T,\mu)$, the Pinsker $\sigma$-algebra is the largest invariant $\sigma$-algebra such that the corresponding factor of the dynamical system has zero entropy.

\noindent{}Here, we will study the spectrum of the topological dynamical systems $(X,T)$ which satisfy 
\begin{enumerate} 	
	\item \label{top} $X$ is a compact and  $T$ is an homomorphism on it.
	\item \label{Veech-S}There exists a unique measure $\eta_M$ on some $\sigma$-algebra $\A$ of $X$  which is ergodic, and a factor map $s : X \to Y$ such that the factor dynamical system $(Y,\B,S, s^*(\eta_M))$ is the Kronecker factor of $(X,\A,T,\eta_M)$ where $m=s^*(\eta_M)$ is the push-forward measure of $\eta_M$ under $s$. 
	\item \label{GO}There exists a family of measurable functions $(f_n)_{n \geq 0}$ which generated the $\sigma$-algebra $\A$ and 
	$$\eE(f_1|_{\Pi_{\eta_M}(S)}) = 0, \textrm{with }~~ \Pi_{\eta_M}(S)=s^{-1}\big(\B\big)\,$$
	and $f_n=f_1 \circ T^n, n \in \Z.$ We further assume that $s^{-1}(\B)$ is proper non-atomic sub-$\sigma$-algebra.
	\item \label{M} Moreover, we suppose that there is a point $x \in X$ such that any quasi-generic probability measure $\eta$ of $x$ satisfy 
	$s^*(\eta)=m.$   %and a unique probability measure $m$ on the factor $Y$ 
	We recall that $\eta$ is quasi-generic probability measure if $\eta$ is weak-star limit of the sequence of probability measures given by
	$$\frac{1}{N}\sum_{n=1}^{N}\delta_{T^nx},$$
	where $\delta_{\bullet}(A)=\mathbbm{1}_A(\bullet)$ is the delta measure on the point $\bullet$, that is, $
	\eta \in \overline{\left\{ \frac{1}{N}\sum_{n=1}^{N} \delta_{T^n x} \right\}}^{W^*},$
	where $\overline{\Delta}^{W^*}$ denotes the weak-* closure of $\Delta \subset \mathcal{P}(X)$, the set of probability measures on $X$. The set of quasi-generic probability measures will be denoted by $\mathcal{I}(x)$.
	We may further assume, but it is not necessary here, that the dynamical system $(Y,S,m)$ has a discrete spectrum. 
	\item \label {P} Finally, we assume that for any $\eta \in \mathcal{I}(x)$,  $\Pi_{\eta} \supset \Pi_{\eta_M} $ where   $\Pi_{\bullet}$ is the Pinsker algebra of the measure $\bullet$.
\end{enumerate}
\noindent{}The  point $x$ is said to be a privilege point. For the Möbius flow, the measure $m$ is called Mirsky measure and $\eta_M$ the Sarnak-Veech measure.  Moreover, $m$ is generic for $s(\bmu)=\bmu^2$ by Mirsky theorem \cite{Mir}. \\

\noindent{} The property \ref{M} is satisfied by the Möbius flow by Sarnak's theorem \cite{Sarnak},\cite{Vn} which say that the characteristic function of the set of square-free natural numbers denoted by $\1_{Q}$ is admissible and satisfy  
$$m= \lim_{N \to \infty} \frac{1}{N}\sum_{n=0}^{N}\delta_{S^n{\1_Q}},$$
Where $m$ is the Mirsky measure and $S$ is on the shift on the space of admissible sequence in $\{0,1\}^{\Z}$.   

\noindent{}Following W. Veech \cite{Vs}, a process is given by the triplet $\mathcal{X}=(X,\A,T, \mu)$ where $T$ is a transformation measure-preserving on $X$ and $(X,\A, \mu)$ is a probabilistic space. The dynamical system satisfying \eqref{top}-\eqref{P} is said to be Möbius-Sarnak process.

The principal purpose of this paper is to establish that Möbius-Sarnak process has a countable Lebesgue component in its spectrum, and the topological entropy of $\bmu$ is $\frac{6}{\pi^2}\log 3$. For that, we will follows the path of Rokhlin-Sinai in their study of the spectrum of the Kolomogorov dynamical systems (called also $K$-systems), and we shall improved their procedural algorithm, to apply a large class of dynamical systems which include $K$-systems. As a consequence, we obtain an extension of their work to the class of Möbius-Sarnak process. Obviously, this later class include $K$-systems. We believe that the careful examination of our proof can be used to obtain its generalization to the case of relatively $K$-systems. As a consequence, we recover a recent result by el Abdalaoui-Nerurkar which asserting that for any quasi-generic measure for the M\"{o}bius function, the M\"{o}bius flow equipped with this measure has a countable Lebesgue component in its spectrum \cite{AM}.  

%For the computation the topological entropy of $\bmu$, we will use the Veech's representation of Möbius flow as skew product \cite{V-n}\footenote{see for instance the letter of W. Veech sent to the author in \cite{AV}, in which he declare that there is only 4 people in the world how has a copy of this notes including the author. } combined with Newton's theorem on the topological entropy of skew product.

%For the computation of the topological entropy of $\bmu$, we use Veech's representation of the Möbius flow as a skew product \cite{V-n}\footnote{For the proof see \cite{AM}. See also the letter of W. Veech sent to the author in \cite{AV}, in which he declares that there are only four people in the world who have a copy of these notes, including the author.} combined with Newton's theorem on the topological entropy of skew products

%To compute the topological entropy of $\bmu$, we rely on Veech’s ideas and for the computation of the measure-theortical entropy of the Möbius flow equipped with Sarnk-Veech's measure, we will appelle to the representation of the Möbius flow as a skew product \cite{V-n}.
To compute the topological entropy of $\bmu$, we follow Veech's approach. For the measure-theoretic entropy of the Möbius flow equipped with the Sarnak–Veech measure, we appeal to its representation as a skew product \cite{Vn}
\footnote{For the proof, see \cite{AM}. Cf. also the correspondence from W. Veech in \cite{AV}, noting that the circulation of these notes was restricted to only four people in the world, including the author. Let us add that some authors, inspired by our work in \cite{AV}, stated a connection similar to Veech's between Sarnak's conjecture and the property that the projection is the orthogonal complement of the $L^2$-space of the Pinsker algebra. However, we warn the reader that such a statement cannot be true for an arithmetical sequence that is not in the class of the Möbius function. Let us also add that our paper was not mentioned there, but such practices sometimes happen in the literature.}    . This is used in conjunction with Abramov-Rokhlin Formula’s on the topological entropy of skew products \cite{AR}(see also \cite[254-257]{P}). We warn the reader that we will use exactly the extension of this formula to the case of non-invertible case due to T. Bogensch\"{u}tz and
H. Crauel \cite{BHc}.

The paper is organized as follows. In Section \ref{Tools}, we recall the basic definitions, notation, and tools from dynamical systems and spectral theory. We also review the notion of entropy and the main results of Rokhlin–Sinai theory. The section concludes with Subsection \ref{Spec}, where we extend spectral theory to bounded operators on Banach spaces. In Section \ref{Prin}, we state and prove our first main result. In Section \ref{Appli}, we present several consequences and applications to number theory. Finally, in Section~\ref{TopoM}, we unconditionally determine the topological entropy of the Möbius flow and prove that it is equal to $\frac{6}{\pi^2}\log(3).$ We further establish that the entropy of Sarnak-Veech's measure is $\frac{6}{\pi^2}\log(2).$ Finally, building on Veech's construction of the Möbius flow as a skew-product system, we introduce the notion of the Erdős--Möbius flow in the $\B$-free setting.

%The paper is organized as follows. In section \ref{Tools} we recall the fundamental definitions, notations and tools from classical dynamical system'theory and spectral theory. Therein, we recall also the definition of entropy and the corener-stone in the machinery of Rokhlin-Sinai. This section is ended by a subsection \ref{Spec} in which we present an extension of the spectral theory to the class of bounded operator in Banach spaces. In section \ref{Princpal}, we state and prove our main result. In section \ref{Appli}, we presente some consenquences and applications in number theory.  

\section{Setup and key tools}\label{Tools}
We start by recalling that a topological dynamical system is a pair $(X,T)$ where $X$ is a compact metric space and $T$ is a homeomorphism. A measure dynamical system is the quadruplet $(Y,\B,S,\mu)$ where $\B$ is a $\sigma$-algebra, $S$ is an invertible bi-measurable measure-preserving transformation and $\mu$ is an $S$-invariant probability measure, that is,
$(S^*(\mu)(A)= \mu(S^{-1}A)=\mu(A)$, for any $A \in \B$. It is well-known that the set of $T$- invariant is a compact non-empty set. Therefore, any topological dynamical system give rise to a
measure dynamical system.\\
The topological entropy $\textrm{h}_{\textrm{top}}(T)$ of $T$ is given by 
\[\textrm{h}_{\textrm{top}}(T) = \underset{\varepsilon\to 0}\lim \underset{n\to +\infty}\limsup \dfrac{1}{n}
\textrm{log}~\textrm{sep}(n, T, \varepsilon).\] where for $n$ integer and $\varepsilon>0$,
$\textrm{sep}(n, T, \varepsilon)$ is the maximal possible cardinality of an ($n, T, \varepsilon$)-separated
set in $X$, this means that for every two distinct points of it, there exists $0\leq j<n$ with
$d(T^j(x), T^j(y)) > \varepsilon$, where $T^{j}$ denotes the $j$-$\textrm{th}$ iterate of $T$.
For more details, we refer to \cite[Chap. 7]{W}.\\

\noindent{}For any point $x \in X$, the \emph{topological entropy of $x$} denoted by $h_{top}(x)$ is defined as the topological entropy of the restriction of $T$ to the forward orbit closure 
\[
\mathcal{O}(x) := \overline{\{T^n x : n \in \Z\}}.
\]
\noindent{}
in the case where $T$ is non-invertible,
% In ergodic theory, the entropy of a measure-preserving transformation is a standard isomorphism invariant. For a non-invertible map, 
its entropy equals the entropy of its natural extension (the smallest invertible ex0tension), so entropy is well-defined regardless of irreversibility.

\noindent{}It is well known that the   topological entropy is connected to the measure-theoretical entropy. This latter notion quantifies the entropy of the measure dynamical system $(X,\B,T,\mu)$ with respect to $\mu$ as follows
$$h_\mu(T)=\sup_{\mathcal{P}, H(P)<+\infty}h_\mu(T,\mathcal{P}),$$
where 

$$h_\mu(T,\mathcal{P})=\lim \frac{1}{n}H_\mu(\bigvee_{i=0}^{n-1}T^{-i}\mathcal{P})=
\inf_{n \geq 1}H_\mu(\bigvee_{i=0}^{n-1}T^{-i}\mathcal{P}).$$
and
$$H_\mu(P)=-\sum_{A \in \mathcal{P}}\mu(A)\log(\mu(A)),$$
For each $n \in \Z$, we denote by \(H_n\) the Hilbert space 
\(L_0^2(X, T^n \mathcal{F}, \mu)\), and we define
\[
H_n \ominus H_{n-1}
= \left\{ f \in H_n \ :\ \forall g \in H_{n-1},\ \langle f, g \rangle = 0 \right\}.
\]

\noindent We recall that the variational principal assert that

\begin{eqnarray}\label{Va1}
	\textrm{h}_{\textrm{top}}(T)=\sup_{\mu, T^*(\mu)=\mu}h_\mu(T).
\end{eqnarray}

\noindent{}For a simple proof of this principal we refer to \cite{M}, \cite{Goo}. 

\noindent{}We proceed to recall some basic facts from the spectral analysis of process.

\subsection{\bf Some tools from spectral theory of dynamical systems } 
\paragraph{From classical spectral theory.} For any process  $(X,\A,\mu,T)$, the maps 
$T$ induces an operator $U_T$ in $L^p(X)$ via $f \mapsto U_{T} (f)= f \circ T$. This operator is
called Koopman operator. For  $p=2$ this operator is unitary and its spectral
resolution induces  a spectral decomposition of $L^{2} (X)$ \cite{P}:
\[
L^2(X)=\bigoplus_{i=0}^{+\infty}C(f_i) {\rm {~~and~~}}
\sigma_{f_1}\gg \sigma_{f_2}\gg\cdots
\]
where
\begin{itemize}
	\item $\{f_{i} \}_{i=1}^{+\infty}$ is a family of functions in $L^{2} (X)$;
	\item $C(f)\setdef \overline{\rm {span}}\{U_T^n(f): n \in \Z\}$ is the cyclic
	space generated by $f \in L^2(X)$;
	\item  $\sigma_f$ is the {\em spectral measure} on the circle generated by $f$
	via the Bochner-Herglotz relation
	\begin{equation}\label{fspmeasure}
		\widehat{\sigma_f}(n)=<U_T^nf,f>=\int_X f \circ T^n(x) \overline{f}(x)d\mu (x);
	\end{equation}
	\item for any two measures on the circle $\alpha$ and $\beta$,
	$\alpha \gg \beta$ means $\beta $ is absolutely continuous  with respect to
	$\alpha$: for any Borel set $A$, $\alpha(A)=0 \Longrightarrow \beta(A)=0$.
	The two measures $\alpha$ and  $\beta$ are equivalent if  and only if
	$\alpha\gg\beta$ and $\beta\gg\alpha$. We will denote measure equivalence by
	$\alpha \sim \beta$.
\end{itemize}
The spectral theorem ensures this spectral decomposition is unique up to
isomorphisms. This decomposition, also referred to as the Hellinger–Hahn decomposition, is treated in detail in \cite[Chap. 1]{Nad}. 

The {\em maximal spectral type} of $T$ is the equivalence class of the Borel
measure $\sigma_{f_1}$. The multiplicity function
$\M_{T} : \T \longrightarrow \{1,2,\cdots,\} \cup \{+\infty\}$ is defined
$\sigma_{f_1}$ a.e. and
\[
\M_T(z)=\ds \sum_{j=1}^{+\infty}\1_{Y_j}(z), \quad {\rm  where}, \ Y_1=\T \ {\rm and}\
Y_j={\rm{~supp~}}\frac{d\sigma_{f_j}}{d\sigma_{f_1}} \quad \forall j \geq 2.
\]
An integer $n \in \{1,2,\cdots,\} \cup \{+\infty\}$ is called an essential value of
$M_T$ if $\sigma_{f_1}\{ z \in \T : M_T(z)=n\}>0$. The multiplicity
is uniform or homogeneous if there is only one essential value of $M_T$.
The essential supremum of $M_T$ is called the maximal  spectral multiplicity of $T$.
The map $T$
\begin{itemize}
	\item has simple spectrum if $L^2(X)$ is a single cyclic space;
	\item has discrete spectrum if $L^2(X)$ has an
	orthonormal basis consisting of eigenfunctions of $U_T$,
	(in this case $\sigma_{f_1}$ is a discrete measure);
	\item has Lebesgue spectrum if $\sigma_{f_1}$ is equivalent 
	to the Lebesgue measure. It has absolutely continuous (or singular)
	spectrum if $\sigma_{f_1}$ is absolutely continuous (or singular)
	with respect to the Lebesgue measure.
\end{itemize}

The function $f_1$ can in fact be taken in $L^\infty(X)$; this follows from a theorem of Alexeyev \cite{Alex} (see also \cite[Chap. 16]{Nad} for a modern account).
\smallskip

\begin{defn} \label{defsingular} The {\em reduced spectral type} of the dynamical system is its
	spectral type on the $L_0^2 (X)$ - the space of square integrable functions with zero mean.
	Two dynamical systems are called {\em {spectrally disjoint}} if their reduced spectral types are
	mutually singular.
\end{defn}
\paragraph{The Rokhlin-Sinai machinery.} By Rokhlin-Sinai's framework, there is a connection between the entropy and the spectrum of process with positive entropy. Indeed,
\begin{thm}[Rokhlin-Sinai \cite {Rh}]\footnote{See also \cite[Theorem 12, p.69]{P}, \cite[Theorem 18.9, p.323]{G}.} \label{Q}Let $(X,\A,\mu,T)$ be a measure theoretic dynamical
	system on a Lebesgue space and $\Pi(T)$ be its Pinsker $\sigma$-algebra. Then, there exists a sub-$\sigma$-algebra $\F \subset \A$ such that $T^{-1}\F \subset \F$, $\bigvee_{n=0}^{+\infty}T^n \F=\F_{+\infty}=\A$ (i.e., the $\sigma$-algebra generated by $\{T^nF\ |\ F\in \F\,,\ n\in \{0\}\cup \N\}$ is $\A$), and $\bigcap_{n=0}^{+\infty}T^{-n}\F=\F_{-\infty}=\Pi(T)$.
\end{thm}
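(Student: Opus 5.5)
We prove Theorem \ref{Q} along the classical Rokhlin--Sinai route: first build an exhaustive $\sigma$-algebra from a generating partition, then identify its remote past with the Pinsker $\sigma$-algebra $\Pi(T)$. Two cases are immediate. If $h_\mu(T)=0$, then $\Pi(T)=\A$ and $\F=\A$ works. If $h_\mu(T)$ is positive but finite, by Krieger's theorem (on a Lebesgue space, with $T$ invertible) there is a finite generating partition $P$. For infinite entropy, or when only a countable partition is available, we replace $P$ by an increasing sequence of finite-entropy partitions $P_k$ with $\bigvee_k P_k=\A$ mod $\mu$, and run the construction below with the diagonal choice $\F=\bigvee_k\bigvee_{n\ge 0}T^{-n-k}P_k$.

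\emph{Step 1 (construction).} For a finite-entropy partition $Q$, put $Q^- = \bigvee_{n\ge1}T^{-n}Q$. We do not take $\F$ to be $P^-$ alone, since its tail need not equal $\Pi(T)$. Instead, following Rokhlin--Sinai, we set
\[
\F=\bigvee_{k\ge1}\Big(\bigvee_{n\ge 0}T^{-n}P_k^{(k)}\Big)\vee \Pi(T),
\]
where $P_k^{(k)}$ are coarse, time-shifted versions of refining partitions. The choice is arranged so that $T^{-1}\F\subset\F$ and $\bigvee_{n\ge0}T^n\F=\A$. Both properties are routine: invariance holds because each piece is a past $\sigma$-algebra, and exhaustiveness holds because $\bigvee_n T^nP^-\supset P$ and the $P_k$ generate.

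\emph{Step 2 (the tail).} We must show $\bigcap_n T^{-n}\F=\Pi(T)$. The inclusion $\supset$ is built in. For $\subset$, the key tool is Pinsker's formula
\[
h(T,Q)=H(Q\mid Q^-), \qquad \lim_{n}H\big(Q\mid T^{-n}\F\big)=H(Q\mid\Pi(T)) .
\]
Combined with the identity $\Pi(T)=\bigcap_n T^{-n}Q^-$ for a generator $Q$ (the Rokhlin--Sinai tail theorem), this shows that any set $A\in\F_{-\infty}$ has $h(T,\{A,A^c\})=0$. Indeed, $A$ is measurable with respect to $T^{-n}\F$ for every $n$, so conditional entropy arguments give $H(\{A,A^c\}\mid \{A,A^c\}^-)=0$. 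Hence $A\in\Pi(T)$.

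\emph{Main obstacle.} The hard point is the tail identity $\bigcap_n T^{-n}P^-=\Pi(T)$, together with its uniformity over the refining sequence $P_k$ in the infinite-entropy case. For this I would first follow the standard proof in Parry's book \cite[Theorem 12]{P}: show, via the martingale convergence theorem and the chain rule $H(Q\vee R\mid \cdot)=H(Q\mid\cdot)+H(R\mid Q\vee\cdot)$, that $H(Q\mid P^-\vee T^{-n}P^-)$ does not depend on the past beyond $\Pi(T)$ as $n\to\infty$. Since the statement is classical, the most economical presentation is to cite \cite{Rh}, \cite[Theorem 18.9]{G} for these steps and record only the construction.
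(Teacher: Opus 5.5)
The paper gives no proof of this statement. It quotes Rokhlin--Sinai \cite{Rh} with pointers to \cite{P} and \cite{G}, so your closing suggestion to cite it matches what the paper does. The argument you build around the citation, however, does not stand as a proof. In the finite-entropy case you contradict yourself. If $P$ is a generator with $H(P)<\infty$, then $\F=P^-$ already works, because $\bigcap_n T^{-n}P^-=\Pi(T)$ is exactly the tail identity you invoke in Step~2. Your claim that the tail of $P^-$ ``need not equal $\Pi(T)$'' is therefore false. (Krieger's theorem needs ergodicity, which is not assumed here; in general one uses a countable generator of finite entropy, or the sequence $P_k$.) This case also has a short proof that you should give instead of ``conditional entropy arguments''. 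Let $A\in\bigcap_nT^{-n}P^-$ and $\eta=\{A,A^c\}$. Then $\eta_T=\bigvee_{n\in\Z}T^n\eta\subset P^-$ and $\eta\subset P_T$. Pinsker's formula $h(P\vee\eta)=h(\eta)+H(P\mid P^-\vee\eta_T)=h(P)+H(\eta\mid\eta^-\vee P_T)$ then gives $h(\eta)=h(P)-H(P\mid P^-)=0$. The reverse inclusion $\Pi(T)\subset\bigcap_nT^{-n}\F$ holds for every exhaustive $\F$. So adjoining $\Pi(T)$ to $\F$ is harmless but buys nothing; all the difficulty lies in showing the tail is contained in $\Pi(T)$.

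The real gap is the infinite-entropy case, the only one that needs a construction. Your explicit diagonal choice $\F=\bigvee_k\bigvee_{n\ge0}T^{-n-k}P_k$ can have a tail strictly larger than $\Pi(T)$. Take the Bernoulli shift $(Tx)_i=x_{i+1}$ on $[0,1]^{\Z}$ with product Lebesgue measure, so $\Pi(T)$ is trivial. Let $d(t)$ be the first binary digit of $t$, and let $P_k$ be generated by the first $k$ digits of $x_{-k},\dots,x_k$ and by $d(x_{-i^2})$, $i\le k$. These $P_k$ are finite, increasing, and generate $\A$. Yet $T^{-n-k}P_k$ contains $d(x_{n+k-k^2})$, so $\F$ contains the nontrivial $T$-invariant $\sigma$-algebra generated by all $d(x_j)$, $j\in\Z$, which therefore lies in $\bigcap_mT^{-m}\F$.

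Your Step~1 formula with ``coarse, time-shifted'' $P_k^{(k)}$ leaves the shifts unspecified, and that choice is the entire content of the theorem. Step~2 cannot supply it. By reverse martingale convergence, $\lim_nH(Q\mid T^{-n}\F)=H(Q\mid\bigcap_nT^{-n}\F)$, so asserting that this equals $H(Q\mid\Pi(T))$ is the conclusion itself. Also, in your ``main obstacle'' $P^-\vee T^{-n}P^-=P^-$, so the quantity written there does not depend on $n$.

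The missing idea is the lemma $\lim_{N\to\infty}H(\alpha\mid\alpha^-\vee T^{-N}\beta^-)=H(\alpha\mid\alpha^-)$ for finite partitions $\alpha,\beta$. It follows from $h(\alpha\vee T^{-N}\beta)=h(\alpha\vee\beta)$, the chain rule, martingale convergence, and Pinsker's formula $h(\alpha\vee\beta)=h(\alpha)+H(\beta\mid\beta^-\vee\alpha_T)$. The lemma lets you choose $N_1<N_2<\cdots$ inductively so that $\F=\bigvee_kT^{-N_k}P_k^-$ satisfies $H(P_j\mid P_j^-\vee\F)\ge H(P_j\mid P_j^-)-\varepsilon_j$ with $\varepsilon_j\to0$. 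This $\F$ is exhaustive when the $P_k$ increase and generate under $T$. Then for any finite $\eta$ measurable with respect to the $T$-invariant tail, Pinsker's formula applied to $P_j\vee\eta$ gives $h(\eta)\le\varepsilon_j+H(\eta\mid\eta^-\vee(P_j)_T)\to0$. That is, $\bigcap_nT^{-n}\F\subset\Pi(T)$.
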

\noindent{} As is customary, if the sequence of $\sigma$-algebras $(\mathcal{F}_n)$ is increasing, we write  
\[
\mathcal{F}_n \nearrow \mathcal{F}_{+\infty},
\qquad \text{where } \mathcal{F}_{+\infty} := \sigma\!\left(\bigcup_{n} \mathcal{F}_n\right)=\bigvee_{n=0}^{+\infty}\F_n.
\]
Similarly, if the sequence of $\sigma$-algebras $(\mathcal{F}_n)$ is decreasing, we write  
\[
\mathcal{F}_n \searrow \mathcal{F}_{-\infty},
\qquad \text{where } \mathcal{F}_{-\infty} := \bigcap_{n} \mathcal{F}_n.
\]

Note that if the measure-theoretic entropy of the dynamical system  $(X,\A,\mu,T)$ is positive (i.e. $h_\mu(T)>0$), then the sub-\(\sigma\)-algebra  $\F \subset \A$  is non-trivial. \\

%We recall that  $h_\mu(T)$ is given by 

%$$h_\mu(T)=\sup_{\mathcal{P}, H(P)<+\infty}h_\mu(T,\mathcal{P}),$$
%where 
%$$H(P)=-\sum_{A \in \mathcal{P}}\mu(A)\log(\mu(A)),$$
%and 
%$$h_\mu(T,\mathcal{P})=\lim \frac{1}{n}H_\mu(\bigvee_{i=0}^{n-1}T^{-i}\mathcal{P})=
%\inf_{n \geq 1}H_\mu(\bigvee_{i=0}^{n-1}T^{-i}\mathcal{P}).$$
%For each $n \in \Z$, we denote by \(H_n\) the Hilbert space 
%\(L_0^2(X, T^n \mathcal{F}, \mu)\), and we define
%\[
%H_n \ominus H_{n-1}
%= \left\{ f \in H_n \ :\ \forall g \in H_{n-1},\ \langle f, g \rangle = 0 \right\}.
%\]
\noindent{}In this setting, we have the following theorem.

\smallskip

\begin{thm}[Rokhlin-Sinai \cite{CFS}]\label{CFS-F} If the measure-preserving transformation $T$ possesses a sub-\(\sigma\)-algebra  $\F \subset \A$ such that $\F \varsubsetneq T \F $, then the unitary operator $U_T$ has countable Lebesgue spectrum on $ \ds \bigoplus_{n \in \Z}H_n$.
\end{thm}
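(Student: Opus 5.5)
We prove Theorem \ref{CFS-F} by the classical Rokhlin--Sinai argument: the increments $H_n \ominus H_{n-1}$ of the filtration are wandering subspaces for $U_T$, and the invariant subspace they span carries countable Lebesgue spectrum.

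Convention: $H_n = L^2_0(X,T^n\F,\mu)$, with $T^n\F$ increasing in $n$ (the hypothesis $\F \varsubsetneq T\F$ gives $H_{n-1}\subset H_n$, strictly). Thus $\bigoplus_{n\in\Z}H_n$ is to be read as the closed span
\[
\overline{\bigcup_n H_n}=\bigoplus_{n\in\Z}\big(H_n\ominus H_{n-1}\big)\ \oplus\ H_{-\infty},
\qquad H_{-\infty}=\bigcap_n H_n .
\]
Strictly speaking, the countable Lebesgue statement concerns the orthogonal sum of the increments, $\bigoplus_n (H_n\ominus H_{n-1})$. Accordingly, we prove Lebesgue spectrum on that space, and we interpret the statement in that sense.

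\textbf{Step 1: transport of the filtration.} First we check how $U_T$ moves the subspaces. With $U_Tf=f\circ T$, if $f$ is $T^n\F$-measurable then $f\circ T$ is measurable with respect to $T^{-1}T^n\F=T^{n-1}\F$. Hence $U_T H_n=H_{n-1}$, or $U_T^{-1}H_n=H_{n+1}$; the direction is immaterial. Since $U_T$ is unitary, it also maps orthocomplements to orthocomplements, so
\[
U_T\big(H_n\ominus H_{n-1}\big)=H_{n-1}\ominus H_{n-2}.
\]
Put $W=H_0\ominus H_{-1}$. Then $H_n\ominus H_{n-1}=U_T^{-n}W$, and these spaces are mutually orthogonal because the $H_n$ are nested. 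Therefore $W$ is a wandering subspace and
\[
\bigoplus_n\big(H_n\ominus H_{n-1}\big)=\bigoplus_{n\in\Z}U_T^{\,n}W .
\]

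\textbf{Step 2: Lebesgue spectrum from a wandering subspace.} Take an orthonormal basis $(e_j)$ of $W$. For each $j$, the vectors $(U_T^ne_j)_{n\in\Z}$ are orthonormal, so $\widehat{\sigma_{e_j}}(n)=\delta_{n,0}$ and $\sigma_{e_j}$ is normalized Lebesgue measure. The cyclic spaces $C(e_j)$ are pairwise orthogonal and together span $\bigoplus_n U_T^nW$. Hence $U_T$ restricted to this space is unitarily equivalent to multiplication by $z$ on $L^2(\T)^{\otimes \dim W}$, that is, Lebesgue spectrum with multiplicity $\dim W$.

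\textbf{Step 3: $\dim W=\infty$ (main obstacle).} Strictness of the inclusion only gives $W\neq\{0\}$, so the remaining task is to show $W$ is infinite-dimensional. I would argue by contradiction. Since $H_{-1}\subsetneq H_0$, there is a set $A\in\F_0$ that is not $\F_{-1}$-measurable. Because $\F_0=T\F_{-1}\supsetneq \F_{-1}$ with $\F_{-1}=T^{-1}\F_0$, the conditional measure of $\F_0$ given $\F_{-1}$ is non-atomic or at least non-trivial on a set of positive measure. Then $\mathbbm 1_A-\eE(\mathbbm 1_A|\F_{-1})$ is nonzero, and multiplying it by arbitrary bounded $\F_{-1}$-measurable functions keeps it in $W$, since $W$ is an $L^\infty(\F_{-1})$-module. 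Choosing infinitely many disjoint $\F_{-1}$-sets, on each of which this function is nonzero, produces infinitely many orthogonal nonzero vectors in $W$.

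The delicate point is ensuring that such infinitely many disjoint sets exist. This needs $\F_{-1}$ to be non-atomic on the support of the nonzero vector. In the Lebesgue-space, ergodic setting this follows from the fact that the $T$-invariant limit $\F_{-\infty}$ is non-atomic or the space is non-atomic; this is the step where the Lebesgue-space structure (Rokhlin's theory of measurable partitions) must be invoked. Once $\dim W=\aleph_0$ (countable, by separability), the spectrum on $\bigoplus_n(H_n\ominus H_{n-1})$ is countable Lebesgue.
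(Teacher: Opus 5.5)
Your Steps 1 and 2 are the standard argument; the paper gives no proof of its own and only cites Cornfeld--Fomin--Sinai. Your reading of $\bigoplus_n H_n$ as $\bigoplus_n(H_n\ominus H_{n-1})$ is correct and matches how the paper uses the theorem later.

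The gap is in Step 3, which is exactly where the word \emph{countable} comes from. Your module argument needs $\F_{-1}$ to be non-atomic on $S=\{\eE(|f|^2\mid\F_{-1})>0\}$ for some nonzero $f\in W$. None of the reasons you offer gives this:
\begin{itemize}
\item Non-atomicity of $\F_{-\infty}$ fails in the basic example: for a Bernoulli shift with $\F$ the past, $\F_{-\infty}$ is trivial, i.e.\ one atom of full measure.
\item Non-atomicity of $(X,\mu)$ does not pass to sub-$\sigma$-algebras; a finitely generated $\F_{-1}$ is purely atomic on any space.
\item Ergodicity is not among the hypotheses.
\item Non-triviality of the conditional measures of $\F_0$ given $\F_{-1}$ concerns the fibres, whereas the splitting you need takes place in the base $\F_{-1}$.
\end{itemize}
As written, Step 3 asserts the key fact rather than proving it.

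The missing idea is to use $\F_0=T\F_{-1}$ together with measure preservation to show that $\F_{-1}$ and $\F_0$ have the \emph{same} atoms.
\begin{itemize}
\item Since $T$ is invertible and $\mu$-preserving, it maps the atoms of $\F_{-1}$ bijectively onto the atoms of $\F_0$ and preserves their measures.
\item Every atom $C$ of $\F_0$ lies (mod $0$) in an atom of the coarser $\F_{-1}$, namely $\{\eE(\1_C\mid\F_{-1})>0\}$. This holds because $E\mapsto\mu(C\cap E)$ takes only the values $0$ and $\mu(C)$ on $\F_{-1}$.
\item Matching atoms in decreasing order of measure (there are finitely many of each size) shows that the two families of atoms coincide.
\end{itemize}
Let $A\in\F_{-1}$ be the union of these atoms. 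Every $\F_0$-measurable function is a.e.\ constant on each of them. Hence $f\1_A\in W\cap L^2(\F_{-1})=\{0\}$ for all $f\in W$, so $S\subset X\setminus A$, where $\F_{-1}$ has no atoms. Your splitting of $S$ into infinitely many disjoint $\F_{-1}$-sets $E_k$ of positive measure then yields orthogonal nonzero vectors $\1_{E_k}f\in W$. In the ergodic case the same lemma gives $TA=A$, so $A$ is null (it cannot have full measure since $\F_{-1}\neq\F_0$); this is the correct form of your heuristic.

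With this lemma inserted, the proof is complete without ergodicity and without Rokhlin's theory of measurable partitions. The Lebesgue-space assumption is used only through separability, to get $\dim W=\aleph_0$.
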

For the proof of Theorem \ref{CFS-F}, we refer to \cite[Chap. 13, pp. 338-339]{CFS}. As a consequence,  Rokhlin and Sinai proved that any  measure theoretic dynamical with positive entropy has of has countable Lebesgue component in its spectrum \cite[Lemma (14.1-14.3)]{Rh}).

\smallskip

%\begin{prop}[Rokhlin-Sinai \cite {Rh}]\label{P}Let $(X,\A,\mu,T)$ be a measure theoretic dynamical
%	system on a Lebesgue space and $\Pi(T)$ be its Pinsker $\sigma$-algebra. Suppose $h_\mu (T)$
%	is positive. Then

%	\noindent (i) $L^2 (X,\Pi (T),\mu )^\perp$-the orthocomplement to the subspace
%	$L^2(X,\Pi (T),\mu)$ in $L^2(X,\A,\mu)$ is infinite dimensional and
%	
%	\noindent (ii) $U_T$ on $L^2 (X,\Pi (T),\mu )^\perp$.
%\end{prop}

%\medskip

\paragraph{Extension of the spectral analysis to any operator.}\label{Spec}
It is well known that the previous spectral decomposition can be extended to normal operators \footnote{See for instance \cite[p. 321]{Ru}. We recall that an operator $U$ is said to be normal if it commute with its adjoint operator}. But, here, 
inspired by Wiener's approach  and following the path of Below \& Losert \cite{BL} and  Coquet–Mendès France–Kamae \cite{CMFK}, we introduce a more general concept of the spectral analysis of any operators based on its dynamics on orbits. Our approach is also inspired by the very recent work by M. Lin and the author on Sarnak's Möbius disjointedness conjecture for operators \cite{ALV}. \\

Let $T$ be an operator on a Banach space $B$. For any $v \in B$ and $\phi \in B'$ (where $B'$ stands for the topological dual of $B$), we define the valuation function $f_v :~~X^* \to \C$ by $f_v(\phi)=\phi(v)$, and
$$a_n(T,\phi,v) = f_v(\phi \circ T)=\phi(T^n v), \quad n \geq 0.$$
If there is no ambiguity, we shall denote $a_n(T,\phi,v)$ simply by $a_n$. If its correlation of order two exists, we define the spectral measure $\sigma_{T,\phi,v}$ via its Fourier transform:
\begin{equation}\label{Sm1}
	\widehat{\sigma_{T,\phi,v}}(k) = \lim_{N \to \infty} \frac{1}{N} \sum_{n=0}^{N-1} a_{n+k} \overline{a}_n = F(k).
\end{equation}

for each integer $k \in \Z$. Our definition is a consequence of Herglotz-Bochner theorem. Indeed, the sequence $(F(k))_{k \geq 1}$ can be extended to negative
integers by setting
\[
\widehat{F}(-k)=\overline{F(k)}.
\]
It is easy to see that $(F(k))_{k \in \Z }$  is positive definite on $\Z$ and therefore by the
Herglotz-Bochner theorem, there exists a unique positive finite measure $\widehat{\sigma_{T,\phi,v}}$ on
the circle $\T$ such that the Fourier coefficients of $\widehat{\sigma_{T,\phi,v}}$ are given by the sequence $F$, that is,
\[
\widehat{\sigma_a}(k)\stackrel{\rm{def}}{=}
\int_{\T} e^{-ikx} d\widehat{\sigma_{T,\phi,v}}(x) = F(k).
\]
%The measure $ $ is called the {\em spectral measure of the sequence $a$}.\\
It may be possible that  the correlation of order of $(a_n)$ may not exists. but, in this case, since the sequence $(a_n)_{n}$ is bounded,
we can always extract a subsequence $(n_r)$ such that
\[
\lim_{r \rightarrow \infty}
\frac{1}{n_r}\sum_{j=0}^{n_r-1}a_{j+k} \overline{a_{j}}
\]
exists for each $k\in \N$. This allows us to define the spectral measure of $(a_n)$ along the sequence $(n_r)$. We thus put 
\[
\widehat{\sigma_{T,\phi,v,(n_r)}(k)}\stackrel{\rm{def}}{=}
\int_{\T} e^{-ikx} d\widehat{\sigma_{T,\phi,v,(n_r)}}(x) = \lim_{r \rightarrow \infty} 
\frac{1}{n_r}\sum_{j=0}^{n_r-1}a_{j+k} \overline{a_{j}}.
\]
Finally, we define the spectral type of $T$ as
\[\sigma_T=\bigvee_{\phi \in B', \|\phi\|=1, v \in B}\bigvee_{(n_k) \subset \N}\sigma_{T,\phi,v,(n_r)}.
\]
In the definition above, we use the fact that the space of bounded signed measures on the torus is a Riesz space \cite{Z},\cite{F4}. The spectrum of the operator $T$ is said to be singular if $\sigma_T$ is singular to the Lebesgue measure on the circle. \\
%Another way to define the spectral type of a continuous operator on Banach space is to consider the topological dynamical system $(B_1,T^*)$, $B_1$ is unit ball of the Banach space. Since $B_1$ is compact, by Krilov-Bogliougov, the set of invariant probability measure $\mathcal{P}^*_1(B_1)$ is not empty. Let $\nu^*$ be an invariant measure. We thus have the spectral type of $(B_1,T^*, \nu^*)$ which we denote by $\sigma_{T^*,\nu^*}$. We define the spectral type of $T$ relatively to $\nu^*$ by
%$$\sigma_{T,\nu^*}=\sigma_{T^*,\nu^*}.$$
	
Another way to define the spectral type of a continuous linear operator $T$ on a Banach space $X$ is to consider the topological dynamical system $(B_{X^*}, T^*)$, where $B_{X^*}$ denotes the closed unit ball of the dual space $X^*$ equipped with the weak-$*$ topology. Since $B_{X^*}$ is compact in the weak-$*$ topology by the Banach--Alaoglu theorem and $T^*$ is weak-$*$ continuous, the Krylov--Bogolyubov theorem implies that the set $\mathcal{P}_{T^*}(B_{X^*})$ of $T^*$-invariant probability measures on $B_{X^*}$ is non-empty. Let $\nu^*$ be such an invariant measure. We obtain the spectral type of the measure-preserving system $(B_{X^*}, T^*, \nu^*)$, denoted by $\sigma_{T^*,\nu^*}$. We then define the spectral type of $T$ relative to $\nu^*$ by
	\begin{equation}
		\sigma_{T,\nu^*} = \sigma_{T^*,\nu^*}.
	\end{equation}
We notice that, by Bourgain's observation \cite{Bo1}, we have
$$\sigma_{f_v}=W^*\lim \Big|\frac{1}{\sqrt{N}}f_v(\phi \circ T^n) e^{2 i \pi n \theta}\Big|^2 d\theta, \textrm{~~for~~$\nu^*$-almost~~all~~} \phi,$$
where  $d\theta$ is the Lebesgue measure on the circle identified with $[0,1).$

\noindent{}With this observation, a connection between the two definitions can be established. We omit the details and leave the establishment of this connection as an exercise for the reader.

\noindent{}The spectral type of $T$ is said to be singular if $\sigma_{T,\nu^*}$ is singular for all $ \nu^*\in \mathcal{P}_{T^*}(B_{X^*})$, $\sigma_{T^*,\nu^*}$ is singular.
\section{The first main result and its proof.}\label{Prin}
In this section, we stat our first main result and present its proof.

\smallskip

\begin{thm}\label{main}Möbius-Sarnak process equipped with any quasi-generic probability measure $\eta \in \mathcal{I}(x)$ has a countable Lebesgue component, where $x$ stand for the privilege point of the process.
\end{thm}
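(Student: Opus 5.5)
Fix $\eta \in \mathcal{I}(x)$ and work in the process $(X,\A,T,\eta)$. The plan is to reduce everything to Theorem \ref{CFS-F}: we must produce a sub-$\sigma$-algebra $\F\subset\A$ with $T^{-1}\F\subset\F$ and $\F\varsubsetneq T\F$. Then the spaces $H_n\ominus H_{n-1}$ are non-zero and mutually orthogonal, and $U_T$ shifts them. We then need to check that this construction is non-trivial for $\eta$, not merely for $\eta_M$, and that the resulting Lebesgue component is countable.

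\textbf{First step: the Pinsker algebra of $\eta$ is proper.} By assumption \ref{P}, $\Pi_\eta\supset\Pi_{\eta_M}=s^{-1}(\B)$. We must show that $\Pi_\eta\neq\A$ modulo $\eta$, i.e.\ that $h_\eta(T)>0$. For this we use assumption \ref{M}: $s^*(\eta)=m$. This identifies the factor of $\eta$ through $s$ with the Kronecker-type factor $(Y,\B,S,m)$, so the factor has zero entropy. Then we use assumption \ref{GO} for $\eta_M$: there $\eE(f_1|s^{-1}\B)=0$ while $f_1\neq 0$. We would argue that $f_1$ is not $s^{-1}(\B)$-measurable under $\eta$. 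Indeed, $f_1^2$ (or $|f_1|$) is $s$-measurable, so $f_1$ would be determined by its ``sign'', and the generic-point structure together with $s^*\eta=m$ forces this sign information to carry positive entropy.

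I expect this step to be the main obstacle, since a quasi-generic $\eta$ could a priori concentrate on a zero-entropy lift of $m$. What I would try first is to show that the fibres over $Y$ carry the relative structure inherited from $\eta_M$ through \ref{P}. If that fails, I would content myself with the case where $\Pi_\eta\varsubsetneq\A$, which is what \ref{P} is really designed to give.

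\textbf{Second step: the Rokhlin--Sinai algebra.} Once $h_\eta(T)>0$, Theorem \ref{Q} applied to $(X,\A,\eta,T)$ gives $\F$ with $T^{-1}\F\subset\F$, $\F_{+\infty}=\A$ and $\F_{-\infty}=\Pi_\eta(T)$. Since $\Pi_\eta\neq\A$, the inclusion $T^{-1}\F\subset\F$ cannot be an equality. If it were, $\F=\F_{-\infty}=\Pi_\eta$ and also $\F=\F_{+\infty}=\A$, a contradiction. Hence $\F\varsubsetneq T\F$, and Theorem \ref{CFS-F} yields Lebesgue spectrum on $\bigoplus_n H_n$.

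\textbf{Third step: countable multiplicity.} To get a \emph{countable} component, we show that $H_0\ominus H_{-1}$ is infinite-dimensional. This follows from non-atomicity: the conditional measures of $\F$ over $T^{-1}\F$ are non-atomic on a set of positive measure, because otherwise the entropy would be carried by finitely many atoms and the relative entropy argument of Rokhlin--Sinai \cite[Lemma 14.1--14.3]{Rh} gives a contradiction. Here we also use that $s^{-1}(\B)$ is non-atomic, from \ref{GO}. An orthonormal basis $(g_j)$ of $H_0\ominus H_{-1}$ then produces pairwise orthogonal cyclic spaces $C(g_j)$, each with spectral measure equal to Lebesgue measure, since $\langle U_T^n g_j,g_j\rangle=\delta_{n,0}$. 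This gives the countable Lebesgue component, contained in $L^2(\Pi_\eta)^\perp$.
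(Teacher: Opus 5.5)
\textbf{The gap is your first step.} Everything else rests on $h_\eta(T)>0$, and this does not follow from \ref{top}--\ref{P}.

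Hypothesis \ref{P} only bounds $\Pi_\eta$ from below, and it is in fact automatic from \ref{M}: $s$ maps $(X,\A,T,\eta)$ onto the zero-entropy system $(Y,\B,S,m)$, so $s^{-1}(\B)\subset\Pi_\eta$ for every lift $\eta$ of $m$. Hypothesis \ref{GO} concerns $\eta_M$ only, and conditional expectations do not transfer from $\eta_M$ to $\eta$. Even if you showed that $f_1$ is not $s^{-1}(\B)$-measurable modulo $\eta$, you would not get $\Pi_\eta\neq\A$, since $\Pi_\eta$ may be strictly larger than $s^{-1}(\B)$.

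Here is a concrete obstruction. Keep $X=X_{\A_3}$, $\eta_M$, $s$ and $f_1=\mathrm{pr}_1$ as for the Möbius flow, but take as privileged point $\bmu^2$, viewed in $X_{\A_3}$ with all nonzero coordinates equal to $1$.
\begin{itemize}
\item Its only quasi-generic measure is $\eta=\iota_*m$, where $\iota:X_{\A_2}\hookrightarrow X_{\A_3}$ is the inclusion, and \ref{M}, \ref{P} hold.
\item But $s\circ\iota=\mathrm{id}$, so $s$ is an isomorphism modulo $\eta$ and $L^2(X,\A,\eta)=L^2(X,s^{-1}(\B),\eta)$.
\item Hence $h_\eta(T)=0$, the spectrum is discrete, even $f_1$ is $s^{-1}(\B)$-measurable modulo $\eta$, and the ``sign information'' you invoke is trivial.
\end{itemize}
So Step 1 cannot be derived from the axioms, and your fallback proves only the positive-entropy case.

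Given $h_\eta>0$, your Steps 2 and 3 are sound. Your module argument for infinite multiplicity is correct and is more than the paper gives: multiply a nonzero $g\in H_0\ominus H_{-1}$ by indicators of disjoint sets of $T^{-1}\F\supset s^{-1}(\B)$, which is non-atomic modulo $\eta$ because $s^*\eta=m$. The paper merely asserts that $W$ is infinite-dimensional.

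\textbf{How the paper proceeds, and why that is not the missing idea.} The paper does not try to prove $h_\eta>0$.
\begin{itemize}
\item It uses $h_{\eta_M}>0$ (the key observation) and takes the Rokhlin--Sinai algebra $\mathcal{E}$ of $(X,\A,T,\eta_M)$, with $T^{-n}\mathcal{E}\searrow s^{-1}(\B)$.
\item It feeds $\F=\mathcal{E}\cap\Pi_\eta$ into Theorem \ref{CFS-F}, placing the Lebesgue component in $V=L^2(X,\Pi_\eta,\eta)\ominus L^2(X,\Pi_{\eta_M},\eta)$.
\item When $h_\eta=0$, it places the component in $L^2(X,\Pi_{\eta_M},\eta)^{\perp}$.
\end{itemize}
Your argument is exactly the branch $h_\eta>0$ of the paper's final dichotomy.

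You should not adopt the paper's transfer as the fix. The properties of $\mathcal{E}$ hold modulo $\eta_M$, not modulo $\eta$. In the example above, both $V$ and $L^2(X,\Pi_{\eta_M},\eta)^{\perp}$ are $\{0\}$, so the hypothesis $\F\varsubsetneq T\F$ of Theorem \ref{CFS-F} fails modulo $\eta$. What is actually needed is extra information on the privileged point, for instance $\eE_\eta(f_1\mid\Pi_\eta)=0$ together with $f_1\neq 0$ in $L^2(\eta)$. That forces $\Pi_\eta\neq\A$, after which your Steps 2--3 finish the proof. For the Möbius flow with $x=\bmu$, such a statement must come from arithmetic input on $\bmu$ that \ref{top}--\ref{P} do not encode.
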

For the proof, we need to observe first the following.\\
\paragraph{\bf{Key observation.}}\label{key-positive} The entropy of $\eta_M$ is positive. Indeed, if not, we get
$\Pi_{\eta_M}(S)=\mathcal{B}(\A_3)$ mod $\eta_M$, where $\mathcal{B}(\mathcal{A}_3)$ is the whole $\sigma$-algebra. We thus have that
$$
\eE(f_1|_{\Pi_{\eta_M}(S)}) = f_1=0\,,
$$
which is impossible since $\{f_n\}$ generated the whole $\sigma$-algebra. 
It follows that we have
\begin{cor}\label{Vp}Möbius-Sarnak process has a positive topological entropy.
\end{cor}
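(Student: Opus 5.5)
The corollary should follow from the Key observation combined with the variational principle \eqref{Va1}. My plan is to view the Möbius-Sarnak process as a topological dynamical system $(X,T)$, compact by property \ref{top}, on which $\eta_M$ is a $T$-invariant Borel probability measure. Once $h_{\eta_M}(T)>0$ is known, \eqref{Va1} gives
\[
\textrm{h}_{\textrm{top}}(T)=\sup_{\mu,\,T^*(\mu)=\mu}h_\mu(T)\;\geq\; h_{\eta_M}(T)>0 .
\]
So everything rests on a clean proof that $h_{\eta_M}(T)>0$.

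For that step I would sharpen the Key observation as follows. Suppose, to the contrary, that $h_{\eta_M}(T)=0$. Then the whole system is its own zero-entropy factor, so $\Pi_{\eta_M}(T)=\A$ modulo $\eta_M$. Conditioning on $\A$ is the identity, so property \ref{GO} gives $f_1=\eE(f_1|\Pi_{\eta_M})=0$ $\eta_M$-a.e. Hence $f_n=f_1\circ T^n=0$ a.e. for every $n$, and the $\sigma$-algebra generated by $(f_n)$ is trivial mod $\eta_M$. But $(f_n)$ generates $\A$, so $\A$ would be trivial. This contradicts the assumption in \ref{GO} that $\Pi_{\eta_M}=s^{-1}(\B)$ is a \emph{proper} non-atomic sub-$\sigma$-algebra of $\A$: a non-atomic sub-$\sigma$-algebra cannot sit inside a trivial one. (Alternatively, if $\Pi_{\eta_M}=\A$ then it is not proper, which already contradicts \ref{GO}.) Either way, $h_{\eta_M}(T)>0$.

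The main obstacle is the measure-theoretic bookkeeping. The measure $\eta_M$ lives on some $\sigma$-algebra $\A$ of $X$, and \eqref{Va1} requires a Borel invariant measure on the compact metric space $X$. I would first check that $\A$ can be taken to be (the completion of) the Borel $\sigma$-algebra. This is the case in the Möbius model, where the coordinate functions $f_n=\mathrm{pr}_1\circ S^n$ generate the Borel sets of $X_3$, and I would state it as a standing identification. If that identification fails in the abstract setting, I would instead pass to the topological model given by the coding map $x\mapsto (f_n(x))_n$ into a shift space, apply the variational principle there, and use that topological entropy does not increase under factors. This comparison only bounds the model's entropy from above, so I would assume the process is presented in this coding form, exactly as for the Möbius flow.
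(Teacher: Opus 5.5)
Your proposal is correct and follows the paper's own route. The Key observation gives $h_{\eta_M}(T)>0$: zero entropy would force $\Pi_{\eta_M}=\A$ mod $\eta_M$, hence $f_1=\eE(f_1|\Pi_{\eta_M})=0$, which contradicts that the $f_n$ generate $\A$; the variational principle \eqref{Va1} then gives $\textrm{h}_{\textrm{top}}(T)\geq h_{\eta_M}(T)>0$. Your explicit contradiction via the properness and non-atomicity of $s^{-1}(\B)$, and your check that $\eta_M$ is a Borel measure on $X$ (automatic in the shift presentation, as for the Möbius flow), only spell out steps the paper leaves implicit.
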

The proof of \ref{Vp} follows from the variational principle (\ref{Va1}). Let us point out also that our proof gives the following.
% we shall use the arguments of Rokhlin-Sinai (Theorem \ref{Q} combined with Proposition \ref{P} ) to prove the following.

\smallskip

\begin{prop} \label{abscont} For all quasi-generic measures $\eta \in \mathcal{I}_S(x)$
	, the spectral measure of every \linebreak $f\in {L^2(X_{\A_3},\Pi(S),\eta)}^{\perp}$ is absolutely continuous with
	respect to Lebesgue measure.
\end{prop}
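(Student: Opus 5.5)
The plan is to run the Rokhlin--Sinai machinery (Theorems \ref{Q} and \ref{CFS-F}) for each quasi-generic measure $\eta \in \mathcal{I}_S(x)$. The first task is to show that $h_\eta(S)>0$, so that the machinery produces a nontrivial filtration. Here we use property \ref{P}: since $\Pi_\eta \supset \Pi_{\eta_M} = s^{-1}(\B)$, and $s^*(\eta)=m=s^*(\eta_M)$ by property \ref{M}, the restrictions of $\eta$ and $\eta_M$ to $s^{-1}(\B)$ coincide. If $h_\eta(S)=0$, then $\Pi_\eta$ is the whole $\sigma$-algebra $\B(\A_3)$ mod $\eta$. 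We then repeat the Key observation. We transfer the identity $\eE(f_1|_{\Pi_{\eta_M}(S)})=0$ to $\eta$, using that $f_1$ is determined on fibres of $s$ (for the Möbius flow, $f_1=\textrm{pr}_1$ and $|f_1|^2=\textrm{pr}_1\circ s$ is $s^{-1}(\B)$-measurable). From this we aim to derive a contradiction with the fact that the $f_n$ generate $\A$ and $\eta$ charges $\{f_1\neq 0\}$.

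Assuming $h_\eta(S)>0$, we apply Theorem \ref{Q} to $(X,\A_3,\eta,S)$. This gives a sub-$\sigma$-algebra $\F$ with $S^{-1}\F\subset\F$, $\bigvee_{n\ge0} S^n\F=\A_3$ and $\bigcap_{n\ge0} S^{-n}\F=\Pi_\eta(S)$. Positivity of entropy forces $S^{-1}\F \varsubsetneq \F$, since otherwise $\F=\Pi_\eta(S)=\A_3$. Setting $H_n=L^2_0(X,S^n\F,\eta)$, we get $H_n\nearrow L^2_0(\A_3)$ as $n\to+\infty$ and $H_n\searrow L^2_0(\Pi_\eta(S))$ as $n\to-\infty$. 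Hence
\[
L^2(X,\Pi(S),\eta)^{\perp}=\bigoplus_{n\in\Z} \bigl(H_n\ominus H_{n-1}\bigr).
\]
Theorem \ref{CFS-F} says that $U_S$ acts on this space with countable Lebesgue spectrum. Concretely, we pick an orthonormal basis $(e_j)$ of the wandering subspace $H_0\ominus H_{-1}$. The vectors $U_S^n e_j$ are then pairwise orthogonal, so each $\sigma_{e_j}$ is Lebesgue measure and the cyclic spaces $C(e_j)$ exhaust the orthocomplement.

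Next we take any $f$ in the orthocomplement and write $f=\sum_j f_j$ with $f_j\in C(e_j)$ mutually orthogonal and invariant under $U_S$. Each $f_j$ corresponds to some $g_j\in L^2(\T,d\theta)$ under the isomorphism $C(e_j)\cong L^2(\T,d\theta)$, so $\sigma_{f_j}=|g_j|^2\,d\theta$. Then $\sigma_f=\sum_j \sigma_{f_j}$, and this series converges in total variation because $\sum_j\|f_j\|^2=\|f\|^2$. An $L^1$-convergent sum of absolutely continuous measures is absolutely continuous, which gives the claim.

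The main obstacle is the first step: proving $h_\eta(S)>0$ for every quasi-generic $\eta$, and not only for $\eta_M$. The hypotheses only relate $\eta$ to $\eta_M$ through the factor $s$ and through the inclusion of Pinsker algebras. If that argument stalls, I would first try to use the inclusion $\Pi_\eta\supset\Pi_{\eta_M}$ together with a relative-entropy (Abramov--Rokhlin) comparison over the common factor $(Y,m)$. If that also fails, the statement is still true for the orthocomplement trivially when $h_\eta(S)=0$, since then $L^2(\Pi(S))^\perp=\{0\}$. So the proposition holds regardless, and positivity is needed only for the stronger Theorem \ref{main}.
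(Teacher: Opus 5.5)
\textbf{Gap: you prove the statement for the wrong $\sigma$-algebra.} What you prove is the classical Rokhlin--Sinai statement for the Pinsker algebra of $\eta$ itself. Your filtration $S^n\F$ comes from $(X,\A,S,\eta)$ and its tail is $\Pi_\eta(S)$. The argument correctly gives countable Lebesgue spectrum on $L^2(X,\Pi_\eta(S),\eta)^{\perp}$, and for that your first paragraph is indeed unnecessary.

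In the paper, however, the proposition is extracted from the proof of Theorem \ref{main}. The $\sigma$-algebra it concerns is $\Pi_{\eta_M}(S)=s^{-1}(\B)$, which makes sense for every $\eta$ and is contained in $\Pi_\eta(S)$ by property \ref{P}. The space to control is therefore
\[
L^2(X,\Pi_{\eta_M},\eta)^{\perp}=L^2(X,\Pi_{\eta},\eta)^{\perp}\oplus V,\qquad V=L^2(X,\Pi_\eta,\eta)\ominus L^2(X,\Pi_{\eta_M},\eta).
\]
This is the version Corollary \ref{Elliott} needs. What is available is $\eE_\eta(f_1\,|\,\Pi_{\eta_M})=0$ (item (iii) in that proof), not $\eE_\eta(f_1\,|\,\Pi_\eta)=0$, so $f_1$ may have a nonzero component in $V$.

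Since $V\subset L^2(X,\Pi_\eta,\eta)=\bigcap_n L^2(X,S^n\F,\eta)$, it is orthogonal to every $H_n\ominus H_{n-1}$. Your wandering subspace never sees it, and $V$ is simply not treated.

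Your closing remark is exactly where this shows. If $h_\eta(S)=0$, then $\Pi_\eta=\A$ mod $\eta$ and $L^2(X,\Pi_{\eta_M},\eta)^{\perp}=V$. This space contains $f_1$; for the M\"obius flow, $\int|f_1|^2\,d\eta=\int\textrm{pr}_1\,dm=6/\pi^2$. So that case is not vacuous; it carries the whole content of the proposition. For the same reason, the contradiction sought in your first paragraph cannot arise: $\eE_\eta(f_1\,|\,s^{-1}(\B))=0$ is perfectly compatible with $\Pi_\eta=\A$.

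\textbf{What is missing.} You need a wandering-subspace decomposition of $V$, that is, a filtration under $\eta$ with top $\Pi_\eta$ and tail $\Pi_{\eta_M}$ rather than $\Pi_\eta$. The paper's route is to apply Theorem \ref{Q} to $\eta_M$, obtaining $\mathcal{E}$ with $S^{-n}\mathcal{E}\searrow\Pi_{\eta_M}$, and then to apply Theorem \ref{CFS-F} under $\eta$ with $\F=\mathcal{E}\cap\Pi_\eta$.

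If you follow that route, this step needs a genuine argument, not a citation:
\begin{enumerate}
\item The properties of $\mathcal{E}$ hold only modulo $\eta_M$-null sets, while $\eta$ may be singular to $\eta_M$.
\item The identity $\bigvee_n(S^n\mathcal{E}\cap\Pi_\eta)=\Pi_\eta$ is not a formal consequence of $\bigvee_n S^n\mathcal{E}=\A$.
\item Take the natural choice $\mathcal{E}=\sigma(\textrm{pr}_k:k\ge 0)$. By the Pinsker--Rokhlin--Sinai theorem on the tail of a finite generator, its tail is $\Pi_{\eta_M}$ under $\eta_M$ but $\Pi_\eta$ under $\eta$. Then $\F=\mathcal{E}\cap\Pi_\eta=\Pi_\eta$ mod $\eta$ is $S$-invariant, and Theorem \ref{CFS-F} yields nothing on $V$.
\end{enumerate}
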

We move now to the proof of our first main result.

\begin{proof}[{\bf{ of Theorem \ref{main}}}] By the definition of Möbius-Sarnak's process, let us 
	fix an quasi-generic invariant measure $\eta \in \mathcal{I}_S(x)$. We further have, by assumptions
	
	\smallskip
	
	\noindent (i) $\Pi_{\eta_M}(S)=s^{-1}(\mathcal{B}(Y))$ is the Pinsker $\sigma$-algebra of $\eta_M$,
	
	\smallskip

	\noindent (ii) $\Pi_{\eta}(S) \supset \Pi_{\eta_M}$, %where $\Pi_{\eta}(S)$ is the Pinsker $\sigma$-algebra of $\eta$.
	
	\smallskip
	
	\noindent{}Moreover, by the proprieties of conditional expectation, we have 
	
	\smallskip
	
	\noindent (iii) $\eE({f_1}|_{\Pi_{\eta}}|_{\Pi_{\eta_M}}) = \eE({f_1}|_{\Pi_{\eta_M}}) =  0$.
	
	\noindent Now note that $L^2(X, \A,\eta) = L^2(X,\Pi_{\eta_M},\eta)\oplus L^2(X,\Pi_{\eta_M},\eta)^{\perp}$
	and since $\Pi_{\eta_M} \subset \Pi_{\eta}$, we can write
	$L^2(X,\Pi_{\eta_M},\eta)^{\perp} = L^2(X,\Pi_{\eta},\eta)^{\perp}\oplus V$, where
	$$
	V = L^2(X,\Pi_{\eta},\eta)\circleddash L^2(X,\Pi_{\eta_M},\eta) \equiv
	\{f \in  L^2(X,\Pi_\eta,\eta), f \perp L^2(X,\Pi_{\eta_M},\eta) \}\,.
	$$
	Thus,
	$$
	L^2(X, \A),\eta) = L^2(X,\Pi_{\eta_M}(S),\eta)\oplus L^2(X,\Pi_{\eta},\eta)^{\perp}\oplus V\,.
	$$
	We are going to see that the spectral type of $U_T$ on %the second and the third factor of 
	the above decomposition is Lebesgue and this will established our first main result. %be a consequence of positivity of entropy of $\eta_M$.
	
	%Since entropy of $\eta$ is positive, it follows that the unitary operator $U_S$ acting on the second factor $L^2(X_{\A_3},\Pi_\eta(S),\eta)^\perp$ has countable Lebesgue spectrum by applying Rokhlin-Sinai's Proposition (\ref {P}).
	
	For that, we start by establishing the spectral type of $U_T$ on the third term of the decomposition $V$ is countable Lebesgue. Indeed, by appealing to  Rokhlin-Sinai's Theorem (\ref {Q}), one can construct a basis for $V$ that gives a countable Lebesgue spectrum. This can be done as follows. Since the entropy of \( \eta_M \) is positive, we can apply Rokhlin–Sinai Theorem (\ref{Q}) to the dynamical system $(X_,\A,\eta_M)$ to provide a sub-\(\sigma\)-algebra \( \mathcal{E} \) such that 
	\[
	T^{-1}\mathcal{E} \subset \mathcal{E}, \qquad 
	S^n \mathcal{E} \nearrow \A, 
	\quad \text{(i.e., } \bigcup_n T^n \mathcal{E} \text{ generates } \A, \quad \text{and}\ 
	T^{-n}\mathcal{E} \searrow \Pi{\eta_M}).
	\]
	
	Now apply Theorem \ref{CFS-F} by taking the algebra $\F$ to be $\F = \mathcal{E}\cap\Pi_\eta(S)$. Note that the hypothesis of this theorem holds and the Hilbert space $\ds \bigoplus_{n \in \Z}H_n$ of that theorem is just $L^2(X, \Pi_\eta, \eta)\ominus L^2(X,\Pi_{\eta_M}, \eta) = V$, (this follows from the fact that $T^n \mathcal{E}\cap \Pi_\eta  \nearrow \A\cap \Pi_\eta = \Pi_\eta$, and $T^{-n}\mathcal{E} \cap \Pi_\eta \searrow \Pi_{\eta_M}\cap \Pi_\eta = \Pi_{\eta_M}$. In other words if
	$$
	W = L^2(X_, \mathcal{E} \cap \Pi_\eta, \eta)
	\ominus L^2(X, T^{-1}\mathcal{E} \cap \Pi_{\eta_M}(S), \eta)\,.
	$$
	Then $W$ is infinite-dimensional and by taking $\{f_j, j \in J\}$ as an orthonormal basis of $W$, we see that  $\{U_T^n f_j, j \in J, n \in \Z\}$ is an orthonormal basis of $V = L^2(X,\Pi_\eta,\eta)\circleddash L^2(X,\Pi_{\eta_M},\eta)$, ($J$ is countable by separability of $L^2(X,\A,\eta),$ ). %, Theorem \ref{CFS-F} shows that restriction of $U_T$ to $V$ also has countable Lebesgue spectrum.\\
	
	To finish the proof, we notice that either the entropy $h_\eta(T)$ of $\eta$ is zero or strictly positive. 
	If $h_\eta(T)>0$, we apply once again the Rokhlin-Sinai procedure (Theorem \ref{CFS-F}), to see that the operator $U_T$ on $L^2(X,\Pi_{\eta},\eta)^{\perp}$ has countable Lebesgue spectrum, otherwise (i.e. $h_\eta(T)=0$), and in this case we have
	% then $ L^2(X,\Pi_{\eta},\eta)^{\perp}=\Big\{0\Big\}$ and 
	$$L^2(X, \A,\eta) = L^2(X,\Pi_{\eta_M},\eta)\oplus L^2(X,\Pi_{\eta_M},\eta)^{\perp}.
	$$
	It is suffice to apply once again the previous procedure the Rokhlin-Sinai procedure (Theorem \ref{CFS-F}) to $L^2(X,\Pi_{\eta_M},\eta)^{\perp}$. This 
	achieved the proof of the theorem.
\end{proof}
%\end{proof}
\section{Some Consequence and Applications.}\label{Appli}
In this section, we present several applications in number theory, culminating in our second main result.
\subsection{Applications to Number theory}
\medskip 
In Theorem \ref{SV}, the arithmetical function is the famous Möbius function $\bm$ defined by
\begin{equation}\label{Mobius}
	\bmu(n)= \begin{cases}
		1 {\rm {~if~}} n=1; \\
		\bml(n) {\rm {~if~}} n
		{\rm {~is~square-free~}} ; \\
		0 {\rm {~if~not.}}
	\end{cases}
\end{equation}
We recall that $n$ is square-free if $n$ has no factor in the subset
$\pr_2\setdef\big\{p^2/ p \in \pr\big\}$, where,
as customary, $\pr$ denotes the set of prime numbers.
The close allies of $\bmu$ is the Liouville function given by 
$$\bml(n)=(-1)^{|W(n)|},$$
where $|W(n)|$ is the length of the word $n$ in the alphabet of primes
(up to permutation) , that is,  $|W(n)|$ is the number of prime factors
of $n$ counted with multiplicities. $n$ is said to be a primitive word if $W(n)$ is the word of distinct primes (up to permutation), $n$ is also said to be square-free.

\smallskip
\begin{cor}\cite{AM}\label{AM20}The M\"{o}bius flow with respect to any $\eta \in \mathcal{I}(\bmu)$ has countable Lebesgue component. 
\end{cor}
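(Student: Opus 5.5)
The plan is to derive Corollary \ref{AM20} from Theorem \ref{main}. The only work is to check that the Möbius flow, with privileged point $x=\bmu$, satisfies the axioms \eqref{top}--\eqref{P} of a Möbius-Sarnak process; once this is done, $\mathcal{I}(\bmu)$ is exactly $\mathcal{I}(x)$, and Theorem \ref{main} gives a countable Lebesgue component for every $\eta\in\mathcal{I}(\bmu)$. I take $X=X_3$ to be the orbit closure of $\bmu$ in $\{-1,0,1\}^{\Z}$ (extending $\bmu$ to $\Z$ in the standard way), $T=S$ the shift, and $s(x)=(x_n^2)$ the square map onto the orbit closure $X_2$ of $\bmu^2=\1_Q$.

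I will verify the axioms in order.
\begin{enumerate}
\item Axiom \eqref{top} is immediate, since $X_3$ is a closed shift-invariant subset of a compact space.
\item For \eqref{Veech-S} and \eqref{GO}, I use Theorem \ref{SV}, the Sarnak-Veech theorem. It provides the measure $\eta_M$, its ergodicity, and the identification $\Pi_{\eta_M}(S)=s^{-1}(\B(X_2))$ of the Pinsker algebra, with $m=s^*\eta_M$ the Mirsky measure. Since $(X_2,S,m)$ has discrete spectrum, it is the relevant zero-entropy factor. The generating family is $f_n=\mathrm{pr}_1\circ S^n$, and Theorem \ref{SV} gives $\eE(\mathrm{pr}_1|\Pi_{\eta_M})=0$. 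Properness and non-atomicity of $s^{-1}(\B)$ follow because $m$ is a non-atomic measure on an infinite system and $\mathrm{pr}_1$ is not $s^{-1}(\B)$-measurable.
\item For \eqref{M}, if $\eta$ is a weak-$*$ limit of $\frac1{N_k}\sum\delta_{S^n\bmu}$, then by continuity of $s$ the measure $s^*\eta$ is the limit of $\frac1{N_k}\sum\delta_{S^n\1_Q}$. By Mirsky's theorem (genericity of $\1_Q$ for $m$), this limit is $m$.
\end{enumerate}

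The main obstacle is axiom \eqref{P}, the inclusion $\Pi_\eta\supset\Pi_{\eta_M}=s^{-1}(\B(X_2))$ for every quasi-generic $\eta$. My argument is as follows. The factor $(X_2,S,s^*\eta)=(X_2,S,m)$ has zero entropy because it has discrete spectrum; in fact it is isomorphic to a rotation on a compact abelian group (the Mirsky measure being the image of Haar measure on $\prod_p\Z/p^2\Z$). Hence $s^{-1}(\B(X_2))$ generates, under $\eta$, a zero-entropy invariant factor. By maximality of the Pinsker algebra it is contained in $\Pi_\eta$.

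The delicate point is that $s^{-1}(\B)$ here is viewed mod $\eta$ rather than mod $\eta_M$. The inclusion therefore has to be read as an identity of $\sigma$-algebras on $X_3$ (mod each measure), which is exactly how it is used in the proof of Theorem \ref{main}. With all axioms checked, Theorem \ref{main} applies and yields the corollary.
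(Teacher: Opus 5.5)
Your proposal is correct and follows the paper's route: the paper gives no separate argument and obtains the corollary by regarding the Möbius flow, with privileged point $\bmu$, as a Möbius--Sarnak process and invoking Theorem~\ref{main}. Your explicit checks of axioms \eqref{M} and \eqref{P} supply details the paper leaves implicit: for \eqref{M}, continuity of $s$ together with Mirsky's genericity of $\1_Q$; for \eqref{P}, zero entropy of the Mirsky factor together with maximality of the Pinsker algebra.

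One adjustment is needed: take the ambient space to be the admissible subshift $X_{\A_3}$ rather than the orbit closure of $\bmu$. The measure $\eta_M$ charges every admissible cylinder, so it is carried by the orbit closure only if every admissible block occurs in $\bmu$, which is not known unconditionally. With $X=X_{\A_3}$ and privileged point $\bmu$, nothing about $\mathcal{I}(\bmu)$ or $L^2(\eta)$ changes.
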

We recall that the Möbius flow is raised by $\bmu$, that is, $X$ the closure of the orbit of $\bmu$ under the shift map. We further have
\begin{cor}\label{Elliott}\cite{AM}Any potential spectral measure of the M\"{o}bius function is absolutely continuous with respect te Lebesgue measure. 
\end{cor}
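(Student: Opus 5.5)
The plan is to realize every potential spectral measure of $\bmu$ as the spectral measure of a single function in the Möbius flow. Then Proposition \ref{abscont}, together with the conditional expectation identity $\eE(f_1|_{\Pi_{\eta_M}})=0$, forces that function to lie in the orthocomplement of the Pinsker factor.

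First I would fix a subsequence $(N_r)$ along which all the correlations
$$F(k)=\lim_{r\to\infty}\frac{1}{N_r}\sum_{n=0}^{N_r-1}\bmu(n+k)\bmu(n)$$
exist. These define the potential spectral measure $\sigma_{\bmu,(N_r)}$ through the Herglotz--Bochner construction recalled in Subsection \ref{Spec}. Passing to a further subsequence, the empirical measures $\frac{1}{N_r}\sum_{n<N_r}\delta_{S^n\bmu}$ converge weak-$*$ to some $\eta\in\mathcal{I}(\bmu)$. Let $\mathrm{pr}_1(y)=y_0$ be the continuous coordinate function on $X_3$. By weak-$*$ convergence applied to the continuous function $(\mathrm{pr}_1\circ S^k)\cdot\mathrm{pr}_1$, we get
$$F(k)=\int \mathrm{pr}_1\circ S^k\cdot \overline{\mathrm{pr}_1}\,d\eta=\widehat{\sigma_{\mathrm{pr}_1,\eta}}(k).$$
So the potential spectral measure coincides with the spectral measure of $\mathrm{pr}_1$ in $L^2(X_3,\eta)$.

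Second, I would show that $\mathrm{pr}_1\perp L^2(X_3,\Pi_{\eta_M},\eta)$. The $\sigma$-algebra $\Pi_{\eta_M}=s^{-1}(\B(X_2))$ is generated by the squared coordinates. It therefore suffices to check that $\int \mathrm{pr}_1\cdot g\circ s\,d\eta=0$ for continuous $g$ on $X_2$, or even just for cylinder functions $g$. Each such integral is a limit of averages $\frac{1}{N_r}\sum_{n}\bmu(n)\,g(S^n\bmu^2)$. I expect this to be the main obstacle. The identity $\eE(\mathrm{pr}_1|_{\Pi_{\eta_M}})=0$ is given only for $\eta_M$, not for an arbitrary quasi-generic $\eta$. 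Transferring it requires property \ref{M}, namely $s^*\eta=m$, and the sign-symmetry of $\eta$ relative to the square factor.

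To carry out this step, I would try first to verify directly, via the Chinese remainder theorem and the Mirsky-type counting underlying Theorem \ref{SV}, that these averages vanish. Concretely, for cylinder $g$ the average $\frac{1}{N}\sum_n\bmu(n)\1_{Q}(n+a_1)\cdots\1_Q(n+a_j)$ decomposes over residue classes modulo $\prod p^2$. Its vanishing then reduces to a Davenport-type cancellation of $\bmu$ in arithmetic progressions, which is unconditional. If that direct route is too heavy, I would fall back on assumption \ref{P}, $\Pi_\eta\supset\Pi_{\eta_M}$, combined with the decomposition used in the proof of Theorem \ref{main}.

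Granting orthogonality, $\mathrm{pr}_1$ lies in $L^2(\Pi_{\eta_M})^\perp=L^2(\Pi_\eta)^\perp\oplus V$. The proof of Theorem \ref{main} shows that $U_S$ has Lebesgue spectrum on both summands: by Rokhlin--Sinai (Theorem \ref{CFS-F}) on $L^2(\Pi_\eta)^\perp$, and through the basis $\{U^n_Sf_j\}$ on $V$. This is exactly the content of Proposition \ref{abscont}. Hence $\sigma_{\mathrm{pr}_1,\eta}\ll$ Lebesgue. Since the subsequence $(N_r)$ was arbitrary, every potential spectral measure of $\bmu$ is absolutely continuous.
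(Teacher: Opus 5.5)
Your first two steps are sound. Your direct route in the second step goes further than the paper. Approximate $\bmu^2$ by the periodic $\1_{R_K}$; since $\bmu^2\le\1_{R_K}$, the density error is at most $\sum_{p>p_K}p^{-2}$. Then use cancellation of $\bmu$ in arithmetic progressions. This genuinely proves $\eE_\eta(\mathrm{pr}_1|s^{-1}(\B))=0$ for every quasi-generic $\eta$, whereas item (iii) in the proof of Theorem~\ref{main} only transports that identity from $\eta_M$ to $\eta$. Your fallback via property~\ref{P} would not work, because the inclusion $\Pi_\eta\supset\Pi_{\eta_M}$ says nothing about conditional expectations under $\eta$.

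\textbf{The gap is in the last step.} Orthogonality to $L^2(X,s^{-1}(\B),\eta)$ only places $\mathrm{pr}_1$ in $L^2(X,\Pi_\eta,\eta)^\perp\oplus V$, and Rokhlin--Sinai controls only the first summand. Proposition~\ref{abscont} is safe only when $\Pi(S)$ is the Pinsker algebra of $\eta$. Read that way it is the classical fact, and it needs $\mathrm{pr}_1\perp L^2(X,\Pi_\eta,\eta)$, which you have not shown. The component of $\mathrm{pr}_1$ in $V$ lives in the zero-entropy Pinsker factor of $\eta$. If $h_\eta(S)=0$, which nothing excludes, that component is all of $\mathrm{pr}_1$.

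The argument for $V$ that you import from the proof of Theorem~\ref{main} fails for three reasons:
\begin{enumerate}[(a)]
\item The $\sigma$-algebra $\mathcal E$ comes from Theorem~\ref{Q} applied to $\eta_M$. So $\bigvee_n T^n\mathcal E=\A$ and $\bigcap_n T^{-n}\mathcal E=\Pi_{\eta_M}$ hold only modulo $\eta_M$-null sets, while an ergodic quasi-generic $\eta\neq\eta_M$ is mutually singular with $\eta_M$.
\item Even for a single fixed measure, $\bigvee_n(T^n\mathcal E\cap\Pi_\eta)$ need not equal $(\bigvee_n T^n\mathcal E)\cap\Pi_\eta$, because intersection does not distribute over joins of $\sigma$-algebras.
\item Nothing shows $T^{-1}(\mathcal E\cap\Pi_\eta)\subsetneq\mathcal E\cap\Pi_\eta$. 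If equality holds, then $W=\{0\}$ and Theorem~\ref{CFS-F} yields nothing.
\end{enumerate}

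This cannot be repaired by $\sigma$-algebra bookkeeping. Suppose $\sigma_{\mathrm{pr}_1,\eta}$ were absolutely continuous for every quasi-generic $\eta$. Take any topological system $(Y,T)$ all of whose invariant measures have singular spectrum, and any subsequence along which $(S^n\bmu,T^ny)$ equidistributes to a joining $\lambda$. The cyclic spaces of $\mathrm{pr}_1\otimes 1$ and $1\otimes f$ in $L^2(\lambda)$ would then be orthogonal, so $\frac{1}{N_r}\sum_{n<N_r}\bmu(n)\overline{f(T^ny)}\to\int\mathrm{pr}_1\otimes\overline{f}\,d\lambda=0$. That is M\"obius disjointness for every such system, all rigid systems included. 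The missing ingredient is therefore a genuine arithmetic input controlling the correlation of $\bmu$ with the zero-entropy part $V$ of each quasi-generic measure. Neither your proposal nor the paper's derivation from Proposition~\ref{abscont} supplies it.
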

It is conjectured that the spectral measure of $\bmu$ is Lebesgue measure \cite{Eli}. As far as the author known, this conjecture still open. Corollary \ref{Elliott} has many consequences in number theory related to Matömaki-Radziwi{\l}{\l}-Tao's theorems \cite{MRT} (see \cite{AM} for more details.). According to Corollary \ref{Elliott} combined with the first main result in \cite{AD}, we conjecture the following
\begin{eqc}The densities $\rho$ of the all  potential spectral measure of the Möbius function $\bmu$ are $\log$-integrable, that is, $\log|\rho|$ is integrable. Therefore,  all  potential spectral measure of the Möbius function $\bmu$  are equivalent to Lebesgue measure.  
\end{eqc}

\noindent{}We further have
\begin{cor}\label{rigid}The M\"{o}bius disjointness conjecture holds for any rigid transformation. 
\end{cor}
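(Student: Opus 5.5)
The plan is to reduce Corollary \ref{rigid} to the spectral statements above, namely Corollary \ref{AM20}, Proposition \ref{abscont} and Corollary \ref{Elliott}, together with the fact that a rigid system has singular spectrum. Let $(Z,R)$ be a topological dynamical system, $z\in Z$ and $g\in C(Z)$; the goal is
$$\frac1N\sum_{n=1}^N g(R^nz)\bmu(n)\to 0.$$
Suppose not. Passing to a subsequence $N_r$, the empirical measures of $(R^nz,S^n\bmu)$ on $Z\times X$ converge to a joining $\lambda$ of some $R$-invariant measure $\kappa$ and some $\eta\in\mathcal{I}(\bmu)$. In the limit,
$$\int g\otimes \pi_0\,d\lambda\neq0,$$
where $\pi_0(y)=y_0$ is the coordinate projection. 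Rigidity of $R$ will be understood as rigidity of every $R$-invariant measure, i.e.\ $U_R^{q_k}\to \mathrm{Id}$ strongly along some sequence. Through the Herglotz representation this gives $\widehat{\sigma_g}(q_k)\to\widehat{\sigma_g}(0)$, and the Riemann--Lebesgue lemma then shows that $\sigma_g$ has no absolutely continuous part. Hence $\sigma_g\perp \mathrm{Leb}$ for every $g\in L^2(\kappa)$.

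Decompose $\pi_0=\eE(\pi_0|\Pi_\eta)+(\pi_0-\eE(\pi_0|\Pi_\eta))$ in $L^2(\eta)$. By Proposition \ref{abscont}, the second term has absolutely continuous spectral measure. For the first term, condition (iii) in the proof of Theorem \ref{main} gives $\eE(\pi_0|\Pi_{\eta_M})=0$, so $\eE(\pi_0|\Pi_\eta)\in V$. The proof of Theorem \ref{main} shows that $V$ carries Lebesgue spectrum, so this term is absolutely continuous as well. It follows that $\pi_0$ lies in the absolutely continuous part of $L^2(\eta)$.

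Next use the standard disjointness fact: if $\lambda$ is a joining of two systems, $u\in L^2(\kappa)$ has spectral measure singular to $\mathrm{Leb}$ and $v\in L^2(\eta)$ has absolutely continuous spectral measure, then $\int u\otimes v\,d\lambda=0$. The proof is short. Let $P$ be the orthogonal projection in $L^2(\lambda)$ onto the closed $R\times S$-invariant subspace generated by $u\otimes 1$; every element of it has spectral measure $\ll\sigma_u$. Since $\sigma_u\perp\sigma_v$, the cyclic spaces of $u\otimes1$ and $1\otimes v$ are orthogonal. Applying this with $u=g$, $v=\pi_0$ gives $\int g\otimes\pi_0\,d\lambda=0$, which is a contradiction, so the averages tend to $0$.

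The main obstacle is the first step: matching the hypothesis ``rigid transformation'' with what is actually needed, namely singular spectrum of every invariant measure arising as a marginal $\kappa$. For a topological system this means either assuming rigidity for all invariant measures, or first reducing to uniquely ergodic rigid systems. If necessary, I would state the result in the form ``Sarnak's conjecture holds for any topological system all of whose invariant measures have singular spectrum'', as the abstract does, and note that rigidity implies this hypothesis.
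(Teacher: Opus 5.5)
The paper gives no separate proof of this corollary. It is meant to follow from three ingredients:
\begin{itemize}
\item Corollary~\ref{Elliott}: all subsequential spectral measures of $\bmu$ are absolutely continuous. This rests on the decomposition $L^2(\eta)=L^2(\Pi_{\eta_M})\oplus L^2(\Pi_\eta)^{\perp}\oplus V$ from the proof of Theorem~\ref{main}.
\item Rigid systems have singular spectrum.
\item The final orthogonality comes from the Bellow--Losert affinity bound (Corollary~\ref{BeL}), as for Corollary~\ref{AML26}.
\end{itemize}
Your proposal follows the same scheme. The only change is that you replace the affinity bound by a joining plus orthogonality of cyclic subspaces with mutually singular spectral types. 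That replacement is sound, and it spares you the supremum over independent subsequences, since the joining pairs $\kappa$ with one specific $\eta$. Your Riemann--Lebesgue argument for rigidity is also sound. So is your use of Proposition~\ref{abscont}, which on $L^2(X,\Pi_\eta,\eta)^{\perp}$ is the classical Rokhlin--Sinai theorem.

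Even (iii) holds under $\eta$, though not for the reason the paper gives. The hypothesis $\eE(f_1|\Pi_{\eta_M})=0$ is a statement about $\eta_M$, and $\eta$ may be singular to $\eta_M$. The correct reason is this: for continuous $F$ on $X_{\A_2}$, the sequence $n\mapsto F(S^n\bmu^2)$ is Besicovitch almost periodic, and $\bmu$ is orthogonal to such sequences by Davenport's estimate.

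The genuine gap is the sentence ``the proof of Theorem~\ref{main} shows that $V$ carries Lebesgue spectrum''. It carries the whole weight of your argument, because $\eE_\eta(\pi_0|\Pi_\eta)$ is exactly the component of $\pi_0$ in $V$. That argument cannot work.
\begin{itemize}
\item $V\subset L^2(X,\Pi_\eta,\eta)$, and on the Pinsker factor every $\F\subset\Pi_\eta$ with $S^{-1}\F\subset\F$ satisfies $S^{-1}\F=\F$ mod $\eta$.
\item Indeed, suppose not. Then some $P=\{A,A^c\}\subset\F$ has $H_\eta(P|S^{-1}\F)>0$. Since $\bigvee_{n\geq1}S^{-n}P\subset S^{-1}\F$, one gets $h_\eta(S,P)=H_\eta\bigl(P\,\big|\,\bigvee_{n\geq1}S^{-n}P\bigr)\geq H_\eta(P|S^{-1}\F)>0$. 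This is impossible for $P\subset\Pi_\eta$.
\item Hence for $\F=\mathcal{E}\cap\Pi_\eta$ the hypothesis $\F\varsubsetneq S\F$ of Theorem~\ref{CFS-F} fails. Every $H_n\ominus H_{n-1}$ is $\{0\}$, and nothing is produced inside $V$.
\item Moreover, $\mathcal{E}$ comes from Theorem~\ref{Q} applied to $\eta_M$, so its generating and tail properties hold only mod $\eta_M$, not mod $\eta$.
\end{itemize}
The entropy-to-Lebesgue mechanism is unavailable precisely on the Pinsker factor, which is where the hard part of $\pi_0$ lives. You would need an independent proof that $\eE_\eta(\pi_0|\Pi_\eta)$ has absolutely continuous spectral measure for every quasi-generic $\eta$. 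Without it, the only way to dispose of this term is to show $\eE_\eta(\pi_0|\Pi_\eta)=0$. That is Veech's form of Sarnak's conjecture, so the reduction becomes circular.

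The obstacle you single out, namely what ``rigid'' means for a topological system, is minor. Assuming singular spectrum for every invariant measure, as in the abstract and Corollary~\ref{AML26}, settles it. The missing step is the one you import from the proof of Theorem~\ref{main}. As written, your argument, like the paper's, proves the corollary only conditionally on the absolute continuity of all subsequential spectral measures of $\bmu$. As far as I know, that property is not known unconditionally; it would follow, e.g., from the two-point Chowla conjecture.
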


\noindent{}The following extends Corollary \ref{rigid}.
\begin{cor}\label{AML26}For any operator $T$ on a Banach space with singular spectrum and any Möbius-Sarnak process with privilege point $x$, we have
	$$\frac{1}{N}\sum_{n=1}^{N}\omega_n a_n(\phi,T,v,(n_r)) \tend{N}{+\infty}{}0,$$
	where $\omega_n=f_1(T^nx)$, for any $n \in \N$.
\end{cor}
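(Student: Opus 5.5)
The approach is spectral disjointness: the sequence $(\omega_n)$ will be seen to have absolutely continuous spectral measures, the sequence $(a_n)$ singular ones, and an averaged correlation between two such sequences must vanish. We argue by contradiction along subsequences. Fix $\phi\in B'$, $v\in B$, and write $a_n=\phi(T^nv)$, a bounded sequence since $|a_n|\le \|\phi\|\,\|T\|^n\|v\|$. (If $T$ is not power bounded, we restrict to the setting in which the correlations of $(a_n)$ are defined, as the definition of $\sigma_T$ implicitly requires.) Suppose the averages $\frac1N\sum_{n\le N}\omega_n a_n$ do not tend to $0$. Then there is a subsequence $(N_r)$ along which they converge to some $c\neq 0$. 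By a diagonal argument we refine $(N_r)$ so that, simultaneously, the following limits all exist for every $k\in\Z$: the correlations of $(\omega_n)$, the correlations of $(a_n)$, and the joint correlations $\frac1{N_r}\sum \omega_{n+k}\overline{a_n}$ and $\frac1{N_r}\sum a_{n+k}\overline{\omega_n}$. Equivalently, we pass to a quasi-generic point for the pair sequence $(\omega_n,a_n)$ in the compact space $(\mathbb{D}_R)^{\Z}$.

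Along this subsequence the empirical measures of $x$ converge to some $\eta\in\mathcal I(x)$. Because $\omega_n=f_1(T^nx)$, with $f_1$ continuous (in the Möbius case it is the first coordinate projection), the correlations of $(\omega_n)$ along $(N_r)$ are $\int f_1\circ T^k\,\overline{f_1}\,d\eta$. Hence the spectral measure of $\omega$ along $(N_r)$ equals $\sigma_{f_1,\eta}$. We now show that $f_1\perp L^2(X,\Pi_\eta,\eta)$. This follows from the property that $s^*(\eta)=m$, combined with the fact, already used in the proof of Theorem~\ref{main}, that $\eE(f_1\mid\Pi_{\eta_M})=0$, and the inclusion $\Pi_\eta\supset\Pi_{\eta_M}$. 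Proposition~\ref{abscont} then gives that $\sigma_{f_1,\eta}$ is absolutely continuous. On the other side, the spectral measure $\sigma_{T,\phi,v,(N_r)}$ of $(a_n)$ along $(N_r)$ is bounded by $\sigma_T$, which is singular by hypothesis.

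The conclusion is the standard disjointness argument, run in the joining picture. Let $\lambda$ be the limiting shift-invariant measure on pairs, and let $F,G$ be the two coordinate functions at time $0$. Then $c=\int F\,G\,d\lambda$. The spectral measures $\sigma_F=\sigma_{f_1,\eta}$ (absolutely continuous) and $\sigma_G$ (singular) are mutually singular. Mutually singular spectral measures force the cyclic subspaces $Z(F)$ and $Z(\overline G)$ of $L^2(\lambda)$ to be orthogonal, so $\langle F,\overline G\rangle=0$. This gives $c=0$, a contradiction.

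The step I expect to be the main obstacle is verifying $f_1\perp L^2(\Pi_\eta)$ for an arbitrary quasi-generic $\eta$. The hypothesis in the paper gives $\eE(f_1\mid\Pi_{\eta_M})=0$ only with respect to $\eta_M$. I would first try to transfer it through the Kronecker factor, using $s^*(\eta)=m$ and property~\ref{P}. Failing that, I would fall back on the weaker statement that the component of $f_1$ in $L^2(\Pi_{\eta_M})$ vanishes. Its remaining part in $V$ then still has Lebesgue spectrum, by the construction in the proof of Theorem~\ref{main}, so the absolute continuity needed for the disjointness argument survives either way.
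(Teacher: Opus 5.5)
The disjointness mechanism you use is sound. The step where you assert $f_1\perp L^2(X,\Pi_\eta,\eta)$ is wrong, however, and so is the appeal to Proposition~\ref{abscont} alone.

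\textbf{The faulty step.} The inclusion $\Pi_\eta\supset\Pi_{\eta_M}$ points the wrong way: vanishing of the conditional expectation on the smaller $\sigma$-algebra says nothing about the larger one. If $h_\eta(S)=0$, a case the proof of Theorem~\ref{main} explicitly allows, then $\Pi_\eta$ is the whole $\sigma$-algebra and your claim would force $f_1=0$. Even when $h_\eta(S)>0$, $f_1$ may have a nonzero component in $V=L^2(X,\Pi_\eta,\eta)\ominus L^2(X,\Pi_{\eta_M},\eta)$, which is exactly why the paper splits off $V$.

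\textbf{The repair.} Your fallback is the correct argument and should replace the first attempt. Use $\eE_\eta(f_1\mid\Pi_{\eta_M})=0$, item (iii) in the proof of Theorem~\ref{main}, so that $f_1\in L^2(X,\Pi_\eta,\eta)^{\perp}\oplus V$ (not just $V$). These are orthogonal invariant subspaces, so $\sigma_{f_1,\eta}$ is the sum of the spectral measures of the two components. The first is absolutely continuous by Proposition~\ref{abscont}, the second by the Rokhlin--Sinai construction on $V$. This is how Corollary~\ref{Elliott} comes out of Theorem~\ref{main}, and in the M\"{o}bius case you can simply cite that corollary.

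Note that the identity must hold with respect to $\eta$. It does not follow from $s^*(\eta)=m$ alone, since two measures with the same projection $m$ can correlate $f_1$ with the factor differently; the paper just takes it as given. For the M\"{o}bius flow it does hold for every quasi-generic $\eta$. For a cylinder function $g$, $\int \textrm{pr}_1\cdot(g\circ s)\,d\eta$ is a limit of averages of $\bmu$ against finite products of shifts of $\bmu^2$ and $1-\bmu^2$. Such products are Besicovitch-approximable by periodic sequences (Rauzy's decomposition), and $\bmu$ has mean zero along every arithmetic progression, so these averages vanish.

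\textbf{Comparison with the paper.} With that repair your proof is correct, and it differs from the paper's in the disjointness step. The paper's proof is a one-line appeal to Corollary~\ref{BeL}: the averages are bounded by the affinity between spectral measures of $(\omega_n)$ and $(\overline{a_n})$ along subsequences. That affinity is $0$ because one measure is absolutely continuous (Corollary~\ref{Elliott}) and the other singular.

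You instead extract a common subsequence, build a joining $\lambda$ of the two processes, and use that mutually singular spectral measures give orthogonal cyclic subspaces. Note that $\sigma_{\overline G}$ is the image of $\sigma_G$ under $z\mapsto\overline z$, hence still singular.

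\begin{itemize}
\item Your route needs only the spectral theorem for a unitary operator. It automatically uses a single subsequence for both sequences, whereas Corollary~\ref{BeL} takes a supremum over independent subsequences (harmless here).
\item The affinity route avoids constructing a joint limit object. It works directly with the finite-$N$ periodogram measures via the upper semicontinuity in Theorem~\ref{coquet-france}. It also gives a quantitative bound when the spectral measures are not mutually singular.
\end{itemize}

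The two are really one principle. The Hilbert-space estimate behind your orthogonality step, $|\langle F,\overline G\rangle|\le\int\sqrt{\frac{d\sigma_F}{d\rho}\,\frac{d\sigma_{\overline G}}{d\rho}}\,d\rho$ with $\rho=\sigma_F+\sigma_{\overline G}$, is the limiting form of the affinity bound.
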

\smallskip

By considering the second definition of the spectral type of an operator, Corollary \ref{AML26} can be stated as follows
\begin{cor}\label{AML27}For any operator $T$ on a Banach space with singular spectrum and any Möbius-Sarnak process with privilege point $x$, for any continuous function $F$ on $B_{X^*}$ and any $x \in X$ we have 
	$$\frac{1}{N}\sum_{n=1}^{N}\omega_n .f_x(T^*(\phi)= 
	\frac{1}{N}\sum_{n=1}^{N}\omega_n .f_x(T \circ T) \tend{N}{+\infty}{}0,$$
	where $\omega_n=f_1(T^nx)$, for any $n \in \N$.
\end{cor}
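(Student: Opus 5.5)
The plan is to transfer the averages to a spectral pairing and then use the mutual singularity of absolutely continuous and singular measures. Write $a_n=a_n(\phi,T,v)=\phi(T^n v)$ and $\omega_n=f_1(S^n x)$. Both sequences are bounded; for $a_n$ we assume, as implicit in the definition of $\sigma_T$, that $T$ is power bounded. Argue by contradiction. If the averages $\frac1N\sum_{n=1}^N\omega_n a_n$ do not tend to $0$, pass to a subsequence $(N_r)$ along which they tend to some $c\neq 0$. By a diagonal extraction, also arrange that along $(N_r)$ the following limits exist for every $k\in\Z$: the autocorrelations of $(a_n)$, of $(\omega_n)$, and the cross-correlations $\frac1{N_r}\sum_n\omega_{n+k}\overline{a_n}$. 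Then arrange that $\frac1{N_r}\sum_{n\le N_r}\delta_{S^nx}$ converges weak-$*$ to some $\eta\in\mathcal{I}(x)$.

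Next, realise the joint behaviour as a stationary process. Consider the pair sequence $(\omega_n,a_n)$ as a point in a product of compact sets of sequences, under the shift. Along $(N_r)$, after a further extraction, its empirical measures converge to a shift-invariant measure $\lambda$ on the orbit closure. Let $F_1$ and $F_2$ be the two zeroth-coordinate functions. Then $\lambda$ is a joining of:
\begin{itemize}
\item a system whose first marginal is $(X,S,\eta)$, with $F_1$ playing the role of $f_1$ (modulo the continuity of $f_1$, which holds for $\mathrm{pr}_1$ in the M\"obius case);
\item a system carrying $F_2$, whose spectral measure $\sigma_{F_2}$ equals $\sigma_{T,\phi,v,(N_r)}$ and is therefore singular by hypothesis.
\end{itemize}
In $L^2(\lambda)$ we then have $c=\langle F_1,\overline{F_2}\rangle$. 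The key structural input is Proposition \ref{abscont} together with property (iii) in the proof of Theorem \ref{main}. Since $\eE(f_1|\Pi_{\eta_M})=0$ and $\Pi_{\eta_M}\subset\Pi_\eta$, decompose $f_1=g+h$ with $h\in L^2(\Pi_\eta)^\perp$ and $g\in V$. Both pieces have absolutely continuous (indeed Lebesgue) spectral measure by the countable Lebesgue structure obtained in Theorem \ref{main}. Hence $\sigma_{F_1}\ll$ Lebesgue.

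Now conclude via orthogonality of cyclic spaces. In $L^2(\lambda)$, the cyclic space $C(F_1)$ has absolutely continuous type and $C(\overline{F_2})$ has singular type. Cyclic subspaces of one unitary operator with mutually singular spectral types are orthogonal, since the projection of one onto the other has a spectral measure dominated by both. Therefore $\langle F_1,\overline{F_2}\rangle=0$, so $c=0$, a contradiction.

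The main obstacle is the step asserting that $\sigma_{F_1}$ computed in the joining is absolutely continuous. This requires that the property $\eE(f_1|\Pi_{\eta_M})=0$, together with the Pinsker inclusion (property \ref{P}), survives for the relevant marginal $\eta$. It also requires care with the Pinsker algebra of the joined system: one must check that $F_1$ remains orthogonal to the Pinsker factor of $\lambda$. I would handle this by using that the correlations of $F_1$ depend only on the marginal $\eta$, so $\sigma_{F_1}$ computed in $\lambda$ equals $\sigma_{f_1}$ computed in $(X,S,\eta)$. This reduces the claim to Proposition \ref{abscont} and avoids any Pinsker-algebra statement about the joining itself.
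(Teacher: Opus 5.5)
Your argument is correct relative to the inputs the paper itself relies on, but it takes a genuinely different route. The paper proves Corollary~\ref{AML26}, and treats Corollary~\ref{AML27} as a restatement of it, purely at the level of the two scalar sequences. By the Bellow--Losert bound of Corollary~\ref{BeL}, which rests on Parseval, Cauchy--Schwarz and the upper semicontinuity of affinity in Theorem~\ref{coquet-france}, the $\limsup$ of $|\frac{1}{N}\sum_{n\le N}\omega_n a_n|$ is controlled by the affinity between subsequential spectral measures of $(\omega_n)$ and of $(a_n)$. That affinity is $0$ because the former are absolutely continuous (Proposition~\ref{abscont}, Corollary~\ref{Elliott}) and the latter are singular. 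You instead build a joint stationary model $\lambda$ and use that cyclic subspaces of one unitary operator with mutually singular types are orthogonal.

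The two approaches buy different things. The affinity route needs no joint model for the orthogonality step, and it is quantitative: it bounds the $\limsup$ even when the types are not disjoint. Your route identifies the limit exactly as $\int F_1F_2\,d\lambda$. It is also the natural one for the second definition actually used in Corollary~\ref{AML27}:
\begin{itemize}
\item take $\lambda$ to be a limit point of $\frac{1}{N_r}\sum_{n\le N_r}\delta_{(S^nx,(T^*)^n\phi)}$ on $X\times B_{X^*}$; this presupposes $\|T\|\le 1$, so that $T^*$ preserves $B_{X^*}$;
\item take $F_2=F$ for an arbitrary continuous $F$ on $B_{X^*}$;
\item then singularity of $\sigma_{F_2}$ is literally the hypothesis that $\sigma_{T^*,\nu^*}$ is singular for the second marginal $\nu^*$.
\end{itemize}
As written, you invoke the first-definition hypothesis on $\sigma_{T,\phi,v,(N_r)}$. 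That is harmless, since this measure is the spectral measure of the weak-$*$ continuous function $f_v$ under such a $\nu^*$, but you should say so. For a general continuous $F$, run the same argument with $a_n=F((T^*)^n\phi)$.

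In both routes the load-bearing input is the same: every subsequential spectral measure of $(\omega_n)$ is absolutely continuous. Your derivation of it, splitting $f_1$ via (iii) and Theorem~\ref{main}, reproduces the paper's derivation. In particular it uses $\eE_\eta(f_1\mid\Pi_{\eta_M})=0$ for the quasi-generic $\eta$, whereas the axioms state this only for $\eta_M$. Your closing remark settles the Pinsker question for the joining, but not this one. For $\bmu$ this identity can be checked directly. It reduces to $\frac{1}{N}\sum_{n\le N}\bmu(n)\prod_i\bmu^2(n+h_i)\to 0$, which follows from the prime number theorem in arithmetic progressions together with the Besicovitch approximation of $\bmu^2$ by $\1_{R_N}$ used in Section~\ref{TopoM}.
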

Corollary \ref{AML26} generalizes, in the spirit of Below-Losert,  Proposition 3.10 from \cite{ALV}  due to M. Lin and the author \footnote{Therein (that is, in \cite{ALV}), among other results,  the authors proved that Veech's conjecture and Sarnak's conjecture are equivalent.}. For its proof, one need the Below-Losert's machinery which is based on the so-called affinity or geometric mean (for statisticians, it is also called Bhattacharyya coefficient). For all details about the use of this machinery, we refer to \cite{BL}, \cite{AM}.  The details of the proof are omitted and left to the reader. However, for convenience, the fundamental ingredients of the proof are recalled below; we refer to \cite{BL}, \cite{AM}, \cite{AD} for more details on the algorithm.

%The details of the proof are omitted and leave to the reader. However, for the convenience of the reader, some material from \cite{AM} is recalled below
\paragraph{Geometric mean and correlations.}\label{GM-CKMF}
Let $\mathcal{M}_1(\T)$ be a set of probability measures on
the circle $\T$ and $\eta,\nu \in \M_1(\T)$ two measures in this space. Then, there exists a probability
measure $\lambda$ such that $\eta$ and $\nu$ are absolutely continuous with respect to $\lambda$,
(take for example $\lambda=\frac{\eta+\nu}{2}$). Then the {\em geometric mean } or {\em affinity} between $\eta$ and
$\nu$ is defined by
\begin{equation}\label{affinity}
	G(\eta,\nu)=\ds \int \sqrt{\frac{d\eta}{d\lambda}. \frac{d\nu}{d\lambda}} d\lambda.
\end{equation}
This definition does not depend on $\lambda$. The geometric mean is related to the Hellinger distance which is 
defined as
\[
H(\eta,\nu)=\sqrt{2(1-G(\eta,\nu))}.
\]
As a consequence of Cauchy-Schwarz inequality we have the following,
\[
0 \leq G(\eta,\nu) \leq  1.
\]
\begin{rem}\
	\begin{enumerate}[(i)]
		\item The definition of geometric mean can be extended to any pair of positive finite measures by normalizing them.
		
		\item It is an easy exercise to see that $G(\eta,\nu)=0$ if and only if $\eta$ and $\nu$
		are mutually singular (denoted by $\eta \bot \nu$):
		this means that there exists a pair of disjoint Borel sets $A$ and $B$ such that $\eta$ is concentrated on $A$ and
		$\nu$ on $B$, (N.B. we recall that a measure $\rho$ is concentrated on a Borel set $E$ if
		$\rho(F)=0$ if and only if $ F \cap E= \emptyset$). Similarly, $G(\eta,\nu)=1$ holds if
		and only if $\eta$ and $\nu$ are equivalent. %$\eta\ll \nu$ and $\nu\ll \mu$.
		Affinity can be used to compare sequences of measures via the
		following theorem. 
	\end{enumerate}
\end{rem}

\begin{thm}[Coquet-Kamae-Mand\`es-France \cite{CMFK}] \label{coquet-france}
	Let $(P_n)$ and $(Q_n)$ be two sequences of probability measures
	on the circle, weakly converging to the probability measures $P$ and $Q$
	respectively. Then
	\begin{equation}\label{limsup}
		\limsup_{n \longrightarrow +\infty } G(P_n,Q_n) \leq G(P,Q).
	\end{equation}
\end{thm}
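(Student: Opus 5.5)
This is the Coquet--Kamae--Mendès France upper semicontinuity of the affinity. I would derive it from a variational formula for $G$ in terms of continuous test functions, because weak convergence only controls integrals of continuous functions.

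\emph{Step 1: variational formula.} I would first show that for probability measures $\eta,\nu$ on $\T$,
$$G(\eta,\nu)=\inf\Big\{\Big(\int f\,d\eta\Big)^{1/2}\Big(\int \tfrac{1}{f}\,d\nu\Big)^{1/2} : f\in C(\T),\ f>0\Big\}.$$
For the inequality $\le$, fix a dominating measure $\lambda$ and write $a=d\eta/d\lambda$, $b=d\nu/d\lambda$. Cauchy--Schwarz gives
$$\int\sqrt{ab}\,d\lambda=\int \sqrt{fa}\,\sqrt{b/f}\,d\lambda\le\Big(\int f\,d\eta\Big)^{1/2}\Big(\int f^{-1}d\nu\Big)^{1/2}.$$
The reverse inequality needs a near-optimal $f$. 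Formally the optimum is $f=\sqrt{b/a}$, which is only measurable and may vanish or be infinite. I would handle this with the following approximations.
\begin{itemize}
\item Truncate: use $f_\varepsilon=\min(\max(\sqrt{b/a},\varepsilon),1/\varepsilon)$, with the convention $f=1/\varepsilon$ on the part of $\nu$ singular to $\eta$ and $f=\varepsilon$ on the part of $\eta$ singular to $\nu$. A direct computation shows that the product of the two integrals tends to $G(\eta,\nu)^2$ as $\varepsilon\to0$.
\item Approximate $f_\varepsilon$ by continuous functions with values in $[\varepsilon,1/\varepsilon]$, via Lusin's theorem in $L^1(\eta+\nu)$. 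Both $f$ and $1/f$ are then controlled, since they are bounded on this range.
\end{itemize}
This approximation step is the main technical obstacle: one must handle the singular parts and the zero sets carefully so that the error in the product of the integrals really goes to $0$.

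\emph{Step 2: conclude.} Fix $\delta>0$ and choose a continuous $f>0$ such that
$$\Big(\int f\,dP\Big)^{1/2}\Big(\int f^{-1}dQ\Big)^{1/2}<G(P,Q)+\delta.$$
By the easy inequality $\le$ of Step 1, applied to $P_n,Q_n$,
$$G(P_n,Q_n)\le\Big(\int f\,dP_n\Big)^{1/2}\Big(\int f^{-1}dQ_n\Big)^{1/2}.$$
Both $f$ and $1/f$ are continuous on the compact space $\T$, so weak convergence sends the right-hand side to $\big(\int f\,dP\big)^{1/2}\big(\int f^{-1}dQ\big)^{1/2}$. Hence $\limsup_n G(P_n,Q_n)\le G(P,Q)+\delta$, and letting $\delta\to0$ gives \eqref{limsup}.

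In short, $G$ is an infimum of weakly continuous functionals of the pair of measures, so it is weakly upper semicontinuous. All the real content lies in the density argument of Step 1.
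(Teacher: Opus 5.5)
The paper does not prove this statement. It is quoted from Coquet--Kamae--Mend\`es France \cite{CMFK} and used without argument, so there is no internal proof to compare with. Your proof is correct and complete. The Cauchy--Schwarz bound $G(\eta,\nu)\le(\int f\,d\eta)^{1/2}(\int f^{-1}d\nu)^{1/2}$ holds for every positive $f$. When $f,1/f\in C(\T)$, the right-hand side is weakly continuous in $(\eta,\nu)$. So once $G$ is identified as the infimum of these functionals, upper semicontinuity follows at once.

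The step you flag as the main obstacle is routine. Put $\lambda=\eta+\nu$, $a=d\eta/d\lambda$, $b=d\nu/d\lambda$ and take $0<\varepsilon<1$. On $\{a>0,b>0\}$ you have $f_\varepsilon a\le\sqrt{ab}+a$ and $b/f_\varepsilon\le\sqrt{ab}+b$. Dominated convergence then gives $\int f_\varepsilon\,d\eta\to G$ and $\int f_\varepsilon^{-1}\,d\nu\to G$. The singular parts only add $\varepsilon\,\eta(\{b=0\})$ and $\varepsilon\,\nu(\{a=0\})$, which also covers the case $G=0$.

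For fixed $\varepsilon$, approximate $f_\varepsilon$ in $L^1(\lambda)$ by some $g\in C(\T)$ and clip $g$ to $[\varepsilon,1/\varepsilon]$. Clipping keeps $g$ continuous and does not increase $|g-f_\varepsilon|$. Then $|g^{-1}-f_\varepsilon^{-1}|\le\varepsilon^{-2}|g-f_\varepsilon|$ controls the second integral.

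The other natural route writes $G(P,Q)$ as an infimum of $\sum_i\sqrt{P(E_i)Q(E_i)}$ over finite partitions into arcs whose endpoints are not atoms of $P$ or $Q$, and then applies the portmanteau theorem. Your test-function version avoids boundary issues entirely. It also gives the extension in the remark after the theorem for free. You use only compactness (so a positive continuous $f$ is bounded below) and density of continuous functions in $L^1$ of a finite Borel measure on a compact metric space. For non-normalized measures, total masses converge under weak convergence, so the normalization passes to the limit when the limits are nontrivial.
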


\begin{rem}
	As in the case of the geometric mean, this result can be generalized
	to any sequence of positive non-trivial finite measures $P_{n}$, $Q_{n}$ on a compact space converging weakly to two positive non trivial finite measures $P$ and $Q$.
\end{rem}

\noindent We aim to use the notion of affinity, together with Theorem~\ref{coquet-france}, 
to estimate the orthogonality properties of pairs of sequences for which the correlations exists (called also Wiener sequences).
To this end, we replace the sequence of Fourier coefficients 
\[
\frac{1}{n}\sum_{j=0}^{n-1} g_{j+k}\,\overline{g_{j}}
\]
by a sequence of finite positive measures on the unit circle. 

\noindent For any $g \in \mathcal{W}$, we define the sequence of measures
\begin{equation}\label{rhodef}
	d\sigma_{g,n}(x) = \rho_{g,n}(x)\,\frac{dx}{2\pi}, 
	\qquad \text{where} \quad
	\rho_{g,n}(x) = \left|\frac{1}{\sqrt{n}}\sum_{j=0}^{n-1} g_j e^{ijx}\right|^2.
\end{equation}

With this definition $\sigma_{g,n}$ defines a finite positive measure on the
circle. Moreover  we have the relation
\[
\frac{1}{n}\sum_{j=0}^{n-1} g_{j+k}\overline{g_{j}}=
\int_{0}^{2\pi } e^{-ikx} d\sigma_{g,n} (x) \ + \ \Delta_{n,k}  =
\widehat{\sigma }_{g,n} (k)+ \Delta_{n,k},
\]
where
\[
\left| \Delta_{n,k}  \right|=
\left| \frac{1}{n}\sum_{j=n-k}^{n-1} g_{j+k}\overline{g_{j}}  \right| \leq \
\frac{k}{n} \sup_j |g_{j}|^{2} \tend{n}{+\infty}{} 0.
\]
Taking the limit we have
\[
\widehat{\sigma }_{g} (k) = \lim_{n\to\infty}
\frac{1}{n}\sum_{j=0}^{n-1} g_{j+k}\overline{g_{j}}= \lim_{n\to\infty}
\widehat{\sigma }_{g,n} (k),
\]
so the sequence of measures $(\sigma_{g,n})_{n \in \N}$ converges weakly to $\sigma_{g}$.\\

\noindent As we mentioned before, it may be possible that the bounded sequence $(g_{n})_{n \in \N}$ does not belong to the
space Wiener sequences $\mathcal{W}$. But
we can always extract a subsequence $(n_r)$ such that
\[
\lim_{r \rightarrow \infty}
\frac{1}{n_r}\sum_{j=0}^{n_r-1}g_{j+k} \overline{g_{j}}
\]
exists for each $k\in \N$.
In fact, consider the sequence of finite positive measures $(\sigma_{g,n})_{n \in \N}$ on the circle defined in \eqref{rhodef}. These measures are all finite and $\sigma_{g,n} (\T)\leq  \|g\|_\infty^{2}
= \sup_j |g_{j}|^{2}$, for all $n$. Therefore they all belong to the ball $B(0,\|g\|_\infty^{2})$
centered at $0$ with radius $\|g\|_\infty^{2}$ in the  set of measures on the circle. This subset
is compact so there exists a subsequence $(n_r)$ such that the sequence of probability measures
$(\sigma_{g ,n_r})_{r \in \N}$ converges weakly to some probability measure $\sigma_{g,(n_r)}$.
The measure $\sigma_{g ,(n_r)}$ is called the spectral measure of the sequence $g$ along the
subsequence $(n_r)$.\\

\noindent We will also need the following result whose proof follows from Theorem \ref{coquet-france}.
\begin{cor}\cite{BL}\label{BeL} Let $g = (g_{n})_{n\in \N},
	h= (h_{n})_{n\in \N} \in \mathcal{W}$ be two non trivial sequences i.e.
	$\widehat{\sigma}_{g} (0)>0$ and $\widehat{\sigma}_{h} (0)>0$. Then
	\begin{equation}\label{BL-log}
		\limsup_{n\to \infty} \left | \frac{1}{n}  \sum_{j=1}^{n}g_{j}\overline{h}_{j}\right|
		\leq \sup \big\{G(\sigma_{g,(n_r)} ,\sigma_{h,(m_r)})\big\}.
	\end{equation}
	where the supremum on the right-hand side is taken over all subsequence $(n_r)$, $(m_r)$ for
	which the spectral measures exists.	
\end{cor}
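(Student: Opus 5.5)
The plan is to reduce the inequality to the Coquet--Kamae--Mendès France semicontinuity of affinity (Theorem \ref{coquet-france}), applied to the finite-$n$ measures $\sigma_{g,n},\sigma_{h,n}$ from \eqref{rhodef}. The first step is to express the correlation through these measures. By Parseval on the circle,
\[
\frac{1}{n}\sum_{j=0}^{n-1} g_j\overline{h_j}=\int_{\T}\Big(\frac{1}{\sqrt n}\sum_j g_je^{ijx}\Big)\overline{\Big(\frac{1}{\sqrt n}\sum_j h_je^{ijx}\Big)}\,\frac{dx}{2\pi}.
\]
The index shift from $\sum_{j=1}^n$ to $\sum_{j=0}^{n-1}$ costs only $O(1/n)$, since the sequences are bounded. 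Applying Cauchy--Schwarz pointwise under the integral then gives
\[
\Big|\frac1n\sum_{j}g_j\overline{h_j}\Big|\le\int_{\T}\sqrt{\rho_{g,n}\rho_{h,n}}\,\frac{dx}{2\pi}=\sqrt{\sigma_{g,n}(\T)\,\sigma_{h,n}(\T)}\;G(\sigma_{g,n},\sigma_{h,n}),
\]
where $G$ is the affinity extended to positive finite measures by normalization, as in the Remark. Here $\lambda=dx/2\pi$ serves as the common dominating measure.

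Next I take a subsequence $(n_r)$ realizing the $\limsup$ of the left-hand side. By weak-$*$ compactness of the ball $B(0,\|g\|_\infty^2)$, I pass to a further subsequence along which $\sigma_{g,n_r}\to\sigma_{g,(n_r)}$ and $\sigma_{h,n_r}\to\sigma_{h,(n_r)}$ weakly. Since $g,h\in\mathcal W$, the whole sequences already converge, so these limits are $\sigma_g$ and $\sigma_h$; I keep the subsequence notation so that the argument also covers the general bounded case. The total masses converge to $\widehat{\sigma_g}(0)$ and $\widehat{\sigma_h}(0)$, which are positive by the non-triviality hypothesis. Therefore the normalized measures converge weakly to probability measures.

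Theorem \ref{coquet-france}, in the form of the Remark for positive finite measures, then yields $\limsup_r G(\sigma_{g,n_r},\sigma_{h,n_r})\le G(\sigma_{g,(n_r)},\sigma_{h,(n_r)})$. This is bounded by the supremum over all subsequences for which the spectral measures exist.

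The main obstacle is the mass factor $\sqrt{\sigma_g(\T)\sigma_h(\T)}$, which does not appear in \eqref{BL-log} as stated. The inequality holds literally once $g$ and $h$ are normalized so that $\widehat{\sigma_g}(0)=\widehat{\sigma_h}(0)=1$, for example unimodular sequences, or after dividing by $\|g\|\,\|h\|$ in the mean-square sense. So I would state the bound with this factor and note that it equals $1$ under the normalization implicitly used in \cite{BL}. One technical point must also be checked: the normalization must not break weak convergence. This is exactly where $\widehat{\sigma}(0)>0$ is used, since it keeps the denominators bounded away from $0$.
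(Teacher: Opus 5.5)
Your argument is correct and follows the route the paper intends; the paper itself only asserts that the corollary follows from Theorem~\ref{coquet-france}. Parseval and the pointwise bound by $\sqrt{\rho_{g,n}\rho_{h,n}}$ reduce the correlation to the masses times $G(\sigma_{g,n},\sigma_{h,n})$, and the Coquet--Kamae--Mend\`es France semicontinuity then passes this to the limit. You are also right that the printed inequality needs the factor $\sqrt{\widehat{\sigma}_g(0)\,\widehat{\sigma}_h(0)}$: for $g=h\equiv 2$ the left side is $4$ while $G\le 1$. It does hold as stated whenever $|g_j|,|h_j|\le 1$, since then both masses are at most $1$, and this covers the M\"obius applications.
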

We are now able to prove Corollary \ref{AML26}.
\vskip 0.2cm

\begin{proof}[{\bf{of Corollary \ref{AML26}}}]
	It is suffice to apply Corollary \ref{BeL}.
\end{proof}

\section{Topological entropy of Möbius function}\label{TopoM}
%In his seminal paper, P. Sarnack stated that the topological entropy of the Möbius flow satisfy 
%$$\frac{6}{\pi^2} \leq h_{\top}(\bmu) \leq \frac{6}{\pi^2}\log 3.$$
%He also pointed out that under Chowla conjecture, the entropy of $\bmu$ is $\frac{6}{\pi^2}\log 3.$ Later, Veech in his private Lectures \cite{V-n}\footnote{see for instance the letter of W. Veech sent to the author in \cite{AV}, in which he declare that there is only 4 people in the world how has a copy of this notes including the author. } stated without proof that the entropy of $\bmu$ is $\frac{6}{\pi^2}\log 3.$\\

%Here, using the Veech representation of the Möbius flow as skew product \cite{AM} combined with Newton's theorem \cite{N}, we unconditionally establish the following
%\begin{Theorem}\label{main2}
%\end{Theorem}
\noindent{}This section is devoted to the proof of our third and fourth results.

\noindent{}Recall that in his seminal work~\cite{Sarnak}, Sarnak showed that 
the topological entropy of the Möbius flow satisfies
%This section is devoted to the proof of our third and four results. 
%\noindent{}Let us start first by recalling that in his seminal paper, Sarnak stated that the topological entropy of the M\"{o}bius flow satisfies
\begin{equation}\label{eq:sarnak-bounds}
	\frac{6}{\pi^2} \leq h_{\mathrm{top}}(\bmu) \leq \frac{6}{\pi^2}\log 3.
\end{equation}
He further observed that, conditionally on the Chowla conjecture, the
entropy of $\bmu$ equals $\frac{6}{\pi^2}\log 3$.
Subsequently, Veech, in unpublished lecture notes~\cite{Vn},
asserted that the topological entropy of $\bmu$ is equal
to $\frac{6}{\pi^2}\log 3$.\\

%\noindent{}Here, based on the ideas of Veech 
%representation of the
%M\"{o}bius flow as a skew product~\cite{AM} with Abrambov-Rokhlin's

\noindent{} Here, by building on Veech's ideas, we establish, unconditionally, the following

\begin{thm}\label{main2} The topological entropy of the Möbius function (that is, the Möbius flow) is 
	$$h_{top}(\bmu)=\frac{6}{\pi^2}\log 3.$$
\end{thm}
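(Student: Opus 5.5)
The plan is to prove the two inequalities $h_{top}(\bmu)\le \frac{6}{\pi^2}\log 3$ and $h_{top}(\bmu)\ge \frac{6}{\pi^2}\log 3$ separately, and to obtain the lower one from the variational principle (\ref{Va1}).

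\emph{Upper bound.} I would argue by counting words, as in Sarnak's bound (\ref{eq:sarnak-bounds}). Every point of the Möbius flow $X_{\bmu}$ is admissible. By this I mean that its support misses at least one residue class modulo $p^2$ for every prime $p$. Hence, for a block of length $n$ occurring in $X_{\bmu}$, the set $\{j : x_j\neq 0\}$ is an admissible subset of $[0,n)$. By the Mirsky/Chinese-remainder estimate, such a set has cardinality at most $(\frac{6}{\pi^2}+o(1))n$. In addition, the number of admissible supports is $e^{o(n)}$; this follows from the fact that the Mirsky factor $(Y,S,m)$ has zero entropy (Theorem \ref{SV} and property \ref{Veech-S}). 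On each support there are at most $3^{|\text{support}|}$ choices of values in $\{-1,0,1\}$. Summing over supports gives at most $e^{o(n)}\,3^{(6/\pi^2+o(1))n}$ words of length $n$, which yields the upper bound.

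\emph{Lower bound.} Here I would follow Veech's skew-product description of the Möbius flow over the admissible base \cite{Vn},\cite{AM}. The base is the orbit closure of $\bmu^2$ with its invariant measures, and the fibre over a base point carries the values in $\{-1,0,1\}$ at the positions in its support. Writing $X_{\bmu}$ in this way, I would build an invariant measure $\nu$ as a relatively Bernoulli extension of the Mirsky measure $m$. Over each nonzero site of the base point, $\nu$ places an independent uniform choice in $\{-1,0,1\}$. To see this is allowed, note that the hereditary structure is admissible: zeroing out any set of entries of an admissible point leaves an admissible point. So $\nu$ lives on the full admissible $\{-1,0,1\}$-shift. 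The entropy of $\nu$ is then computed by the Abramov--Rokhlin formula in the non-invertible form of Bogenschütz--Crauel \cite{BHc}. This gives $h_\nu=h_m(S)+\int \log 3^{\,\1_{\{y_0=1\}}}\,dm=0+\frac{6}{\pi^2}\log 3$, since $m(y_0=1)=\frac{6}{\pi^2}$ by Mirsky's theorem.

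\emph{Main obstacle.} The hard step is to show that $\nu$ is supported on $X_{\bmu}$ itself, i.e.\ that every admissible $\{-1,0,1\}$-word (with $0$'s placed inside the square-free support) actually occurs in $\bmu$. Equivalently, the orbit closure of $\bmu$ must be hereditary and must realize all sign patterns. I would first try to obtain the sign patterns from Veech's construction combined with the Pinsker structure of Theorem \ref{SV}: Theorem \ref{SV} says the Sarnak--Veech measure $\eta_M$ has fibres of full sign entropy over $\Pi_{\eta_M}=s^{-1}(\B)$. This should show that $X_{\bmu}$ contains, over $m$-typical base points, all $\pm1$ fillings, giving entropy at least $\frac{6}{\pi^2}\log 2$. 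For the extra zeros I would try to use closure under limits: taking limits of shifts of $\bmu$ along sequences $n_k$ for which $n_k+j$ is divisible by $q_j^2$ at prescribed sites $j$ forces zeros at those sites while keeping the other sites square-free. This is done by the Chinese remainder theorem together with the positive density of square-free values in arithmetic progressions, and it would produce every $\{-1,0,1\}$ filling. The delicate point, which I expect to carry most of the work, is to control the signs simultaneously with this zero-insertion. I would attempt this by applying the sign-freedom argument on the refined arithmetic progressions rather than on the whole sequence.
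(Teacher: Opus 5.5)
\textbf{Verdict: the proposal has a genuine gap at the step you yourself flag, and neither your sketch nor the paper closes it.}

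\textbf{The lower bound.} Your measure $\nu$ is a valid, if heavier, substitute for the paper's injection $\varphi$, but it only proves $h_{top}(X_{\A_3})\ge\frac{6}{\pi^2}\log 3$.
\begin{itemize}
\item The support of $\nu$ is all of $X_{\A_3}$: every admissible $\{0,1\}$-block has positive $m$-measure, and every $\{-1,0,1\}$-filling of it has positive $\nu$-mass. The same holds for $\eta_M$. So ``$\nu$ lives on $X_{\bmu}$'' is equivalent to $X_{\bmu}=X_{\A_3}$, i.e.\ to every admissible $\{-1,0,1\}$-word occurring in $\bmu$. That is a Chowla-type sign-pattern statement, which Sarnak obtains only under Chowla's conjecture.
\item Your step (a) is circular. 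Theorem \ref{SV} describes $\eta_M$ as a measure on $X_{\A_3}$, not as a quasi-generic measure of $\bmu$; property \ref{M} only says that quasi-generic measures of $\bmu$ project to $m$. So the fibre structure of $\eta_M$ tells you nothing about which sign patterns occur in $\bmu$.
\item If (a) held over $m$-typical base points, it would already give all of $X_{\A_3}$, because such points contain every admissible $\{0,1\}$-block. The $\frac{6}{\pi^2}\log 2$ you expect from (a) is the measure entropy of $\eta_M$ (Theorem \ref{ARV}); the topological entropy of its support is $\frac{6}{\pi^2}\log 3$. So inserting zeros is not where the difficulty lies.
\item In (b), the Chinese-remainder limits control only $\bmu^2$. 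Prescribing the values $\bmu(n+j)\in\{\pm1\}$ at the remaining sites along a progression is the same problem again, and the sign-freedom argument you would rerun there is step (a).
\end{itemize}
What your ingredients give unconditionally is only $h_{top}(X_{\bmu})\ge h_{top}(X_{\A_2})=\frac{6}{\pi^2}\log 2$, via the factor map $s:X_{\bmu}\to X_{\bmu^2}=X_{\A_2}$. The paper does not supply the missing step either. Its lower bound through $\varphi:\{0,\pm1\}^{\mathrm{supp}(\bmu^2)}\to X_{\A_3}$ bounds $h_{top}(X_{\A_3})$, and it tacitly identifies the orbit closure of $\bmu$ with $X_{\A_3}$.

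\textbf{The upper bound.} Your conclusion is right but the argument is not.
\begin{itemize}
\item The number of admissible supports in $[0,n)$ is not $e^{o(n)}$. Admissibility is hereditary, and the square-free integers in $[1,n]$, shifted into $[0,n)$, form an admissible set of size $\sim\frac{6}{\pi^2}n$. Hence there are at least $2^{(6/\pi^2+o(1))n}$ admissible supports; this is just $h_{top}(X_{\A_2})=\frac{6}{\pi^2}\log 2>0$.
\item Zero measure-theoretic entropy of the Mirsky measure says nothing about word counts.
\item A fixed support $A$ carries $2^{|A|}$ words, not $3^{|A|}$.
\end{itemize}
The correct count is the paper's envelope argument. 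Fix the first $N$ primes. Every admissible subset of $[0,n)$ lies in one of the $q_N$ sets $E_r=\{j\in[0,n):\ j\not\equiv r_p \pmod{p^2}\ \text{for all}\ p\in\pr_N\}$. Each $E_r$ has $n\prod_{p\in\pr_N}(1-p^{-2})+O(q_N)$ elements and carries exactly $3^{|E_r|}$ words supported in it. Hence there are at most $q_N\,3^{\,n\prod_{p\in\pr_N}(1-p^{-2})+O(q_N)}$ words of length $n$. Letting $n\to\infty$ and then $N\to\infty$ gives $\frac{6}{\pi^2}\log 3$.
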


%Before prooceding to the proof, we need to recall the fondamental notion admissibility due to Mirsky \cite{Ms} (see also \cite{Sarnak}).

\noindent{}Before proceeding to the proof, we recall the fundamental notion of
admissibility due to Mirsky~\cite{Mir} (see also~\cite{Sarnak}).

\smallskip

\begin{defn} A subset $A \subset \N$ is \emph{admissible} if
	the cardinality $t(p,A)$ of residue classes modulo $p^2$ represented
	by $A$, defined by
	\[
	t(p,A) \coloneqq
	\left|\bigl\{z \in \Z/p^2\Z : \exists\, n \in A,\ n \equiv z
	\pmod{p^2}\bigr\}\right|,
	\]
	satisfies
	\begin{equation}
		\label{eq:def-admissible}
		\forall\, p \in \pr,\quad t(p,A) < p^2.
	\end{equation}
	In other words, for every prime $p$, the image of $A$ under reduction
	modulo $p^2$ is a proper subset of $\Z/p^2\Z$.
\end{defn}	
\smallskip

\begin{defn} An infinite sequence
	$x = (x_n)_{n \in \N} \in X_3\equidef\{\pm 1,0\}$ is said to be \emph{admissible} if
	its \emph{support} $\{n \in \N : x_n \neq 0\}$ is admissible.
	Likewise, a finite block $x_1 \cdots x_N \in \{0,\pm 1\}^N$ is
	\emph{admissible} if $\{n \in \{1,\ldots,N\} : x_n \neq 0\}$ is
	admissible. Admissible sets in $X_2\equidef\{1,0\}$ are defined in the similar manner.
\end{defn}

\smallskip

\noindent
For each $i = 2,3$, we denote by $X_{\A_i}$ the set of all admissible
sequences in $X_i$. Since a set is admissible if and only if each of
its finite subsets is admissible, and since any translate of an
admissible set is admissible, $X_{\A_i}$ is a closed shift-invariant
subset of $X_i$, that is, a subshift. Moreover, $\bmu^2$ is an
admissible sequence, and
\[
X_{\A_3} = s^{-1}(X_{\A_2}),
\]
where $s$ denotes the squaring map. We denote by $\B(\A_i)$ the Borel
$\sigma$-algebra generated by $\A_i$, for $i = 2,3$.

\smallskip

\noindent We also denote by  $X_{\A^{*}_i},$ $i=2,3$
the subset of sequence $x$ for which the support- $\text{supp}(x)$ is infinite and let
$\Omega_j = \{\pm 1\}^{\Z_j}$, with $\Z_0 = \N \subsetneq \N \cup \{0\}$ and $\Z_1 = \Z$.  \\

\noindent Finally, For any $N \in \N$, a subset $A \subset \Z/N\Z$ will be identified with a subset $\tilde{A}$ of $[1,N].$
%For the proof, we recall first the Veech 's representation theorem in the following

\noindent We are now able to proceed to the proof of Theorem \ref{main2}.

\begin{proof}[\textbf{of Theorem \ref{main2}}].We start by defining, for any \( n > 0 \),
	\begin{align}\label{eq:par}
	A(3,n) = \{ x|_{[1,n]} = (x_1,\dots,x_n) : x \in X_{\A_3} \}.
\end{align}
	Then the cardinality of \( A(3,n) \) satisfies
	\begin{align}\label{eq:top1}
		|A(3,n)|
		&= \left| \left\{ \operatorname{supp}(x|_{[1,n]}) : x \in X_{\A_3} \right\} \right| \\
		&= \left| \left\{ A \subset [1,n] : A \text{ is admissible} \right\} \right|.
	\end{align}
	Since admissibility is preserved under the shift, we obtain, for any \( m > 0 \),
	\begin{align}\label{eq:top2}
		|A(3,n)|
		= \left| \left\{ \operatorname{supp}(x|_{[m+1,m+n]}) : x \in X_{\A_3} \right\} \right|.
	\end{align}
	Hence,
	\begin{align}\label{eq:top3}
		|A(3,n+m)| \leq |A(3,n)| \, |A(3,m)|.
	\end{align}
	By virtue of Fekete’s lemma, it follows that
	\[
	h_{\mathrm{top}}(X_{A_3}, S)
	= \lim_{n \to \infty} \frac{\log |A(3,n)|}{n}.
	\]
	%Now since $\mu^2$ is admissible, we define the following injective map
	%\begin{align}
	%\varphi : \{0,\pm 1\}^{\mathrm{supp}(\bmu^2)} & \longrightarrow X_{\mathcal A_3}\\
	%&x \longmapsto \varphi(x),
	%\end{align}
	The admissibility of $\mu^2$ allows us to define the following injection:
	\[
	\begin{aligned}
		\varphi : \Big\{0,\pm 1\Big\}^{\operatorname{supp}(\mu^2)} &\longrightarrow X_{\mathcal A_3},\\
		x &\longmapsto \varphi(x).
	\end{aligned}
	\]
	with
	$$
	\mathrm{pr}_n(\varphi(x))=
	\begin{cases}
		0 & \text{if } n \notin \mathrm{supp}(\bmu^2),\\
		x(n) & \text{otherwise}.
	\end{cases}
	$$
	%From this, we see that 
	Consequently, we obtain
	$$ h_{\mathrm{top}}(X_{A_3}, S) \geq \frac{6}{\pi^2}.\log 3,$$
	since the density of the subset ${\mathrm{supp}(\bmu^2)}$ is 
	$$\frac{6}{\pi^2}=\lim_{N \to +\infty}\frac{1}{N}\sum_{n=1}^{N}\bmu^2(n).$$
	To establish the upper bound inequality, we denote by $\pr_{2,N}$, for any $N>0$, the subset of the square of the first $N$ primes, that is, 
	$\pr_{2,N}=\big\{p^2~:~ p \in \pr_N\big\},$ where $\pr_N$ is the the subset  of the first $N$ primes. Set $q_N=\prod_{q \in \pr_{2,N}}q$. The set of square-free natural numbers is denoted by $R$, and we define $R_N$ by
	$R_N=\Big\{n \in \N~:~q\not| n, q \in \pr_{2,N}\Big\}$. Notice that $\bmu^2=\1_R,$ and following Rauzy \cite[p.99]{R}\footnote{This decomposition allows G. Rauzy to prove that $\mu^2$ is Besicovitch.}, we have
	$$\mu^2=\1_{R_N}+\mu^2-\1_{R_N}.$$
	Moreover, 
	\begin{align}\label{eq:rau1}
		\frac{1}{q_N}\sum_{n=1}^{q_N}\1_{R_N}(n)=\prod_{p \in \Pi_N}(1-\frac{1}{p^2})\equidef \zeta_N(2),
	\end{align}
	since the set of points $x \in \prod_{p \in \pr_N}\Z/p^2\Z$ none of whose coordinates is zero has cardinality $\ds \prod_{p \in \Pi_N}(p^2-1)$.
\noindent{}Let $\overline{R_N} \subset \Z/q_N\Z \cong \ds \prod_{p \in \Pi_N}\Z/p^2\Z$. 
The last isomorphism is due to the Chinese Remainder Theorem. We identify $\overline{R_N}$ with the subset $\widetilde{R_N}$ of $[1,q_n]$, we also extend its characteristic function $\1_{\widetilde{R_N}}$ to have period $q_N$. Obviously, $\mathrm{supp}(\1_{R_N})$ is exactly the set of $n\in \N$ such that $n$ is nonzero mod $p^2$ for every $p\in \pr_N$, that is, 
$$\mathrm{supp}(\1_{\widetilde{R_N}})=R_N.$$	

\noindent{}Therefore, \eqref{eq:rau1} combined with the periodicity of $\1_{R_n}$ implies
	\begin{align}\label{eq:rau2}
		\lim_{x \to +\infty}\frac{1}{x}\sum_{n=1}^{x}\1_{R_N}(n)=\zeta_N(2)
	\end{align}
since, by  \eqref{eq:rau1}, we have
$$ \big|\widetilde{R_N}\big|=\frac{q_N}{\zeta_2(N)}.$$
\noindent{}Let $\B_{3,q_N}$ be the set of "$N$-admissible" points in $\big\{0,\pm 1\big\}^{q_n}$, and $W_N^{0}$ the subset of all words of length $\big|\widetilde{R_N}\big|$ with the alphabet $\{0,\pm 1\}$, that is, $W_N^{0}=\big\{0,\pm 1\big\}^{\widetilde{R_N}}$. We thus have
$$\big|W_N^{0}\big|=3^{\frac{q_N}{\zeta_2(N)}},$$
Furthermore, for any $y \in \Z/q_n\Z$, to the set $\widetilde{R_N}+y$, we associate $W_N^{y}$ the subset of all words of length $w$ in $\{0,\pm 1\}^{\widetilde{R_N}+y}$. Then 
$$\big|W_N^{y}\big|=3^{\frac{q_N}{\zeta_2(N)}},$$
Notice further that 
$$\widetilde{R_N}+y=\big\{x \in [1,q_n]~~:~~ x \neq y \mathrm{~mod~} p^2, p \in \pr_N\big\}.$$
By the definition of $\B_{3,N}$ it follows that $z \in \B_{3,N}$ if and only if that the support $\mathrm{supp}(z)$
, of $z$ lies in at least one of the sets $R_N+y$, that is, $t(p,(\mathrm{supp}(z))) < p^2 $ with $p \in \pr_N$. It follows, from \eqref{eq:top1}, that

\begin{align}
	\big|\A_{3,q_N}\big|\leq |B(3,N)|&=\big|\bigcup_{y \in \Z/q_n\Z}W_N^{y}\big|\\
	& \leq q_N 3^{\frac{q_N}{\zeta_2(N)}}.
\end{align}
Hence,
\begin{align}
	\lim_{N \to +\infty}\frac{\log\big(\big|\A_{3,q_N}\big|\big)}{q_N} &\leq \lim_{N \to +\infty}\frac{\log\big(q_N\big)+q_N \log(3)}{q_n}\\
	&=\frac{6}{\pi^2}\log(3).
\end{align}
To finish the proof, we fix an invariant measure $\eta$ on the M\"{o}bius flow and consider the finite partition $\Cc_n$ given by the cylinder sets determined by the first $n$ coordinates of elements of $\A(3,n)$, as defined in \eqref{eq:par}. %Therefore, the atoms in $\Cc_n$ are given by $C(x_1,\cdots,x_n)$ with $\widetilde{x}=(x_1,\cdots,x_n) \in \A(3,n).$  We thus have
%$$h_{\eta}(\Cc_N,\eta)=-\sum_{\widetilde{x}\in \A(3,n)}\eta(C(\widetilde{x})) \log(C(\widetilde{x})).$$ 
%Whence, since the map $t \mapsto -\log(t)$ is convex, we get
%$$h_{\eta}(\Cc_N,\eta) \leq \log(|A(3,n)|).$$
Therefore, the atoms of $\mathcal{C}_n$ are the cylinders sets $C(x_1,\ldots,x_n)$ with 
$\widetilde{x}=(x_1,\ldots,x_n) \in \mathcal{A}(3,n)$. We thus obtain
\[
h_{\eta}(\mathcal{C}_n)
= -\sum_{\widetilde{x}\in \mathcal{A}(3,n)}
\eta\bigl(C(\widetilde{x})\bigr)\log\eta\bigl(C(\widetilde{x})\bigr).
\]
Since the map $t \mapsto -\log t$ is convex, Jensen's inequality yields
\[
h_{\eta}(\mathcal{C}_n) \leq \log\bigl(|\mathcal{A}(3,n)|\bigr).
\]
Dividing by $n$ and letting $n \to +\infty$, we obtain

$$
h_{\eta}(S, X_{\A_3}) = \lim_{n \to +\infty} \frac{h_{\eta}(\Cc_n)}{n} \leq \lim_{n \to +\infty} \frac{\log\bigl(|\A(3,n)|\bigr)}{n} = h_{\mathrm{top}}(S, X_{\A_3}).
$$

\noindent{}We complete the proof by appealing to the variational principle \eqref{Va1}. %which states
%$$h_{top}(S,X_{\A_3})=\sup_{\mu: S^*\mu=\mu}h_{\mu}(S,X_{\A_3}).$$
%	$\Cc_n$ the  the finite algebra determined by the partition, $\Q_n$ , of $\{0,\pm1\}^\N$ into the $3^n$ cylinder sets determined by the the first $n$ coordinates
\end{proof}
\begin{rem} R. Peckner established that the square-free flow is intrinsically ergodic in the sense of Weiss \cite{W2}, that is, the supremum in the variational principle is attained by a
unique invariant probability $\eta$ that we call the Peckner measure \cite{P3}.
\end{rem}
\begin{quec}
 We ask here whether the Möbius flow is also intrinsically ergodic.
\end{quec}

\noindent{}We end this section by applying Abramov–Rokhlin to compute the measure-theoretical entropy of $(X_{\A_3}, \B(\A_3),\eta_M)$. Precisely, we are going to prove the following

\begin{thm}\label{ARV} The Kolomogrov-Sinai entropy of the Möbius flow equipped with Veech-Sarnak measure is given by
	$$h_{\eta_M}(S)=\frac{6}{\pi^2}\log 2,$$
\end{thm}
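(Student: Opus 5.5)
We compute the entropy through Veech's skew-product representation of $(X_{\A_3},\B(\A_3),S,\eta_M)$ over the Mirsky system, combined with the Abramov--Rokhlin formula in the non-invertible form of Bogensch\"{u}tz--Crauel \cite{BHc}. Recall how $\eta_M$ is built. Take the Mirsky measure $m$ on $X_{\A_2}$ and, for $m$-almost every $y$, the product $(\frac12\delta_{1}+\frac12\delta_{-1})^{\otimes \mathrm{supp}(y)}$ of fair coin measures on the support of $y$. Equivalently, $\eta_M$ is the image of $m\otimes \beta$, with $\beta=(\frac12,\frac12)^{\otimes\N}$ on $\Omega_0$, under the map $(y,\omega)\mapsto (y_n\,\omega_{N_y(n)})_n$, where $N_y(n)$ counts the points of $\mathrm{supp}(y)$ up to $n$. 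In skew-product form, the system is $m$-isomorphic to $T(y,\omega)=(Sy,\sigma^{y_1}\omega)$ on $X_{\A_2}\times\Omega_0$. Here the fibre map is the one-sided Bernoulli shift $\sigma$ when $y_1=1$ and the identity when $y_1=0$. The first step is to verify this isomorphism, using the fact that $s\colon X_{\A_3}\to X_{\A_2}$ is the factor map of Theorem~\ref{SV} and that $m$-a.e.\ $y$ has infinite support, i.e.\ $m(X_{\A^*_2})=1$.

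The second step applies the Abramov--Rokhlin formula:
\[
h_{\eta_M}(S)=h_m(S)+h_{m\otimes\beta}(T\,|\,X_{\A_2}).
\]
The base entropy vanishes. By Theorem~\ref{SV} the Pinsker algebra is $s^{-1}(\B(X_2))$, so the Mirsky factor has zero entropy; alternatively, $(X_{\A_2},S,m)$ has discrete spectrum, being a factor of an odometer-type rotation on $\prod_p\Z/p^2\Z$. It remains to compute the fibre entropy. For a random composition $\sigma^{k_n(y)}$ of Bernoulli shifts with $k_n(y)=\sum_{j<n}y_{j+1}$, taking the time-zero generating partition $\mathcal{Q}$ of $\Omega_0$ gives
\[
\frac1n H_{\beta}\Bigl(\bigvee_{j<n}(\sigma^{k_j(y)})^{-1}\mathcal{Q}\Bigr)=\frac{k_n(y)+1}{n}\log 2
\]
exactly. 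The reason is that the refined partition is the cylinder partition on the first $k_n(y)+1$ coordinates, so these are distinct coordinates. By the ergodic theorem for $m$ (or by Mirsky's theorem on the density of square-free integers), $k_n(y)/n\to \int y_1\,dm=\frac{6}{\pi^2}$ for $m$-a.e.\ $y$. Hence the fibre entropy equals $\frac{6}{\pi^2}\log 2$. Since $\mathcal{Q}$ is a one-sided generator for each fibre map with finite entropy, the relative Kolmogorov--Sinai generator theorem shows that $\mathcal{Q}$ attains the supremum in the definition of fibre entropy.

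The main obstacle will be making the skew-product identification and the use of \cite{BHc} rigorous. The fibre maps are non-invertible, and when $y_1=0$ the fibre map is the identity, so the cocycle is not a cocycle of automorphisms. I would address this by passing to the natural extension: replace $\Omega_0$ by $\Omega_1=\{\pm1\}^{\Z}$, use the two-sided Mirsky system, and note that entropy is unchanged under natural extensions. One must also check that Theorem~\ref{SV}'s $\eta_M$ coincides with the law just described. I would verify this on cylinders, using that $\eE(\mathrm{pr}_1\,|\,\Pi_{\eta_M})=0$ together with Sarnak's description of $\eta_M$ as the relatively independent coin-flip lift of $m$. As a consistency check, the result $\frac{6}{\pi^2}\log 2$ lies below the value $\frac{6}{\pi^2}\log 3$ from Theorem~\ref{main2}, as the variational principle requires.
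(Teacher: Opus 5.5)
Your proposal is correct and follows essentially the same route as the paper. Both use Veech's skew-product representation over the Mirsky system, the Abramov--Rokhlin formula in the Bogensch\"{u}tz--Crauel form, and the vanishing of the base entropy $h_m(S)=0$. Both then compute the fibre entropy by counting the support points $\phi_n(x)=\sum_{i\le n}\mathrm{pr}_i(x)$ seen by cylinder partitions, which gives $\log 2\int \mathrm{pr}_1\,dm=\frac{6}{\pi^2}\log 2$.

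You are in fact more careful than the paper on the generator step, the natural extension and the identification of $\eta_M$. Two small points. There is a harmless index slip: the join over $j<n$ is the cylinder partition on $k_{n-1}(y)+1$ coordinates, not $k_n(y)+1$. Also, the generator property you need is the random one along $m$-a.e.\ $y$, which holds because $k_j(y)\to\infty$; it is not a generator property of each individual fibre map, since the identity fibre has none.
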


%For that, we start by recalling the following Veech's representation theorem of the Möbius flow.
\noindent{}For the proof of Theorem \ref{ARV}, we need the following Veech's representation theorem.
\begin{lem}[~{\cite[Section 22, p.76]{Vn}, \cite[proof of Veech Theorem]{AM}}]\label{Vr}The Möbius flow $(X_{\A_3}),\eta_M,S)$ can be represented as the following skew product
	\begin{eqnarray}
		& \psi_0 : & X_{\A_2}\times \Omega_0\rightarrow X_{\A_2}\times \Omega_0\\
		\nonumber & & (x,\omega)\mapsto (Sx,S^{\textrm{pr}_1(x)}(\omega)).
	\end{eqnarray}
	with the natural extension of $\psi_0$ to $X_{\A_2}\times \Omega_0$ denoted by $\psi_1$. The almost everywhere isomorphism is given by the map
	$\Phi$ define `coordinate wisely'  by 
	\begin{eqnarray}
		& \Phi : & X_{\A_2}\times \Omega_0\rightarrow X_{\A_3}\\
		\nonumber & & (x,\omega)\mapsto \Phi(x,\omega) : \textrm{pr}_n(\Phi(x,\omega))=\begin{cases}
			0 &\textrm{if~~} n \not \in  \text{supp}(x)=\{n_1<n_2<\cdots<n_k<\cdots\}\\
			\textrm{pr}_k(\omega) & \textrm{if~~} n=n_k \in \text{supp}(x).
		\end{cases}
	\end{eqnarray}
	We thus have $\Phi(m \times \p)=\eta_M$, $\p$ is the Bernoulli measure $\otimes_{n \in \Z} \Big(\frac{1}{2}\delta_{-1}+\frac{1}{2}\delta_{1}\Big),$  $m$ is the Mirsky measure, and $\Phi \circ \psi_0 =S\circ \Phi$. Notice further that as a consequence of the definition of $\Phi$, we have
	\begin{eqnarray}\label{VSMA}
\Phi(X_{\A_2} \times \Omega_0)=X_{\A_3}.
	\end{eqnarray}
\end{lem}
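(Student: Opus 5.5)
We plan to verify the three claims of Lemma \ref{Vr} directly from the definitions: (a) $\Phi$ intertwines $\psi_0$ and $S$; (b) $\Phi$ is surjective onto $X_{\A_3}$, i.e. \eqref{VSMA}; and (c) $\Phi_*(m\times\p)=\eta_M$, with $\Phi$ injective on a set of full $m\times\p$ measure, so that it is an almost-everywhere isomorphism. The natural extension $\psi_1$ is then handled through the standard correspondence between the two-sided shift on $X_{\A_2}\times\Omega_1$ and the one-sided map $\psi_0$.

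\textbf{Step 1: equivariance.} Fix $(x,\omega)$ and write $\mathrm{supp}(x)=\{n_1<n_2<\cdots\}$. Then $\mathrm{supp}(Sx)=\{n_k-1\}$ after removing $n_1-1$ in the case $n_1=1$, that is, when $\mathrm{pr}_1(x)=1$. In that case the $k$-th element of $\mathrm{supp}(Sx)$ is $n_{k+1}-1$. The coordinate carried there must be $\omega_{k+1}$, and this is exactly $\mathrm{pr}_k(S\omega)$. If instead $\mathrm{pr}_1(x)=0$, the indexing is unchanged and $\omega$ is not shifted. This gives $\Phi\circ\psi_0=S\circ\Phi$ coordinatewise.

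\textbf{Step 2: surjectivity and injectivity.} Given $z\in X_{\A_3}$, we take $x=z^2$, which is admissible since $X_{\A_3}=s^{-1}(X_{\A_2})$. We then let $\omega$ record the successive nonzero values of $z$. If the support of $z$ is finite, we complete $\omega$ arbitrarily. Hence $\Phi(x,\omega)=z$, which proves \eqref{VSMA}. Injectivity holds on $X_{\A^*_2}\times\Omega_0$. The Mirsky measure gives full mass to sequences with infinite support, because $m$ is generic for $\bmu^2$, which has density $6/\pi^2>0$, and $m$-a.e. point is generic by ergodicity. So $\Phi$ is injective on a conull set.

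\textbf{Step 3: identification of the measure (main obstacle).} The push-forward $\nu=\Phi_*(m\times\p)$ is $S$-invariant by Step 1, and $s_*\nu=m$. We would identify $\nu$ with $\eta_M$ through the uniqueness in the definition of $\eta_M$ (Theorem \ref{SV}). In Sarnak's construction, $\eta_M$ is the relatively independent lift of $m$ in which the signs on the support are i.i.d.\ symmetric. Concretely, we compute cylinder masses. For a block $z_1\cdots z_n$ with support $F$, we have
\[
\nu\bigl(C(z_1\cdots z_n)\bigr)=2^{-|F|}\,m\bigl(C(z_1^2\cdots z_n^2)\bigr),
\]
because, conditionally on $x$, the signs placed on $F$ are distinct coordinates of $\omega$ and are therefore independent fair coins. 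This matches the defining cylinder formula for $\eta_M$. The delicate point is to make sure the characterization of $\eta_M$ used is exactly this formula, rather than an abstract uniqueness statement; if only uniqueness is available, we would instead check that $\nu$ satisfies the property singled out there, namely that $\mathbb{E}(\mathrm{pr}_1\mid s^{-1}\B)=0$ with the signs conditionally Bernoulli.
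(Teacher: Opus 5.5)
Your argument is correct. The paper gives no proof of this lemma and only refers to Veech's notes and to el Abdalaoui--Nerurkar. Your direct verification (equivariance, surjectivity, injectivity on $X_{\A^*_2}\times\Omega_0$, and the cylinder computation) is therefore a self-contained substitute for that citation, and it is the standard argument.

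What your write-up adds is that it makes explicit what the identity $\Phi(m\times\p)=\eta_M$ rests on. It is a computation, not a consequence of the abstract uniqueness in Theorem~\ref{SV}. It requires $\eta_M$ to be Sarnak's lift given by $\eta_M(C(z_1\cdots z_n))=2^{-|F|}\,m(C(z_1^2\cdots z_n^2))$. Your fallback works only because of the clause ``signs conditionally Bernoulli'', which is this formula again. The condition $\eE(\mathrm{pr}_1\mid s^{-1}\B)=0$ alone controls only the conditional mean of a single sign. Any lift $\Phi_*(m\times Q)$ with $Q$ a shift-invariant measure on $\Omega_0$ satisfying $Q(\omega_1=1)=\frac12$ satisfies it, even when the signs along the support are not i.i.d.

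A few points you use implicitly should be stated:
\begin{enumerate}
\item $S$-invariance of $\nu=\Phi_*(m\times\p)$ requires $\psi_0$-invariance of $m\times\p$. This holds because each fibre map $S^{\mathrm{pr}_1(x)}\in\{\mathrm{Id},S\}$ preserves $\p$ and $m$ is $S$-invariant.
\item The inclusion $\Phi(X_{\A_2}\times\Omega_0)\subseteq X_{\A_3}$, which you need besides surjectivity, follows from $s(\Phi(x,\omega))=x$.
\item On the Borel conull set $X_{\A^*_2}\times\Omega_0$, where $\Phi$ is injective, the inverse is Borel by the Lusin--Souslin theorem. So $\Phi$ is an isomorphism mod $0$, not merely a factor map.
\item In the one-sided setting, the cylinders $C(z_1\cdots z_n)$ starting at coordinate $1$ form a generating $\pi$-system. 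Your cylinder identity therefore does give $\nu=\eta_M$.
\end{enumerate}
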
 
%W. Veech pointed out that this construction can be carried out to the setting of $\B$-free dynamical systems as it is established by K-Lema-Weiss \cite{KLW}. In fact, therein, In section 2,  the authors proved that the $\B$-free dynamical system $(X_{\B},S)$ 
%where $\B=\{b_1,b_2,\cdots\} \subset \N_{>1}=\{2,3,\cdots\},$ with $\gcd(b_i,b_j)=1$, for $i \neq j,$, $\sum_{b \in \B}\frac{1}{b}<+\infty$,  
%$S$ is the shift map and $X_{\B}={y  {0, 1}Z : y \mathrm{~is~}  \B-admissible}$ (see \cite{ALR} for the definition) is isomorph to the skew product $\widetilde{S}$ define by
%\begin{eqnarray}
%	& \widetilde{S} : & \prod_{b \in \B} \Z/b\Z \times \{0,1\}^{\Z}\rightarrow \prod_{b \in \B} \Z/b\Z \times \{0,1\}^{\Z}\\
%	\nonumber & & (\omega,x)\mapsto \big(S\omega,S^{\textrm{pr}_0(\phi(\omega))}(x)\big),
%\end{eqnarray}
%where as before $\phi$ is $\Phi$ define `coordinate wisely'  by
% \begin{eqnarray}
 %	& \phi & \prod_{b \in \B} \Z/b\Z \rightarrow \big\{0,1\big\}^\Z
 %	    \nonumber & & \omega \mapsto \phi(\omega) : \textrm{pr}_n(\phi(\omega))=\begin{cases}
 %	    	1 &\textrm{if~~} \forall i >1, \omega(i)+n \neq 0 \mathrm{mod} b_i\\
 %	    	0 & \textrm{if~~not} .
 %	    \end{cases}
%\end{eqnarray}	  
       
%We recall that the $\B$-numbers was introduced by P. Erd\H{o} and the $\B$-free dynamical systems  by el Abdalaoui-Lema\'{n}czyk and de la Rue here \cite{ALR}.

\noindent{}We shall also make use of the following formula for the entropy of a class of skew product transformations, due to Abramov–Rokhlin \cite{AR}, \cite{BHc}.
\begin{lem}[Abramov-Rokhlin~{\cite[Theorem (Abramov-Rokhlin's formula)]{BHc}}]
	\label{thm:ar} Let X and Y be
	Lebesgue spaces with measures $\mu$ and $\nu$, and $U$ on $X \times Y$ with measure $\mu \times \nu$ be defined by
	\[
	U(x, y) = \bigl(Tx,\, \mathbf{S}_x (y)\bigr),
	\]
	where $\mathbf{S}_{\bullet}=\{\mathbf{S}_x\}$ is a family of measure preserving-transformation on the probability space $(Y,\B,\nu)$. Then
	Then
	\begin{equation}
		\label{eq:ar}
		h(U) = h(T) + h(\mathbf{S}_{\bullet}|T).
	\end{equation}	
\end{lem}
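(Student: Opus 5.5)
We would prove the formula by passing through the conditional (relative) entropy of $U$ with respect to the base factor. Let $\pi : X\times Y\to X$ be the projection and $\mathcal{X}=\pi^{-1}(\mathcal{B}_X)$, which is a $U$-invariant sub-$\sigma$-algebra because $\pi\circ U=T\circ\pi$. Throughout we use $\mu\times\nu$ as the $U$-invariant measure; its invariance is immediate from Fubini and the fact that each $\mathbf{S}_x$ preserves $\nu$.

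We interpret the fibre entropy through the cocycle
\[
\mathbf{S}^{(n)}_x=\mathbf{S}_{T^{n-1}x}\circ\cdots\circ\mathbf{S}_x ,
\]
and set
\[
h(\mathbf{S}_\bullet|T)=\sup_{\mathcal{Q}}\lim_{n\to\infty}\frac1n\int_X H_\nu\Bigl(\bigvee_{i=0}^{n-1}(\mathbf{S}^{(i)}_x)^{-1}\mathcal{Q}\Bigr)\,d\mu(x),
\]
the supremum being over finite partitions $\mathcal{Q}$ of $Y$.

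\textbf{Step 1 (factor splitting).} For the factor $\mathcal{X}$ we invoke the classical Abramov--Rokhlin/Pinsker identity
\[
h(U)=h(T)+h(U\mid\mathcal{X}).
\]
Its proof runs as follows. Take finite partitions $\mathcal{P}$ of $X$ and $\mathcal{R}$ of $X\times Y$, and expand $H(\bigvee_{i<n}U^{-i}(\pi^{-1}\mathcal{P}\vee\mathcal{R}))$ with the chain rule. Divide by $n$. Then let $\mathcal{P}$ increase to a generating sequence for $T$; the martingale convergence theorem for conditional entropy lets us replace conditioning on $\bigvee_i T^{-i}\mathcal{P}$ by conditioning on $\mathcal{X}$. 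Finally take suprema in $\mathcal{P}$ and $\mathcal{R}$.

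\textbf{Step 2 (identifying the relative term).} We show $h(U\mid\mathcal{X})=h(\mathbf{S}_\bullet|T)$.
\begin{itemize}
\item First we reduce to product partitions. Every finite partition of $X\times Y$ can be approximated in the Rokhlin metric by partitions generated by rectangles $A\times B$. Modulo $\mathcal{X}$ such partitions contribute only through their $Y$-parts, so it suffices to consider $\mathcal{R}=X\times\mathcal{Q}$.
\item For these we compute directly. Since $U^{-i}(X\times Q)=\{(x,y): \mathbf{S}^{(i)}_x y\in Q\}$, conditioning on $\mathcal{X}$ amounts to disintegrating along fibres, and the fibre measures of $\mu\times\nu$ are all equal to $\nu$. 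This gives
\[
H\Bigl(\bigvee_{i<n}U^{-i}(X\times\mathcal{Q})\,\Big|\,\mathcal{X}\Bigr)=\int_X H_\nu\Bigl(\bigvee_{i<n}(\mathbf{S}^{(i)}_x)^{-1}\mathcal{Q}\Bigr)\,d\mu(x).
\]
\item Subadditivity in $n$ of the integrand, which follows from the cocycle identity together with $T$-invariance of $\mu$, makes the limit exist (Fekete / Kingman). Taking the supremum over $\mathcal{Q}$ yields the claim.
\end{itemize}

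\textbf{Step 3 (non-invertible case).} Since the paper needs the version of Bogensch\"utz--Crauel, we pass to the natural extensions of $T$ and of $U$. The natural extension of $U$ is again a skew product over the natural extension of $T$, with the same fibre maps composed with the projection. Entropy and fibre entropy are unchanged under natural extension, so Steps 1--2 transfer.

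The main obstacle is Step 2's reduction to product partitions, combined with the replacement in Step 1 of finite base partitions by the full non-atomic factor $\mathcal{X}$. This requires upper semicontinuity and continuity of conditional entropy along increasing $\sigma$-algebras, which is where the Lebesgue-space hypothesis is used. Our first attempt would be to fix a refining sequence of finite partitions $\mathcal{P}_k\nearrow\mathcal{B}_X$ and pass $k\to\infty$ under the integral by monotone convergence.
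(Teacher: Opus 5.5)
The paper gives no argument for this lemma; it only cites Bogensch\"utz--Crauel. It does so because in Theorem~\ref{ARV} the fibre maps $\mathbf{S}_x=S^{\mathrm{pr}_1(x)}$ act on the one-sided space $\Omega_0=\{\pm 1\}^{\N}$ and are not invertible. Your Steps 1--2 are the classical Abramov--Rokhlin route: a relative entropy identity, then disintegration along fibres, where all conditional measures of $\mu\times\nu$ equal $\nu$. Step 2 is correct as it stands.

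The step that fails is Step 3. If the $\mathbf{S}_x$ are non-invertible, the map $(\tilde x,y)\mapsto(\tilde T\tilde x,\mathbf{S}_{x_0}y)$ on $\tilde X\times Y$ is still non-invertible, so it is not the natural extension of $U$. The actual natural extension of $U$ is a skew product over $\tilde T$ whose fibre over $\tilde x$ is the inverse-limit space $\{(y_0,y_{-1},\dots): \mathbf{S}_{x_{-k-1}}y_{-k-1}=y_{-k}\ \forall k\}$. That fibre carries a $\tilde x$-dependent projective-limit measure rather than $\nu$. Hence neither an invertible form of Step 1 nor the product computation of Step 2 can be ``transferred'' to it, and invariance of fibre entropy under this passage would be a separate nontrivial claim. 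As written, your proposal does not cover the non-invertible case, which is exactly the one the paper needs.

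The repair is to delete Step 3, since nothing in Steps 1--2 uses invertibility, provided the limit exchange in Step 1 is justified properly. Monotone or martingale convergence in $k$ works only for each fixed $n$. It does not by itself let you swap $n\to\infty$ and $k\to\infty$; subadditivity does.

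Put $a^{(k)}_n=H\bigl(\bigvee_{i<n}U^{-i}\mathcal{R}\bigm|\pi^{-1}\bigvee_{i<n}T^{-i}\mathcal{P}_k\bigr)$.
\begin{enumerate}
\item Using only invariance of $\mu\times\nu$, one gets $a^{(k)}_{n+m}\le a^{(k)}_n+a^{(k)}_m$.
\item $a^{(k)}_n$ is non-increasing in $k$ and tends to $H\bigl(\bigvee_{i<n}U^{-i}\mathcal{R}\bigm|\mathcal{X}\bigr)$. That sequence is itself subadditive in $n$, because $U^{-1}\mathcal{X}\subset\mathcal{X}$.
\item Therefore $\lim_k\lim_n a^{(k)}_n/n=\inf_k\inf_n a^{(k)}_n/n=\inf_n\inf_k a^{(k)}_n/n=h(U,\mathcal{R}\mid\mathcal{X})$.
\end{enumerate}
For the upper bound, use $h(U,\mathcal{R})\le h(U,\pi^{-1}\mathcal{P}_k\vee\mathcal{R})=h(T,\mathcal{P}_k)+\lim_n a^{(k)}_n/n$ and let $k\to\infty$. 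For the lower bound, use $a^{(k)}_n\ge H\bigl(\bigvee_{i<n}U^{-i}\mathcal{R}\bigm|\mathcal{X}\bigr)$. This gives $h(U)=h(T)+h(U\mid\mathcal{X})$ for arbitrary, possibly non-invertible, $T$ and $\mathbf{S}_x$. Together with your Step 2, it yields the lemma in the generality the paper uses.
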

Where $h(S_{\bullet}|T)$ is the fiber entropy given by 

\begin{align}\label{eq:ve1}
	h(\mathbf{S}_{\bullet}\mid T)
	&= \sup\bigl\{ h_{\nu}(\mathcal{P}, \mathbf{S}_{\bullet}\mid T)
	:\ \mathcal{P} \text{ is a finite measurable partition of } Y \bigr\}, \\
	\textrm{with~~}
	h_{\nu}(\mathcal{P}, \mathbf{S}_{\bullet}\mid T)
	&= \lim_{N \to \infty} \frac{1}{N}
	\int_{Y}
	H\!\left(
	\mathcal{P} \vee \bigvee_{j=0}^{N-1}
	\left( \mathbf{S}_{\bullet}(j,x,T)^{-1}\right)(\mathcal{P})
	\right)\, d\nu(x),
\end{align}
and 
\[
\mathbf{S}_{\bullet}(j,x,T)=
\begin{cases}
	\mathbf{S}_{T^{j-1}x}\circ \mathbf{S}_{T^{j-2}x}\circ \cdots \circ \mathbf{S}_x, & \text{if } j \geq 1,\\[4pt]
	\mathrm{Id}_Y, & \text{if } j=0.
\end{cases}
\]
\smallskip
\noindent{}We are now able to present the proof of Theorem \ref{main2}.\\

\begin{proof}[\textbf{of Theorem \ref{main2}}]. It suffice to apply Abramov-Rokhlin's formula \ref{thm:ar} with $X=X_{\A_2}, Y=\Omega_0$ equipped with $m$ and $\p$, $T=S$ and $\mathbf{S}_x=S^{\textrm{pr}_1(x)}$, where $S$ is the shift map. Therefore, by Cellarosi-Sinai-Sarnak's theorem \cite{CS}, \cite{Sarnak}, $h(S)=h_m(S)=0$ and 
	\begin{align}
		\mathbf{S}(j,x,T)
		&= \mathbf{S}_{T^{j-1}x}\circ \mathbf{S}_{T^{j-2}x}\circ \cdots \circ \mathbf{S}_x \\
		&= S^{\operatorname{pr}_j(x)} \circ S^{\operatorname{pr}_{j-1}(x)} \circ \cdots \circ S^{\operatorname{pr}_1(x)} \\
		&=\ds S^{\phi_i(x)}.
	\end{align}
	where $\ds \phi_j(x)=\sum_{i=1}^{j}\textrm{pr}_i(x)$. Now, for each $n > 0$, let $\B_n$ denote the finite algebra determined by the partition, $\pr_n$ , of $\{0,1\}^\N$ into the $2^n$ cylinder
	sets determined 
	by the the first $n$ coordinates. Let us further notice that for any $x \in X_{\A_2}$, the symbols at positions $n \notin \text{supp}(x)$ are fixed to $0$. The symbols at positions $n \in \text{supp}(x)$ are i.i.d. random variables with values $\{\pm 1\}$, each occurring with probability $1/2$. We further associate to $x$ l'atome $A_n(x) \in \pr_n(x)$. Therefore
	$$|\textrm{supp}(A_n(x))|=\phi_n(x).$$ Whence
	
	$$h_{\eta_M}(S)=h_{\eta_M}(S|X_{\A_2})=\lim \frac{1}{n}\int_{X_{\A_2}}\log(2^{\phi_n(x)})dm(x)=
	\log(2)\int_{X_{\A_2}}\textrm{pr}_1(x) m(x)=\frac{6}{\pi^2}h_{\p}(T),$$
	We finish the proof by noticing that $h_{\p}(T)=\log 2.$ \footnote{It is worth noting that Newton's formula \cite{New} remains valid, yet the hypothesis of \cite[Theorem~1]{New} is not fulfilled.}. This achieve the proof of the theorem.  
\end{proof}
\smallskip
\vskip 0.5cm
\section{Extension to the $\B$-free dynamical systems}
\noindent{}As observed by W. Veech, his representation of the square-free dynamical system as a skew product is a special case of that of the $\B$-free dynamical systems obtained by Kułaga-Przymus, Lemańczyk, and Weiss \cite{KLW}. Here, we will extend the Veech's construction for the Möbius flow to the case of $\B$-free dynamical system to obtain a skew product with a Bernoulli base in the space $\{0,-1,1\}^{\mathbb{Z}}$ endowed with the standard Bernoulli measure. %his turns out to be an essential point, since Veech's construction yields an extension of the Möbius flow to the setting of $\B$-free systems.. 
For that, by \cite[Section~2 ]{KLW}, the $\B$-free dynamical system $(X_{\mathcal{B}}, S)$, where
$$\B = \{b_1, b_2, \ldots\} \subset \N_{>1} = \{2, 3, \ldots\},$$
with $\gcd(b_i, b_j) = 1$ for $i \neq j$, and $\ds \sum_{b \in \mathcal{B}} \frac{1}{b} < +\infty$, $S$ denotes the shift map, and
$$X_{\B} = \big\{ y \in \{0,1\}^{\Z} : y \text{ is } \B\text{-admissible} \big\}$$
(see \cite{ALR} for the definition of $\B\text{-admissible}$), is isomorphic to the skew product transformation $\widetilde{S}$ defined by
\begin{eqnarray}
	\widetilde{S} : \prod_{b \in \B} \Z/b\Z \times \{0,1\}^{\Z} 
	&\longrightarrow& 
	\prod_{b \in \B} \Z/b\Z \times \{0,1\}^{\Z} \nonumber \\
	(\omega, x) &\longmapsto& \big(S\omega,\, S^{\mathrm{pr}_0(\phi(\omega))}(x)\big),
\end{eqnarray}
where $\phi$ is defined coordinate-wise by
\begin{eqnarray}
	\phi : \prod_{b \in \B} \Z/b\Z
	&\longrightarrow& 
	\{0,1\}^{\Z} \nonumber \\
	\omega &\longmapsto& \phi(\omega),
\end{eqnarray}
with
\begin{equation}
	\mathrm{pr}_n(\phi(\omega)) = 
	\begin{cases} 
		1 & \text{if } \forall\, i \geq 1, \quad \omega(i) + n \not\equiv 0 \pmod{b_i}, \\ 
		0 & \text{otherwise.}
	\end{cases}
\end{equation}
To define the Erdös-Möbius flow, we define coordinate-wise 
the map $\psi$  by
\begin{eqnarray}
	\tilde{\psi} : \prod_{b \in \B} \Z/b\Z
	&\longrightarrow& 
	\{0,-1,1\}^{\Z} \nonumber \\
	\omega &\longmapsto& \tilde{\psi}(\omega)
\end{eqnarray} 
with, 
\begin{equation}
	\mathrm{pr}_n(\psi(\omega)) = 
	\begin{cases}
		1 &\textrm{~if~} \forall\, i \geq 1, \quad \omega(i) + n \not\equiv 0 \pmod{b_i} \textrm{~~and~~} n \equiv 0 \pmod{2},\\
		-1  &\text{if } \forall\, i \geq 1, \quad \omega(i) + n \not\equiv 0 \pmod{b_i} \textrm{~~and~~} n \equiv 1\pmod{2},
		\\ 
		0 & \text{otherwise.}	
	\end{cases}	
\end{equation}
Let us put 
\begin{eqnarray}\label{Def-mu-B-f}
	\bmu_\B(n)=\psi(\bmu_{\B}^{(2)}), n \in \Z,
\end{eqnarray}
where $\bmu_\B^{(2)}$ is the characteristic function of $\B$-free numbers. Notice that 
$$s(\bmu_B)= \bmu_\B^{(2)}.$$ Denote by the closure of the orbit of $\bmu_B$ by $X_{\B,3}$. Then, in similar way as for the Möbius flow, it can be seen that  
$X_{\B,3} =\overline{\{S^n(\bmu_B), n\in \Z\}} \subset  \big\{\pm 1,0\big\}^\Z$.

\noindent{}We further have that the Möbius-$\B$-free skew product is given by 
\begin{eqnarray}
	& \psi_{\B} : & X_{\B}\times \Omega_0\rightarrow X_{\B}\times \Omega_0\\
	\nonumber & & (x,\omega)\mapsto (Sx,S^{\textrm{pr}_1(x)}(\omega)).
\end{eqnarray}
%where $\phi$ is defined coordinate-wise by\psi()
We recall that $\B$-free numbers were introduced by P.~Erd\H{o}s \footnote{
	P. Erd\H{o}s conjectured that for any $\varepsilon > 0$ there exists $N_{\mathcal{B},\varepsilon}$ such that for any $N \geq N_{\mathcal{B},\varepsilon}$ the interval $[N, N + N^\varepsilon]$ contains at least one $\mathcal{B}$-free number \cite{Er}. As far as the author knows, this conjecture is still open. In \cite{ALR}, the authors provide some evidence for this conjecture. We also refer to \cite{Za} for more details.
}, and 
$\B$-free dynamical systems  by \linebreak El~Abdalaoui, Lema\'{n}czyk, and de~la~Rue in \cite{ALR}.\\

\noindent{}The topological entropy of the $B$-free dynamical system is \cite{ALR} 
	$$h_{\mathcal{B}, \mathrm{top}}(S, X_{\mathcal{B}})=\prod_{b \in \mathcal{B}} \left(1-\frac{1}{b}\right) \log(2).$$
As for the Möbius flow, on can explore the  topological entropy of the Möbius-$\B$-free. Such topological entropy should be 
$$h_{\mathcal{B}, \mathrm{top}}(S, X_{\mathcal{B},3})=\prod_{b \in \mathcal{B}} \left(1-\frac{1}{b}\right) \log(3).$$
%We believe that the method developed here can also be used to tackle this conjecture; however, following Littlewood's practice, we prefer to state it as a question.
Although we expect this assertion to be valid in this generality, we find it more appropriate to present the problem as a question.
%\begin{que}Do we have that the topological entropy of the Möbius-$\B$-free is 
%	$$h_{\B, top}(S, X_{\B,3})=\frac{b \in \B}{1-\frac{1}{b}} \log(3).$$	
%\end{que}
\begin{que} We ask whether the topological entropy of the Möbius-$\mathcal{B}$-free system is 
	$$h_{\mathcal{B}, \mathrm{top}}(S, X_{\mathcal{B},3})=\prod_{b \in \mathcal{B}} \left(1-\frac{1}{b}\right) \log(3).$$
\end{que}
We further asked about the extension of know results about Möbius function to Möbius-$\B$-free function. In forthcoming paper, we will explore a part of this project. Notice that Sarnak's Möbius disjointness conjecture can be stated for $\bmu_{\B,3}$ as follows.
\begin{conj}For any topological dynamical system with zero topological entropy $(X,T)$, for any $f \in C(X)$, for any $x \in X$, we have
	$$\frac{1}{N}\sum_{n=1}^{N}\bmu_{\B,3}(n) f(T^nx) \tend{N}{+\infty}{}0.$$
\end{conj}

\begin{rem}Our definition of the Möbius-$\B$-free function is from dynamic. It turns out that, in Number Theory, the extension of the Möbius function was introduced 
	in the setting of $r$-free numbers, $r \geq 1$, by T. Apostol \cite[p.50-51]{Ap1},\cite{Ap2}. This extension is denoted by $\bmu_r$ and  define as follows:
	 \begin{align*}
	 	\bmu_r(1)&=1,\\
	 	\bmu_r(n)&=0 \textrm{~if~} p^{r+1}|n \textrm{~for~~some~prime~} p,\\
	 	\bmu_r(n)&=(-1)^k \textrm{~if~}  n=p^r_1\cdots p^r_k \prod_{i>k}p^{a_i}_i, 0  \leq a_i<r\\	 	
\bmu_r(n)&=1 \textrm{~~otherwise}	 
 	\end{align*}
\[
\mu_r(n)=
\begin{cases}
	1, & \text{if } n=1,\\[1ex]
	0, & \text{if } p^{r+1}\mid n \text{ for some prime } p,\\[1ex]
	(-1)^k, &
	\text{if }
	n=p_1^{r}\cdots p_k^{r}\displaystyle\prod_{i>k}p_i^{\alpha_i},
	\quad 0\le \alpha_i<r,\\[1ex]
	1, & \text{otherwise.}
\end{cases}
\]

Alternatively, 
\[
\mu_r(n)=
\begin{cases}
	0, & \text{if } p^{r+1}\mid n \text{ for some prime } p,\\[1ex]
	(-1)^{k}, & \text{otherwise},
\end{cases}
\]
where $r$ denotes the exact number of distinct prime divisors $p$ in the factorization of $n$ such that $p^r\mid n$. T. Apostol proved that $\bmu_r$ has properties similar to those $\bmu=\bmu_1$,and obviously 
$$s(\mu_r)=|\mu_r|=\1_{\{r-\textrm{free~numbers}\}}$$
For $\B$-free case, following Mirsky \cite{MC} and Carlitz \cite{Car}, one can define the Möbius $\B$-free function as follows
the solution of the following equation where 
$$1*u=\bmu_\B^{(2)},$$,
Where $*$ is the Dirichlet convolution, that is, 
\begin{eqnarray}\label{MCF}
\bmu_{B,MC}=\bmu*\bmu_\B^{(2)}.
\end{eqnarray}
We named this function Möbius-Carlitz-Mirsky function. In the case of Möbius $r$-free function of Apostol,  we get 
\[
(\mu*1_{r\text{-free}})(n)=
\begin{cases}
	\mu\!\left(\sqrt[r]{n}\right),& \text{if } n \text{ is a perfect } r\text{th power},\\[1ex]
	0,& \text{otherwise}.
\end{cases}
\]
where $\B=\{p^r , r \geq 2\}$. This formula is related to \cite[Lemma.5]{Ap2}.

We end this section by mentioning that we believe that the Erdös question of the moving interval $[N,N+N^\epsilon]$ can be talked by using our dynamical definition of the Möbius $B$-free function or the study of the dynamical property of that introduced in \eqref{MCF}. We further propose the following question inspired by a project of research proposed to me by G. Sanoli on the Sarnak's Möbius disjointness for the short interval \cite{Sano}.

\begin{que}Do we have for any dynamical system $(X,T)$ with zero topological entropy, for any continuous function $f \in C(X)$, for any $x \in X$
\[
\boxed{
	\sum_{N<n\le N+H}\mu_{\B,3}(n)f(T^n x_0)=o(H)
}
\]

as

\[
N\to\infty,\qquad H\to\infty,\qquad H=o(N),
\]
\end{que}
%Let us mention that in \cite{ALR}, an attempt was made to study Erd\H{o}s's question from a dynamical point of view. Unfortunately, now, we believe that  the definition of $\bmu_{B,MC}$ proposed therein does not appear to be well adapted to rd\H{o}s's problem.

Let us mention that in \cite{ALR}, an attempt was made to study Erd\H{o}s's question from a dynamical point of view. Unfortunately, we now believe that the definition of $\bmu_{B,MC}$ proposed therein is not well adapted to Erd\H{o}s's problem.​

%proposed therein is not well adapted to Erd\H{o}s's problem.
\end{rem}

\textbf{Acknowledgment.}
The present work was revisited during a pleasant visit to the American University in Cairo, and the first author wishes to express his sincere gratitude for the warm hospitality extended to him. He would like to thank Ahmad El-Guindy and Wafik Lotfallah for their kind invitation and for stimulating discussions on topics related to the subject of this paper. He would also like to thank Michael Lin for valuable discussions and suggestions on the subject, and Prof. R. Balasubramanian, who brought to his attention the paper by Mirsky and Carlitz.


\begin{thebibliography}{99}
	
	\bibitem{AV}
	e. H. el Abdalaoui, \emph{On Veech's proof of Sarnak's theorem on the M\"{o}bius flow}, {\it Preprint}, 2017, arXiv:1711.06326 [math.DS].
	
	\bibitem{AD}
	e. H. el Abdalaoui \& M. Disertori,  Spectral properties of the M\"{o}bius function and a random M\"{o}bius model. Stoch. Dyn. 16 (2016), no. 1, 1650005, 25 pp.
	
	\bibitem{AM}
	e. H. el Abdalaoui \& M.Nerurkar, Sarnak's M\"{o}bius disjointness for dynamical systems with singular spectrum and dissection of M\"{o}bius flow,
	{\it Preprint}, 2020,  	arXiv:2006.07646 [math.DS].
	
	\bibitem{ALR}
	e. H. el Abdalaoui, M. Lema\'{n}czyk, T. de la Rue, A dynamical point of view on the set of $\B$-free integers. Int. Math. Res. Not. IMRN 16 (2015), 7258–7286.
	
	\bibitem{ALV} 
	e. H. el Abdalaoui \& M. Lin, \emph{Operator ergodic theorems with M\"obius "weights"}, arXiv:2607.15960 [math.DS], 2024.
	
	\bibitem{AD}
	e. H. el Abdalaoui \& M. Disertori,  Spectral properties of the M\"{o}bius function and a random M\"{o}bius model. Stoch. Dyn. 16 (2016), no. 1, 1650005, 25 pp.
	
	\bibitem{AR}
	L. M. Abramov and V. A. Rokhlin, The entropy of a skew product of measure-
	preserving transformations, Amer. Math. Soc. Transl. Ser. 2,48 (1966), 255-265.
	
	\bibitem{Alex}
    Alexeyev, V. M. Existence of a bounded function of the maximal spectral type. Translated from the 1958 Russian original by A. Katok 
	Ergodic Theory Dynam. Systems 2 (1982), no. 3-4, 259-261 (1983).
	
	\bibitem{Ap1}
	T. M. Apostol, {\it Introduction to analytic number theory\/}, 
	Undergrad, Texts in Math., Springer-Verlag, New York-Heidelberg, 1976.
	
	\bibitem{Ap2}
	 T. M. Apostol, Möbius Functions of Order $k$, Pacific J. Math., 32 (1970), no. 1, 21-38.
	
	
	\bibitem{BL}
	A. Bellow and V. Losert, The weighted pointwise ergodic theorem and the individual ergodic theorem along subsequences, TAMS v. 288, no. 1 (1985), 307-345.
	
	\bibitem{BHc}
	T. Bogensch{\"u}tz and H. Crauel, Hans,The Abramov-Rokhlin formula,
	Ergodic theory and related topics III. Proceedings of the 3rd international conference, held in G\"ustrow, Germany, October 22-27, 1990,32-35,1992, Berlin etc.: Springer-Verlag.
	
	\bibitem{Car}
	L. Carlitz, A problem in additive arithmetic. II, Quarterly Journal of Mathematics (Oxford), vol. 3(1932), pp. 271-290.
	
	\bibitem{CMFK}
	J. Coquet, T. Kamae, M. Mend\`es-France, \emph{Sur la mesure spectrale de certaines suites arithm\'etiques},
	Bull. Soc. Math. France, 105 (1977), 369--384.
	
	%\bibitem{Ca}
	%JOÃO PEDRO CASIMIRO RIJO, METRIC ENTROPY AND TOPOLOGICAL ENTROPY:
	%THE VARIATIONAL PRINCIPLE,
	
	\bibitem{Bo1}
	Jean Bourgain, On the spectral type of Ornstein's class one transformations, Isr. J. Math., 84 (1993), no. 1-2, 53-63.
	
	\bibitem{CS}
	F. Cellarosi \& G. Ya. Sina\u{\i},  Ergodic properties of square-free numbers. J. Eur. Math. Soc. (JEMS) 15 (2013), no. 4, 1343-1374.
	
	\bibitem{CFS}
	I. P. Cornfeld; S. V. Fomin;Ya. G. Sina\u{\i},
	Ergodic theory. Translated from the Russian by A. B. Sosinski\u{i}
	Grundlehren Math. Wiss.,[Fundamental Principles of Mathematical Sciences] Springer-Verlag, New York, 1982. x+486 pp.
	
%	\bibitem{CMFK}
%	J. Coquet, T. Kamae, M. Mend\`es-France, \emph{Sur la mesure spectrale de certaines suites arithm\'etiques},
	%Bull. Soc. Math. France, 105 (1977), 369--384.
	
	\bibitem{Eli}
	PDTA Elliott, On the correlation of multiplicative and the sum of additive arithmetic functions, Memoirs Amer Math Soc 112 (1994), 1-88.
	
	\bibitem{Er}
	P. Erd\H{o}s, On the difference of consecutive terms of sequences defined by divisibility properties, Acta Arith. 12 (1966), 175–182.
		
	\bibitem{F4}
	D.H. Fremlin,  Measure Theory, Volume 4 (Torres Fremlin, 2001).
	
	\bibitem{G}
	E. Glasner, Ergodic theory via joinings. Mathematical Surveys and Monographs, 101. American Mathematical Society,Providence, RI, 2003.
	
	
	\bibitem{Goo}
	T.N.T. Goodman, Relating topological entropy and measure entropy,
	Bull. London Math. Soc. 3 (1971), 176-180.
	
	\bibitem{K}
	
	\bibitem{KLW}
	J. Ku\l{}aga-Przymus, M. Lema\'{n}czyk, and B. Weiss, On invariant measures for $\B$-free systems. Proc. Lond. Math. Soc. (3) 110(6) (2015), 1435–1474.
	
	\bibitem{MRT}
	K. Matom{\"a}ki, M. Radziwi{\l}{\l} and T. Tao, An averaged form of {C}howla's conjecture, Algebra Number Theory,
	Vol (9), 2015, 2167-2196
	
	\bibitem{Mir}
	L. Mirsky, Arithmetical pattern problems relating to divisibility by $r$-th powers, Proc. London Math. Soc. (2) 50, (1949). 497-508.
	
	\bibitem{MC}
	L. Mirsky, Summation formulae involving arithmetic functions. Duke Math. J.,6(2): 261-272 (June 1949). DOI: 10.1215/S0012-7094-49-01625-7
	
	\bibitem{M}
	M. Misiurewicz, A short proof of the variational principle for a $\Z_{+}^n$ action on a compact space, Ast\'erisque 40 (1976), 147–157.
	
	\bibitem{Nad}
	M. G. Nadkarni, Spectral Theory of Dynamical Systems , Birkh\"{a}user Advanced Texts : Basel Textbooks, Birkh\"{a}user Verlag, Basel, 1998.
	
	\bibitem{New}
	D. Newton, 
	On the entropy of certain classes of skew-product transformations,
	Proc. Am. Math. Soc. 21, 722-726 (1969). 
	
	\bibitem{P}
	W. Parry, \emph{Topics in ergodic theory,} Reprint of the 1981 original. Cambridge Tracts in Mathematics,
	75. Cambridge University Press, Cambridge, 2004.
	
	\bibitem{P3}
	R. Peckner, Uniqueness of the measure of maximal entropy
	for the squarefree flow,  Isr. J. Math. 210, 335–357 (2015). 
	
	\bibitem{R}
	G. Rauzy, \emph{Propri\'et\'es statistiques de suites arithm\'etiques}, PUF, 1976.
	
	\bibitem{Rh}
	V. A. Rokhlin, Lectures on the entropy theory of transformations
	with invariant measure. Usp. Mat. Nauk. 22 (1967), 3-56 (Russian)-Russian Math. Surveys 22 (1967),1-52.
	
	\bibitem{Ru}
	W. Rudin, Functional Analysis, 1991, McGraw-Hill, 2nd, +424 pp.

\bibitem{Sano}
G. Sanoli, \textit{private communication}, 2020. 
	
	\bibitem{Sarnak}
	P. Sarnak, M\"{o}bius randomness and dynamics. Not. S. Afr. Math. Soc. 43 (2012), no. 2, 89-97. 
	
	\bibitem{Vs}
	W. A. Veech. A criterion for a process to be prime. Monatsh. Math. 94(4) (1982), 335–341.
	
	\bibitem{Vn}
	W. Veech, M\"{o}bius dynamics, Lectures Notes, Spring Semester 2016, +164 pp, private.
	
	%\bibitem{Veech-preprint} W. Veech, A Conjecture Between the Chowla and
	%Sarnak Conjectures, preprint, May 6, 2016, private communication.
	
	\bibitem{W}
	P. Walters, An Introduction to Ergodic Theory, Graduate Texts in Mathematics 79, Springer, 1982.
	
	\bibitem{W2}
	Weiss, B. Intrinsically ergodic systems. Bull. Amer. Math.
	Soc. 76 1970 1266–1269.
		
	\bibitem{Z}
	A. C. Zaanen – Introduction to Operator Theory in Riesz Spaces (Springer, 1997).
	
	\bibitem{Za}
E. Alkan \& A. Zaharescu, A Survey on the Distribution of $\B$-free Numbers. Turkish Journal of Mathematics, (2006), 30 (3), 293-308.
\end{thebibliography}
\end{document}